\definecolor{Gray}{gray}{0.80}
\definecolor{LightGray}{gray}{0.90}
\definecolor{darkpastelgreen}{rgb}{0.01, 0.75, 0.24}
\newcommand{\cB}{\mathcal{B}}
\newcommand{\cD}{\mathcal{D}}
\newcommand{\cE}{\mathcal{E}}
\newcommand{\cF}{\mathcal{F}}
\newcommand{\cG}{\mathcal{G}}
\newcommand{\cH}{\mathcal{H}}
\newcommand{\cO}{\mathcal{O}}
\newcommand{\cP}{\mathcal{P}}
\newcommand{\cX}{\mathcal{X}}
\newcommand{\bN}{\mathbb{N}}
\newcommand{\bR}{\mathbb{R}}
\newcommand{\dd}{ \mathrm{d}}
\DeclareMathOperator{\grad}{grad}
\renewcommand{\epsilon}{\varepsilon}
\newcommand{\epsal}{\varepsilon_{\alpha}}
\newcommand{\epsaln}{\varepsilon_{\alpha_n}}
\newcommand{\vn}[1]{\left| \! \left| #1\right| \!\right|}
\newcommand{\ip}[2]{\langle #1,#2\rangle}
\numberwithin{equation}{section}
\newtheorem{theorem}{Theorem}[section]
\newtheorem{lemma}[theorem]{Lemma}
\newtheorem{proposition}[theorem]{Proposition}
\theoremstyle{definition}
\newtheorem{definition}[theorem]{Definition}
\newtheorem{remark}[theorem]{Remark}
\newtheorem{assumption}[theorem]{Assumption}
\newcommand{\R}{\mathbb{R}}
\newcommand{\De}{\mathrm{d}}
\newcommand{\geo}[2]{\bm{\zeta}^{{#1}\to{#2}}}
\newcommand{\geod}[2]{\dot{\bm{\zeta}}^{{#1}\to{#2}}}
\title{Hamilton--Jacobi equations for controlled gradient flows: the comparison principle}
\author{Conforti G. \thanks{CMAP, Ecole Polytechnique, Route de Saclay, 91128, Palaiseau Cedex, France. \emph{E-mail address}: giovanni.conforti@polytechnique.edu. Research supported by the ANR project  ANR-20-CE40-0014.} , Kraaij  R. C. \thanks{Delft Institute of Applied Mathematics, Delft University of Technology, Mekelweg 4, 2628 CD Delft, The Netherlands. \emph{E-mail address}: r.c.kraaij@tudelft.nl} , Tonon D.\thanks{Dipartimento di Matematica "Tullio Levi-Civita", Universit\`a degli Studi di Padova, via Trieste 63, 35121 Padova, Italy.}}
\date{\today}
\begin{document}

\maketitle

\tableofcontents

%

%
%
%
%



 \begin{abstract}

     Motivated by recent developments in the fields of large deviations for interacting particle systems and mean field control, we establish a comparison principle for the Hamilton--Jacobi equation corresponding to linearly controlled gradient flows of an energy function $\cE$ defined on a metric space $(E,d)$. 
     Our analysis is based on a systematic use of the regularizing properties of gradient flows in evolutional variational inequality (EVI) formulation, that we exploit for constructing rigorous upper and lower bounds for the formal Hamiltonian at hand and, in combination with the use of the Tataru's distance, for establishing the key estimates needed to bound the difference of the Hamiltonians in the proof of the comparison principle. Our abstract results apply to a large class of examples only partially covered by the existing theory, including gradient flows on Hilbert spaces and the Wasserstein space equipped with a displacement convex energy functional $\cE$ satisfying McCann's condition.

 \end{abstract}

\paragraph{Data availability} Data sharing not applicable to this article as no datasets were generated or analysed during the current study.

\section{Introduction}

The study of Hamilton--Jacobi (HJ) and related equations on infinite dimensional spaces is a flourishing research field. Such equations arise naturally in a great number of situations, including but certainly not limited to mean--field (or McKean--Vlasov) control problems, mean--field games and large deviation theory. This article is concerned with a specific class of infinite dimensional Hamilton--Jacobi equations having a common geometric structure that is typically encountered in the study of abstract versions of the so called Schr\"odinger problem (see {\cite{VR12, FJVR14,FN12,gentil2020dynamical,monsaingeon2020dynamical}} for some motivating examples) and in connection with large deviations theory \cite{FK06}. At the formal level, given a metric space $(E,d)$ where the metric $d$ is generated by a Riemannian metric $\ip{\cdot}{\cdot}$, the equation writes as
\begin{equation} \label{eqn:formal_HJ_eq}
    f- \lambda H f = h, \quad Hf:=-\ip{\grad f}{ \grad \cE} + \frac{1}{2} \vn{\grad f}^2
\end{equation}
where $\grad$ is the gradient associated with $\ip{\cdot}{\cdot}$. A fundamental example where equation \eqref{eqn:formal_HJ_eq} arises naturally in applications is that of the Wasserstein space $(E,d)=(\cP_2(\R^d),W_2(\cdot,\cdot))$ equipped with an energy functional $\cE$ satisfying McCann's condition. In this case, the underlying formal Riemannian metric is the so called Otto metric \cite{Ot01}. Equation \eqref{eqn:formal_HJ_eq} is expected to characterize the value function of the control problem 
\begin{equation}
    \sup\left\{ \int_0^{+\infty} e^{-\lambda^{-1} t}[\lambda^{-1} h(\rho^u(t))-\frac{1}{2}\vn{u(t)}^2\big]\De t: \dot{\rho}^{u} = -\grad \cE(\rho^{u}) + u, \,\rho^u(0)=\rho_0\right\},
\end{equation}
which can be interpreted as the problem of steering the gradient flow 
\begin{equation*}
    \dot{\rho} = -\grad \cE(\rho)
\end{equation*}
in such a way that an optimal balance is struck between the cost of controlling, modeled through the term $-\frac{1}{2}\vn{u(t)}^2$, and the reward obtained, modeled by the term $\lambda^{-1}h(\rho^u(t))$.
The above control problem can be written in the equivalent form
\begin{multline*}
    \sup\left\{ \int_0^{+\infty} \lambda^{-1} e^{-\lambda^{-1} t}\left[h(\rho^u(t))- \int_0^t \frac{1}{2}\vn{u(s)}^2 \De s \right] \De t \, \middle| \right. \\
    \left. \phantom{\int} \dot{\rho}^{u} = -\grad \cE(\rho^{u}) + u, \,\rho^u(0) = \rho_0 \right\}.
\end{multline*}
that gains a natural interpretation in relation to the corresponding semigroup. 

{

In this manuscript we prove a comparison principle for viscosity solutions of \eqref{eqn:formal_HJ_eq} that holds under mild assumptions, the most relevant one being the existence of a gradient flow for the energy functional $\cE$ in Evolutional Variational Inequality (EVI) formulation, see \eqref{item:ass_EVI} below. Since in most examples of interest one cannot make sense of $\grad \cE$ and the Riemannian metric cannot be rigorously constructed, following  \cite{Ta92,Ta94,CrLi94,Fe06,FeKa09,AmFe14,GaSw15,FeMiZi21} we argue, using \eqref{item:ass_EVI},  that the Hamilton-Jacobi equation \eqref{eqn:formal_HJ_eq} can be replaced by two equations in terms of two operators $H_\dagger$ and $H_\ddagger$ that serve as upper and lower bounds for the formal Hamiltonian in \eqref{eqn:formal_HJ_eq}. 

We then state a comparison principle in terms of the upper and lower bounds $H_\dagger$ and $H_\ddagger$ (see Definition \ref{definition:HdaggerHddagger}). Following \cite{Ta92,Ta94,CrLi94,Fe06} the test functions in the domains of $H_\dagger$ and $H_\ddagger$, contain, next to the squared metric, the non-regular Tataru distance. This distance is not easy to handle when proving the existence of viscosity solutions, nevertheless the comparison principle we state is already of large interest. A refinement of the comparison principle presented here, that will be helpful for the existence of solutions, and the existence of solution  itself will be published in subsequent articles.  We also present some meaningful examples of applications of our main result in particular to controlled gradient flows in the Wasserstein space. Further applications to controlled gradient flows in Riemannian manifolds and Hilbert spaces are also discussed.
}


\paragraph{Hamilton--Jacobi equations in infinite dimensional spaces}

The theory of viscosity solutions for Hamilton--Jacobi equations on infinite dimensional spaces was initiated by Crandall and Lions in a series of papers \cite{CrLi84,CrLi86,CL86,CrLi90,CrLi91,CrLi94} in the setting of Hilbert spaces or Banach spaces possessing the Radon-Nikodym property. Recent applications in large deviations \cite{FK06}, functional inequalities \cite{GoRoSa14}, statistical mechanics \cite{BeDSGaJLLa02,BeDSGaJLLa15}, and McKean-Vlasov control \cite{CaDe15} have motivated the development of a theory of viscosity solutions for Hamilton--Jacobi equations on metric spaces that are not necessarily Hilbert, and in particular over the space of probability measures endowed with a transport--like distance. 

A first approach to Hamilton--Jacobi equations on the space of probability measures exploits the possibility of lifting the space of probability distributions to the space of square integrable random variables in order to take advantage of the Hilbertian structure of the latter: we refer to \cite{BaCoFuPh19,PhWe18,BeGrYa20,CoGoKhPhRo21} for some results recently obtained following this method. A second approach is more intrinsic and consists of working directly at the level of the space of probability measures and developing all the relevant notions therein. One can perform this using either the linear derivative, as shown in \cite{burzoni2020viscosity} in the context of McKean-Vlasov control for jump processes, or relying on the notion of subdifferential provided by optimal transport \cite{AmGiSa08}. The connections between the intrinsic approach and the extrinsic notion of derivative obtained through the above mentioned lifting procedure have been clarified in \cite{GaTu19}. 

In this manuscript, we follow the intrinsic approach and in particular we build on the achievements of the research program carried out by Feng and his coauthors \cite{FK06,FeKa09,FeMiZi21}, who developed a technique to deal with equations whose geometric structure is the same as \eqref{eqn:formal_HJ_eq} in terms of upper and lower bounds. We combine this intrinsic approach with the use of the Tataru distance function, as a penalization function in Ekeland's variational principle. Such idea has been introduced in \cite{Ta92,Ta94} and then further refined in \cite{CrLi94,Fe06}. To the best of our knowledge, in this work we provide the first systematic implementation of Tataru's method in metric spaces that are not Hilbert: as a result, we can remove compactness assumptions on the sublevel sets of the energy $\cE$ and for metric balls. While postponing to the next paragraph a more accurate comparison of our results with the above mentioned works, we stress that several other important contributions \cite{AmFe14,gangbo2015existence,GaSw15,GaTu19,WuZh20} adopt the intrinsic approach to show well--posedness of Hamilton--Jacobi equations on metric spaces. In all these works it is assumed that the variations of the Hamiltonian w.r.t. the measure argument can be locally controlled by the metric $d$. Since we require very little from the energy functional $\cE$ beyond the existence of an EVI gradient flow, this assumption is systematically violated in most of the instances of \eqref{eqn:formal_HJ_eq} that we consider. This happens already in the basic example when $\cE$ is the relative entropy and $(E,d)$ is the Wasserstein space. It is worth mentioning that operating the formal change of variable $\tilde{f}=f-\cE$ and setting $\lambda =1$ allows to rewrite formally \eqref{eqn:formal_HJ_eq} in the form
 \begin{equation}\label{eq: change of var}
     f(\pi) - \frac 12 \vn{\grad f(\pi)}^2 +\cF(\pi) =0,  
 \end{equation}
$\cF(\pi)=\frac 1 2 \vn{\grad \cE(\pi)}^2+\cE(\pi)-h(\pi).$ This equation has been often studied in the literature on infinite dimensional Hamilton--Jacobi equations. However, our main geometrical assumption, that is formally equivalent to the semiconvexity of $\cE$, does not give the control on the growth of $\frac{1}{2}\vn{\grad \cE(\pi)}^2$ needed to successfully apply the techniques developed in the above mentioned references \cite{AmFe14,gangbo2015existence,GaSw15,GaTu19,WuZh20}.


 \paragraph{Master equation and Mean Field Games} 
 
 The recent years have witnessed fundamental advances on the understanding of the master equation arising in the theory of  Mean Field Games, see \cite{CaDeLaLi19} and the recent works \cite{WuZh20,GaMe20,GaMeMoZh21,CaCiPo20} for a sample of the recent progresses. Such equation aims at characterizing the limiting behavior of Nash equilibria in the many players regime and it has been noticed \cite{bensoussan2013mean} that the master equation shares some properties with infinite dimensional Hamilton--Jacobi equations, and in particular with those characterizing the value function of McKean--Vlasov control problems. However, these two equations remain conceptually different as explained in \cite{carmona2013control}. For example, despite some analogies between the ``monotonicity" assumption that is typically imposed on the coefficients of the master equation and the geodesic convexity of the energy functional $\cE$ that underlies all our computations, these two geometrical assumptions are not directly related and enter the coefficients of the respective equations in a different way. In the recent article \cite{GaMe20}, the authors get past the classical monotonicity assumption and indeed obtain well posedness for the master equation by means of displacement convexity. Still, the equation considered there and \eqref{eqn:formal_HJ_eq} have a different nature.

\paragraph{Contribution of this work}

 Our methods are  largely inspired by ideas and techniques put forward in \cite{Ta92,Ta94,CrLi94,FK06,FeKa09,Fe06,DFL11,KrReVe19,FeMiZi21}, where comparison principles for \eqref{eqn:formal_HJ_eq} have been proven in different contexts. Apart from \cite{DFL11,KrReVe19}, in which exploiting a Riemannian viewpoint they prove comparison principles in finite dimensional setting, we summarize here the contributions of the other papers in treating infinite--dimensional versions of \eqref{eqn:formal_HJ_eq}.
\begin{itemize}
    \item The works \cite{Ta92,Ta94,CrLi94,Fe06} deal with quadratic Hamiltonians on Hilbert spaces where the drift is not given by a gradient flow, but rather by a maximally dissipative operator $C$. (The subgradient of a proper lower semi-continuous convex functional is maximally dissipative, connecting the two equations.) We formally have
    \begin{equation} \label{eqn:Hamiltonian_Hilbert_intro}
        Hf(\pi) :=\ip{\grad f(\pi)}{ C\pi} + \frac{1}{2} \vn{\grad f(\pi)}^2
    \end{equation}
    Due to the non-compactness of the space, optimizers in the proof of the comparison principle are produced using Ekeland's variational principle. As the drift term arises from a (non-continuous) operator, the standard Hilbertian norm is not suitable to be used as a penalization function in Ekelands principle. Thus, a new metric-like object is introduced that is constructed from the norm in combination with the flow generated by $C$. A second innovation in this collection of papers concerns how to deal with $C$ in giving rigorous understanding to the Hamiltonian in \eqref{eqn:Hamiltonian_Hilbert_intro}. Working for test functions of the type $f(\pi) =  \frac{a}{2}\vn{\pi-\mu}^2$, the drift term equals
    \begin{equation} \label{eqn:intro_dissipativity_ub_1}
        \ip{\pi-\mu}{C\pi}
    \end{equation}
    which is ill-defined if $\pi$ is not in the domain of $C$. However, using the dissipativity of $C$, this term can be upper bounded by 
    \begin{equation}\label{eqn:intro_dissipativity_ub_2}
        \ip{\pi-\mu}{C\mu}
    \end{equation}
    which is well defined as long as $\mu \in \cD(C)$. This leads to a candidate upper bound for $H$, as long as one restricts to test-functions of metric type with $\mu \in \cD(C)$. A similar procedure can be carried out to obtain a lower bound. Working with test functions with restrictions on $\mu$ makes it necessary to replace the standard duplication of variables by a quadruplication, where the two new variables take their values in $\cD(C)$. The inclusion in $\cD(C)$ for these two new variables is enforced by the addition of two new penalization terms. This procedure is to some extent analogous to the procedure that, in finite dimensional cases, forces the variables to take their values in a compact set. 
    \item Building upon the works above, \cite{FeKa09} introduces a more intrinsic point of view replacing $C$ by the gradient of some energy functional $\cE$. In \cite{FeKa09} this is carried out for an energy functional $\cE$ with compact sub-levelsets on a metric space. The inclusion in the domain of the gradient of $\cE$ is now achieved by penalization with $\cE$, whereas in the papers above, considered in the context that $C = - \grad \cE$, the penalization can be interpreted as the square root of a Fisher information. This geometric interpretation effectively leads to much cleaner estimates.
    A second notable difference to the papers above is that the quadruplication is replaced by a duplication of variables. This comes at the cost of working with less-regular test functions in the domain of the Hamiltonians. To obtain existence of solutions, one typically starts out with more regular test-functions. It was shown in e.g. the three examples of Section 13 of \cite{FK06} that for well-posedness one can connect the regular and non-regular Hamiltonians by performing an inf- and sup-convolutions on sub- and supersolutions respectively. This is reminiscent of the techniques used in the proof of the comparison principle for second order equations on finite dimensional spaces, see e.g. \cite{CIL92}, and implies that the full procedure to obtain the comparison principle can be seen as one that involves a quadruplication just like in the papers mentioned above.
In the example of Section 13.3 of \cite{FK06}, studying the controlled heat flow in the Wasserstein space, it is observed that the upper and lower bound that in \eqref{eqn:intro_dissipativity_ub_1} and \eqref{eqn:intro_dissipativity_ub_2} were given by the use of the dissipativity of the operator are now replaced by the use of an inequality that we recognize in our more general context as the evolutional variational inequality.
    
    
    

    \item In \cite{FeMiZi21}, the authors study the controlled Carleman equation. In this context the Hamiltonian is associated to the gradient flow of the entropy on the space of probability measures considered as a subset of an inverse Sobolev space.  In this paper, a combination of the ideas above has been put to work, the procedure that involves a quadruplication, as above, in the sense of a standard duplication in combination with sup- and inf-convolutions, uses compactness of the sublevelsets of the energy. Also in this work, an inequality is used that we recongize as the evolutional variational inequality with contractivity constant $\kappa = 0$. 
\end{itemize}

In view of the above works, we combine their strengths  and assemble the key idea's in a single unifying framework:
\begin{itemize}
    \item We work with a geodesic metric space, where $\cE$ and $d$ do not necessarily have compact sublevel sets. In fact, we will allow $\cE$ that are unbounded from below. 
    \item We replace the variational inequalities used in the papers above by the systematic use of the evolutional variational inequality \eqref{item:ass_EVI}. This inequality is the generalization of the one used in \cite{FeMiZi21} and in a Hilbertian context implies the dissipativity of the operator $C$. Interpreting the variational inequalities used in the literature in the context of EVI, they correspond to the evolutional variational inequality with contractivity constant $\kappa = 0$. We will allow for negative $\kappa$ also. 
    \item We generalize the Tataru distance from Hilbert to general metric spaces and modify the distance to allow its application to gradient flows satisfying EVI with a negative contractivity constant $\kappa$.
    \item Instead of establishing the comparison principle via the duplication of variables combined with sup- and inf-convolutions, we perform the quadruplication of variables in a single go and introduce an argument generalizing the classical Lemma 3.1 of \cite{CIL92}.
\end{itemize}

To summarize: the key innovation in our proof strategy is the systematic use of the properties of EVI gradient flows, in particular of their regularizing properties that include energy dissipation and distance contraction estimates. Indeed, gradient flows play a crucial role in: \emph{a)} Defining suitable upper and lower bounds for the formal Hamiltonian that depend on $\cE$ and $d$ only; \emph{b)} the construction of the Tataru distance and \emph{c)} developing all the necessary estimates for the proof of the main result, in particular to bound the difference of the Hamiltonians in the proof of the comparison principle (see e.g. Lemma \ref{lemma:key_estimate_drift} and Lemma \ref{lemma:key2}). Apart from our key assumptions on the properties of the geodesic metric space $E$ and the existence of a gradient flow satisfying the evolution variational inequality, which can be considered to be standard in the literature, we assume in Assumption \ref{assumption:regularized_geodesics} that the directional derivative of the energy functional along 'regularized geodesics' can be controlled by the local slope of the energy.
Thanks to the rather soft assumptions needed for our main results to apply, we are able to cover natural situations that, to the best of our knowledge and understanding, fall out of the range of existing techniques. Leaving all precise statements to section \ref{sec: examples} below, we would like to mention  that one novelty is that we can treat the case of the Wasserstein space equipped with a R\'eny entropy as energy functional: in this setting the underlying gradient flow is the porous medium equation \cite{Ot01}. Even if we restrict to the more classical setting where the energy functional is the sum of the Boltzmann entropy, a potential energy and an interaction energy, existing results (see e.g. \cite{FK06,FeKa09}) require the confining potential to grow superquadratically at infinity in order to be applied, and several further restrictions are imposed on the interaction potential. Here, we allow for much more flexibility on both potentials. It is also plausible that the class of distances introduced in \cite{dolbeault2009new} leads to Hamilton-Jacobi equations whose uniqueness can be established by means of Theorem \ref{theorem:comparison_principle_tildeoperators} though we leave it to future work to validate this conjecture, as well as to enlarge the range of applications of the comparison principle proven in this paper.


\paragraph{Organization} The article has the following structure: in Section \ref{sec: mainres} we state our hypothesis and then proceed to the presentation of our main results. In section \ref{sec: proof of comparison} we prove Theorem \ref{theorem:comparison_principle_tildeoperators}, that is the comparison principle for the upper and lower bounds $H_{\dagger}$ and $H_{\ddagger}$.  Section \ref{sec: examples} is devoted to examples of applications, whereas Section \ref{EVI-Tataru} reports on the fundamental properties of EVI gradient flows and the Tataru distance.   Appendix \ref{appendix A} contains some background material on Ekeland's principle and Hamilton Jacobi equations.
\paragraph{Frequently used notation}
 \begin{itemize}
 \item $B_R(\rho)$ the ball of radius $R$ centered at $\rho;$
     \item $\bar{\mathbb{N}} =\mathbb{N}\cup \{+\infty\}$;
     \item $USC(E),LSC(E)$: space of upper semi-continuous and lower semi-continuous functions over $E$;
     \item $C(E)$ continuous and bounded functions over $E$;
 \end{itemize}
 
\section{The comparison principle}\label{sec: mainres}
Our aim is to establish a comparison principle for viscosity solutions of equations  of the form
\begin{equation} \label{eqn:formal_HJ_eq1}
    f(\pi) - \lambda H f (\pi) = h (\pi), \quad \pi\in E
\end{equation}
where $(E,d)$ is a complete  metric space, $\lambda>0$ is a constant, $h$ a real function on $E$ and the action of the formal Hamiltonian $H$ is given by
\begin{equation} \label{eqn:formal_H}
Hf(\pi) = -\ip{\grad_\pi f(\pi)}{ \grad_\pi \cE(\pi)} + \frac{1}{2} \vn{\grad_\pi f(\pi)}^2,
\end{equation}
where $\cE: E \to (-\infty, +\infty]$ is some energy functional and gradients are taken w.r.t. a formal Riemannian structure on $E$. Various issues arise with the definition of $H$ due to the presence of $\grad_\pi$. Indeed a precise notion of gradient for $\cE$ is difficult or impossible to give. For example, when $(E,d)$ is the Wasserstein space $(\cP_2(\R^d),W_2(\cdot,\cdot))$, in typical situations of interest, $\cE$ is worth $+\infty$ on a dense set and nowhere differentiable, even though the subdifferential is well defined and non empty on a subset of the domain of $\cE$. The lack of differentiability of entropic functionals is a well known issue in the theory of gradient flows and has led to the development of notions of gradient flows that do not appeal to $\mathrm{grad}_{\pi}\cE$ directly: we refer to \cite{AmGiSa08}  for a comprehensive overview. In a certain sense, we adopt a similar strategy: instead of working with $H$ directly, we construct suitable upper and lower bounds $H_{\dagger}$ and $H_{\ddagger}$, that depend on $\cE$ rather than its gradient and that are tight enough for the comparison principle to hold. To construct the upper and lower bounds we partially rely on ideas put forward in \cite{FeKa09,FeMiZi21} and draw inspiration from the EVI formulation of gradient flows which allows to put the considerations made therein on some important examples into a considerably more general framework. For example, an important with these work is that here we do not assume that the level sets of $\cE$ are compact. Let us now proceed to introduce the most important concepts needed to properly define $H_{\dagger}$ and $H_{\ddagger}$.



\subsection{EVI-gradient flows and statement of the main hypotheses} \label{section:general_framework}



We work on a complete metric space $(E,d)$ on which an extended functional $\cE : E\to (-\infty,+\infty]$ is defined. In the sequel, we shall refer to $\cE$ as to the energy, or entropy depending on the context. The next definition is that of local slope given in the first chapter of \cite{AmGiSa08}.
\begin{definition} {Let $\phi: E\to (-\infty,+\infty]$ be an extended functional with proper effective
domain, i.e. 
$\cD(\phi):=\{ \pi\in E: \phi(\pi)<+\infty \}\neq \emptyset$.}
Then the local slope of $\phi$ at {$\rho\in \cD(\phi)$} is defined as 
\begin{equation*}
    |\partial \phi|(\rho):= \begin{cases} \limsup_{\pi \rightarrow \rho} \frac{(\phi(\rho)-\phi(\pi))^+}{d(\rho,\pi)}, &\quad\mbox{if $\phi(\rho)<+\infty.$ }  \\
    +\infty, &\quad \mbox{otherwise.}
    \end{cases}
\end{equation*}

\end{definition}

Next, we define geodesic spaces.

\begin{definition} $(E,d)$ is a geodesic space, if for any $\rho,\pi\in E$ there exists a curve  $(\geo{\rho}{\pi}(t))_{t\in[0,1]}$ such that $\geo{\rho}{\pi}(0)=\rho,\geo{\rho}{\pi}(1)=\pi$ and for all $s,t\in[0,1]$
		\begin{equation}\label{eq: geodesicproperty}
		    d(\geo{\rho}{\pi}(s),\geo{\rho}{\pi}(t))=|t-s|d(\rho,\pi).
		\end{equation}
		Such a curve will be called \emph{geodesic}.
\end{definition}

\begin{assumption}[Metric and energy] \label{assumption:distance_and_energy}
We make the following assumptions of the complete metric space $(E,d)$ and the energy functional $\cE$.
\begin{enumerate}[(a)]
		\item   $(E,d)$ is a geodesic space. 
		\item We assume that the energy functional $\cE : E \to { (-\infty, +\infty]}$ is an extended functional such that: 
        \begin{itemize}
            \item  It has a proper effective domain, i.e. $\cD(\mathcal{E}):=\{ \pi\in E:\mathcal{E}(\pi)<+\infty \}\neq \emptyset
             $.   
            \item It is lower semi-continuous. 
        \end{itemize}
\end{enumerate}
\end{assumption}

Our second main assumption is the existence of an EVI gradient flow of $\cE$. The EVI (Evolutional Variational Inequality) formulation is the strongest formulation of gradient flows in metric spaces, we refer to the monograph \cite{AmGiSa08} and the more recent article \cite{MuSa20} for an extensive study of this notion.

\begin{definition}
    Given  $\kappa\in \R$, we define   \emph{solution of the $EV\!I_k$ inequality}   a continuous curve  $\gamma:{[}0,+\infty)\to {E}$  such that ${\gamma((0,+\infty))\subseteq \cD(\cE)}$ and 
	    for all $\rho\in E$
		 \begin{equation}\label{item:ass_EVI}\tag{$EV\!I_{\kappa}$}
		\frac{1}{2} {\frac{\dd^+}{\dd t}} \left(d^2(\gamma(t),\rho)\right) \leq \cE(\rho) - \cE(\gamma(t)) - \frac{\kappa}{2} d^2(\gamma(t),\rho),\quad \forall \rho \in \cD(\cE),t\in [0,+\infty).
		\end{equation}
		Here $\frac{\dd^+}{\dd t}$ denotes the upper right time derivative.
		
		  
		 An \emph{ $EV\!I_k$ gradient flow} of $\mathcal{E}$  defined in $D\subset \overline{\mathcal {D} (\cE)}$ is a family of continuous maps $S(t): D\to D, t\geq 0$  such that for every $\pi\in D$:
		 \begin{itemize}
	    \item The semigroup property holds 
	    \begin{equation}\label{ass:semigroup ppty and continuity} 
	    S[\pi](0)=\pi,\quad S[\pi](t+s)=S[S[\pi](t)](s) \quad \forall t,s\geq0.
	    \end{equation}
	    \item The curve $ (S[\pi](t))_{t{\geq}0}$ is a solution to \ref{item:ass_EVI}.
	    \end{itemize}
		   We shall refer to $(S[\pi](t))_{t\geq 0 }$ as  the \emph{gradient flow} of $\mathcal{E}$ started at $\pi$. 
		To lighten the notation, from now on, we will denote with $(\pi(t))_{t\geq 0}$ the gradient flow $(S[\pi](t))_{t\geq 0}$. 
\end{definition}

\begin{assumption} \label{assumption:gradientflow}
	 \textbf{[Gradient flow and  EVI]}  
	    We assume the existence of an \ref{item:ass_EVI} gradient flow of $\cE$ defined on  $D=E$. 
\end{assumption}
\begin{remark}
Note that the above assumption implies that $\overline{\mathcal {D} (\cE)}=E$.
\end{remark}

\ref{item:ass_EVI} is known to have several important consequences (see \cite{MuSa20}), including uniqueness of the gradient flow. Some of these facts, gathered at Lemma \ref{lem:EVI_properties}, play a crucial role in the proofs of our main results.
\begin{remark}
Note that the Hamiltonian is formally equivalent to 
\begin{equation} \label{eqn:lessformal_H}
Hf(\pi) =  \frac{\dd^+}{\dd t} \left( f (\pi(t)) \right)|_{t=0}+ \frac{1}{2} |\partial  f|^2(\pi).
\end{equation}
for $f:E\to (-\infty,+\infty)$ and $\pi\in E$. This representation is an important guideline for the construction of the lower and upper bounds.
\end{remark}

{For later use, we define } the information functional as the squared slope of the energy. 

\begin{definition} \label{definition:information}
    We define  the  \textit{information functional} $I: E \to [0, +\infty]$ as 
		\begin{equation*}
		   I(\pi) : = \left\{\begin{array}{cc}
		      |\partial \cE|^2(\pi)  & \pi\in \cD(\cE) \\
		       +\infty  & \text{otherwise}
		    \end{array} \right..
		\end{equation*}
\end{definition}
The information functional is closely related to the gradient flow via the energy identity
\begin{equation*}
    \cE(\pi(t))-\cE(\pi(0))=-\int_0^t I(\pi(s))\De s,
\end{equation*}
see Lemma \ref{lem:EVI_properties} for a rigorous version of the above relation.

Our final condition is of non-standard nature. We assume that any geodesic can be approximated as well as needed with a smoother curve, typically but not necessarily another geodesic,  along which the variations of $\cE$ can be controlled with the slope. This last requirement is coherent with the interpretation of the metric slope as the norm of the gradient of $\cE$. Note that, in most examples of interest, \eqref{eq: energy directional derivative} below fails to be true if we replace $\geo{\rho}{\pi}_{\theta}(t)$ with an arbitrary geodesic and that in the infinite dimensional context this assumption is considerably weaker than the existence of directional derivatives of $\cE$ along arbitrary geodesics.

\begin{assumption} \label{assumption:regularized_geodesics}
     For any $\rho,\pi\in E$ satisfying  $I(\rho)+\cE(\pi)<+\infty$, there exist a geodesic $\geo{\rho}{\pi}$ such that, for any $\theta>0$, there exists $\tau>0$ and a curve, not necessarily a geodesic, $(\geo{\rho}{\pi}_{\theta}(t))_{t\in[0,\tau]}$ , satisfying  
            \begin{equation}\label{eq: angle condition} 
            \limsup_{t \downarrow 0}  \frac{d(\geo{\rho}{\pi}_{\theta}(t),\geo{\rho}{\pi}(t))}{t} \leq \theta ,\quad 
            \end{equation} 
            and 
            \begin{equation}\label{eq: energy directional derivative}
            \liminf_{t\downarrow 0} \frac{\cE(\geo{\rho}{\pi}_{\theta}(t))-\cE(\rho)}{t}\leq |\partial \cE |(\rho)(d(\rho,\pi)+\theta).
            \end{equation}
            Note that \eqref{eq: angle condition} implies that $\geo{\rho}{\pi}_{\theta}(0)=\rho.$ 
           
\end{assumption}

We refer to \eqref{eq: angle condition} as to the \emph{angle condition}. \eqref{eq: energy directional derivative} can be interpreted as controllability of directional derivatives of regularized geodesics by the local slope.

\subsection{A first attempt at defining upper and lower bounds}

In light of the previous discussion, we can start developing a correct formulation of the Hamilton-Jacobi equation. In classical proofs of the comparison principle for first order Hamilton--Jacobi equations one needs to apply the Hamiltonian to distance--like test functions. 
In the following lines, ignoring all the technical issues, we shall derive a formal upper bound for $\pi\mapsto H d^2(\cdot,\rho)(\pi)$ arguing on the basis of \ref{item:ass_EVI} and on the following 
(formal) property of the distance
\begin{equation}  \label{eqn:ass_formal_noise_input}
		\forall \pi,\rho\in E \quad \left|\partial\left( \frac{1}{2} d^2(\cdot,\rho)\right)\right|^2(\pi) = d^2(\pi,\rho),
		\end{equation}
		where $\left|\partial \left( \frac{1}{2} d^2(\cdot,\rho)\right)\right|(\pi)$ is the slope of the function $\frac{1}{2}d^2(\cdot,\rho)$ evaluated at $\pi$. Note that the above equation holds in the case of a smooth Riemaniann manifold. Let us now consider a test function $f^\dagger : E \to \R$  that is given in terms of the squared distance as $f^\dagger(\pi) = \frac{1}{2} a d^2(\pi,\rho)$ for some $\rho\in E$ and $a>0$. Applying formally the representation of $H$ from \eqref{eqn:lessformal_H} and using the property \eqref{eqn:ass_formal_noise_input} (as if $\pi\in \cD(\cE)$), we obtain that
\begin{equation*}
Hf^\dagger(\pi) =  \frac{1}{2} a\frac{\dd^+}{\dd t} \left( d^2 (\pi(t), \rho) \right)\Big|_{t=0} + \frac{1}{2} a^2 d^2(\pi,\rho).
\end{equation*}
Then, applying (formally)  Assumption \ref{assumption:gradientflow}  and being $a > 0$, we get 
\begin{equation*}
Hf^\dagger(\pi) \leq  a\left[ \cE(\rho) - \cE(\pi) \right] -  {a\frac{\kappa}{2} d^2(\pi,\rho) } + \frac{1}{2} a^2 d^2(\pi,\rho).
\end{equation*}
Let us note that this upper bound is proper as soon as $\cE(\rho)<+\infty$, so that the right hand side is well defined, even though it may take the value $-\infty$.
 Therefore, we are led to a candidate definition for a first upper bound $H_{\text{can},\dagger}$: its domain is 
\begin{equation*}
\cD(H_{\text{can},\dagger}) := \left\{ f^\dagger : E \to \R, \ f^\dagger(\pi) = \frac{1}{2} a d^2(\pi,\rho) \, \middle| \, \forall \, a > 0, \forall \, \rho\in E: \, \cE(\rho) < \infty \right\}
\end{equation*}
and for $f^\dagger(\pi) = \frac{1}{2}a d^2(\pi,\rho)$ we define our candidate Hamiltonian via
\begin{equation*}
H_{\text{can},\dagger} f^\dagger(\pi) := a\left[ \cE(\rho) - \cE(\pi)\right] - a\frac{\kappa}{2} d^2(\pi,\rho) + \frac{1}{2} a^2 d^2(\pi,\rho).
\end{equation*}

Similarly, we get a formal lower bound for a test function $f^\ddagger : E \to \R$ defined as $f^\ddagger(\mu) = - \frac{1}{2}a d^2(\gamma,\mu)$, $\gamma \in \mathcal D(\mathcal E)$. Let 
\begin{equation*}
\cD(H_{\text{can},\ddagger}) := \left\{f^\ddagger : E \to \R, \ f^\ddagger(\mu) = - \frac{1}{2} a d^2(\gamma,\mu) \, \middle| \, \, a > 0,  \, \gamma\in E: \, \cE(\gamma) < \infty \right\}
\end{equation*}
be the corresponding domain then for $f^\ddagger(\mu) = - \frac{1}{2}a d^2(\gamma,\mu)$ we set
\begin{equation*}
H_{\text{can},\ddagger} f^\ddagger(\mu) = a\left[ \cE(\mu) - \cE(\gamma)\right] + a\frac{\kappa}{2} d^2(\gamma,\mu) + \frac{1}{2} a^2 d^2(\gamma,\mu).
\end{equation*}

Thus, instead of establishing the comparison principle for  equation \eqref{eqn:formal_HJ_eq1}, we aim to show it for the upper and lower bound we found for our Hamiltonian, i.e. we would like to show that for every subsolution  $u$ (in a sense to be precised)  of
\begin{equation*}
    f - \lambda H_{\text{can},\dagger} f = h
\end{equation*}
and every supersolution  $v$ (in a sense to be precised) of
\begin{equation*}
    f - \lambda H_{\text{can},\ddagger} f = h
\end{equation*}
we have $u \leq v$.
Thanks to the formal inequalities this result would give a formal comparison principle for equation \eqref{eqn:formal_HJ_eq}.

\smallskip

The standard procedure to prove the comparison principle consists in using a  doubling  variables method. However, when doing this with our candidate Hamiltonian, we run into the known issue that optimal values are not attained, essentially because we are working in a infinite dimensional space. This  issue is usually solved via Ekeland's variational principle (a version of which, the one used in this article, is Lemma \ref{lemma:Ekeland}, in the appendix). Nevertheless, for our setting, in which the Hamiltonian contains an unbounded term, this is not enough. Indeed, once Ekeland variational principle gives us the unique optimizer, the standard procedure consists in finding good estimates for the difference of the Hamiltonians. Following \cite{CrLi94,Ta92,Ta94,Fe06}, we need to apply the Ekeland variational principle with the Tataru distance as a penalization function which, in contrast with the usual distance $d$ is Lipschitz along the gradient flow and allows for an efficient comparison of the difference between of the Hamiltonians. Let us now proceed to construct a version of the Tataru distance that is adapted to our scope.

\subsection{The Tataru distance}

The Tataru distance function, introduced in \cite{Ta92}, is given in terms of the gradient flow generated by the energy functional $\cE$ considered therein.
\begin{equation*}
    d_T(\pi,\rho) = \inf_{t \geq 0} \left\{ t + d(\pi,\rho(t))\right\}, \quad \forall \pi,\rho\in E,
\end{equation*}
where $\rho(\cdot)$ is the gradient flow of $\cE$ started at $\rho$. Note that $d_T$ is not a metric due to a lack of symmetry. The two key properties of the above Tataru distance are that $d_T$ is Lipschitz with respect to the metric $d$ and that it behaves well with respect to the corresponding gradient flow
\begin{equation*}
    \frac{d_T(\pi(r),\rho) - d_T(\pi,\rho)}{r} \leq 1, \quad \forall \pi,\rho\in E,
\end{equation*}
for all $r\in \R\setminus \{0\}$.

These properties are both based on the fact that the gradient flow  considered there was contracting with respect to the metric. In our setting, we consider  \eqref{item:ass_EVI} gradient flows and we allow negative values $\kappa$, i.e. a negatively curved space, and in this case the gradient flow is not anymore contracting. Thus, we have to work with an adjusted Tataru distance that takes care of all possible values of $\kappa$. 


\begin{definition}
We define the Tataru distance {$d_T: E\times E \to [0,+\infty)$} with respect to the metric $d$ and energy $\cE$ as
    \begin{equation*}
    d_T(\pi,\rho) = \inf_{t \geq 0} \left\{ t + e^{\hat{\kappa} t} d(\pi,\rho(t))\right\}, \quad \forall \pi, \rho \in E,
\end{equation*}
where $\hat{\kappa} = (0 \wedge \kappa)\leq 0$.
\end{definition}

The precise statements and proofs of the main properties of Tataru distance are postponed to Section \ref{appendixTataru}.

\subsection{The comparison principle for a proper upper and lower bound}\label{bound}

Now that we have defined the Tataru distance we are ready to introduce the upper and lower bounds for $H$ for which we will actually establish the comparison principle. As we did before, we provide a heuristic argument to justify their definition. To do so, we begin by fixing a test function of the form

\begin{equation} \label{eqn:formal_computation_H_f0}
    f^\dagger(\pi) = \frac{1}{2}a d^2(\pi,\rho) + b d_T(\pi,\mu) + c
\end{equation}
for $a,b > 0$, $c \in \bR$, and $\rho,\mu\in E$. {As before, due to the presence of the  term $\frac{1}{2}a d^2(\pi,\rho)$, we will need to require that $\cE(\rho) < \infty$ in order to obtain a proper bound for the Hamiltonian.} In order to bound the action of $H$ on $f^\dagger$, we can rely again on the representation \eqref{eqn:lessformal_H} and  invoke the Lipschitzianity of $d_{T}$ along the gradient flow (Lemma \ref{lemma:estimates_Tataru}) that gives 
\begin{equation*}
  \Big| \frac{\dd^+}{\dd t} \left( d_T(\pi(t),\mu)\right)\big|_{t=0}\Big|\leq 1.
\end{equation*}
Similarly, as the Tataru distance is Lipschitz with respect to $d$, then any gradient of $d_T$ can be upper bounded by $1$. Using these two properties and applying formally \ref{item:ass_EVI} and \eqref{eqn:ass_formal_noise_input} as we did before to define $H_{\text{can},\dagger}$, we obtain that if $f^{\dagger}$ is as in \eqref{eqn:formal_computation_H_f0}:
\begin{align*}
Hf^\dagger(\pi) =&  \frac 1 2 a \frac{\dd^+}{\dd t} \left( d^2(\pi(t),\rho)\right)\big|_{t=0}  + b \frac{\dd}{\dd t} \left( d_T(\pi(t),\mu)\right)\big|_{t=0}
+\frac 12 \left |\partial\left(\frac{1}{2} a d^2(\cdot,\rho) + b  d_T(\cdot,\mu)\right)\right|^2(\pi)\\
\leq& a\left[ \cE(\rho) - \cE(\pi)\right] - a \frac{\kappa}{2} d^2(\pi,\rho) +b \\
& \qquad + \frac{1}{2} a^2 \left |\partial \left(\frac{1}{2} d^2(\cdot,\rho)\right)\right|^2(\pi) + \frac{1}{2} 2 a b \left |\partial\left(\frac{1}{2} d^2(\cdot,\rho)\right)\right|(\pi) \, \left |\partial d_T(\cdot,\mu)\right|(\pi) + \frac{1}{2}b^2\left |\partial d_T(\cdot,\mu)\right|^2(\pi) \\
\leq& a\left[ \cE(\rho) - \cE(\pi)\right] - a \frac{\kappa}{2} d^2(\pi,\rho) + b \\
& \qquad + \frac{1}{2} a^2 d^2(\pi,\rho) + ab  d(\pi,\rho)\left |\partial d_T(\cdot,\mu)\right|(\pi) + \frac{1}{2} b^2 \left |\partial d_T(\cdot,\mu)\right|^2(\pi) \\
 \leq &a\left[ \cE(\rho) - \cE(\pi)\right] - a \frac{\kappa}{2} d^2(\pi,\rho) + b  + \frac{1}{2} a^2 d^2(\pi,\rho) + ab d(\pi,\rho) 
+ \frac{1}{2} b^2.
\end{align*}

We can adapt this argument to test functions of the form 
\begin{equation*}
    f^\ddagger(\mu):= -\frac{1}{2}a d^2(\gamma,\mu) - b d_T(\mu,\pi) + c, \quad a,b>0, c\in \mathbb{R},
\end{equation*}
 by treating the term $\frac 12 \left |\partial\left(-\frac{a}{2}  d^2(\cdot,\gamma) - b  d_T(\cdot,\pi)\right)\right|^2(\mu)$ in a slightly different way, namely\footnote{In this computation we use the formal bound $|\partial (f+g)|\geq ||\partial f| -|\partial g||$. The local slope does not satisfy this property. In order to justify heuristically the upcoming calculations, it is convenient to think of it as a proxy for the norm of the gradient of $f+g$. } 
 \begin{equation*}
 \begin{split}
     \frac 12 \left |\partial\left(-\frac{a}{2}  d^2(\cdot,\gamma) - b  d_T(\cdot,\pi)\right)\right|^2(\mu)&
     \geq \frac12 \left(a\big|\partial\big(\frac{1}{2} d^2(\cdot,\gamma)\big)\big| - b  |\partial d_T(\cdot,\pi)|\right)^2(\mu)\\
     &= \frac{a^2}{2}d^2(\mu,\gamma)  -ab d(\mu,\gamma)|\partial d_T(\cdot,\pi)|(\mu)+ \frac{b^2}{2} |\partial d_T(\cdot,\pi)|^2(\mu)\\
     &\geq \frac{a^2}{2}d^2(\mu,\gamma)  -ab d(\mu,\gamma)|\partial d_T(\cdot,\pi)|(\mu)\\
     &\geq \frac{a^2}{2}d^2(\mu,\gamma)  -ab d(\mu,\gamma)
     \end{split}
 \end{equation*}
We are thus led to consider the following definition, in which we prefer to underline the fact that the Hamiltonians are operators.

\begin{definition} \label{definition:HdaggerHddagger}

\begin{enumerate}
    \item For each $a > 0, b > 0, c \in \bR$, and $\mu,\rho\in E : \, \cE(\rho) < \infty$ let $f^\dagger = f^\dagger_{a,b,c,\mu,\rho} \in C(E)$ and  $g^\dagger = g^\dagger_{a,b,c,\mu,\rho} \in USC(E)$ 
    be given for any $\pi \in E$ by
    \begin{align*}
        f^\dagger(\pi) & := \frac{1}{2}a d^2(\pi,\rho) + b d_T(\pi,\mu) + c \\
        g^\dagger(\pi) & := a\left[ \cE(\rho) - \cE(\pi)\right] - a \frac{\kappa}{2} d^2(\pi,\rho) + b  + \frac{1}{2} a^2 d^2(\pi,\rho) + ab d(\pi,\rho) 
+ \frac{1}{2} b^2.
    \end{align*}

    Then the  operator $H_\dagger \subseteq C(E) \times USC (E)$ is defined by
    \begin{equation*}
        H_\dagger := \left\{\left(f^\dagger_{a,b,c,\mu,\rho},g^\dagger_{a,b,c,\mu,\rho}\right) \, \middle| \, a , b>0,c\in \bR, \mu,\rho\in E : \, \cE(\rho) < \infty \right\}.
    \end{equation*}
    \item For each $a > 0, b > 0, c \in \bR$, and $\pi, \gamma\in E : \, \cE(\gamma) < \infty$ let $f^\ddagger = f^\ddagger_{a,b,c,\pi,\gamma} \in C(E)$ and $g^\ddagger = g^\ddagger_{a,b,c,\pi,\gamma} \in LSC(E)$  be given for any $\mu\in E$ by
    \begin{align*}
        f^\ddagger(\mu) & := -\frac{1}{2}a d^2(\gamma,\mu) - b d_T(\mu,\pi) + c \\
        g^\ddagger(\mu) & :=  a\left[ \cE(\mu) - \cE(\gamma)\right] + a \frac{\kappa}{2} d^2(\gamma,\mu) - b  + \frac{1}{2} a^2 d^2(\gamma,\mu) - ab d(\gamma,\mu){-\frac{1}{2} b^2}.
    \end{align*}

    Then the  operator $H_\ddagger \subseteq C(E) \times LSC(E)$ is defined by
    \begin{equation*}
        H_\ddagger := \left\{\left(f^\ddagger_{a,b,c,\pi,\gamma},g^\ddagger_{a,b,c,\pi,\gamma}\right) \, \middle| \, a,b>0,c\in \bR, \pi,\gamma\in E : \, \cE(\gamma) < \infty \right\}.
    \end{equation*}
\end{enumerate}

\end{definition}

We are now ready to provide the notion of solution we are going to work with, which we state for  general Hamiltonians $A_\dagger\subseteq LSC(E) \times USC (E)$ and $A_\ddagger\subseteq USC(E) \times LSC (E)$.

	\begin{definition} \label{definition:viscosity_solutions_HJ_sequences}
		 Fix $\lambda > 0$ and $h^\dagger,h^\ddagger \in C_b(E)$. Consider the equations
		\begin{align} 
		f - \lambda  A_\dagger f & = h^\dagger, \label{eqn:differential_equation_H1} \\
		f - \lambda A_\ddagger f & = h^\ddagger. \label{eqn:differential_equation_H2}
		\end{align}

		We say that $u$ is a \textit{(viscosity) subsolution} of equation \eqref{eqn:differential_equation_H1} if $u$ is bounded, upper semi-continuous and if for all $(f,g) \in A_\dagger$ there exists a sequence $(\pi_n)_{n\in \mathbb N}\in E$ such that
		\begin{gather}
		{\lim_{n \uparrow \infty} } \ u(\pi_n) - f(\pi_n)  = \sup_\pi u(\pi) - f(\pi), \label{eqn:viscsub1} \\
		{\limsup_{n \uparrow \infty}  }  \ u(\pi_n) - \lambda g(\pi_n) - h^\dagger(\pi_n) \leq 0. \label{eqn:viscsub2}
		\end{gather}
		We say that $v$ is a \textit{(viscosity) supersolution} of equation \eqref{eqn:differential_equation_H2} if $v$ is bounded, lower semi-continuous and if for all $(f,g) \in A_\ddagger$ there exists a sequence $(\pi_n)_{n\in \mathbb N}\in E$ such that
		\begin{gather*}
		{\lim_{n \uparrow \infty} } \ v(\pi_n) - f(\pi_n)  = \inf_\pi v(\pi) - f(\pi),  \\
		{\liminf_{n \uparrow \infty} } \ v(\pi_n) - \lambda g(\pi_n) - h^\ddagger(\pi_n) \geq 0.  
		\end{gather*}
		If $h^\dagger = h^\ddagger$, we say that $u$ is a \textit{(viscosity) solution} of equations \eqref{eqn:differential_equation_H1} and \eqref{eqn:differential_equation_H2} if it is both a subsolution of \eqref{eqn:differential_equation_H1} and a supersolution of \eqref{eqn:differential_equation_H2}.
		
		We say that \eqref{eqn:differential_equation_H1} and \eqref{eqn:differential_equation_H2} satisfy the \textit{comparison principle} if for every subsolution $u$ to \eqref{eqn:differential_equation_H1} and supersolution $v$ to \eqref{eqn:differential_equation_H2}, we have $\sup_E u-v \leq \sup_E h^\dagger - h^\ddagger$.
	\end{definition}

	In classical works on viscosity solutions, instead of working with the statement "there exists a sequence such that...", one has "for all optimizers one has...". 	However, when constructing our test functions in the comparison principle proof, we will work with the Ekeland variational principle, see Lemma \ref{lemma:Ekeland}. This principle will give us an optimizer that is also  unique. We will show in Lemma \ref{lemma:visc_sol_optimizers_sequence_to_point} that, for our specific test functions, we can work directly with the unique optimizer instead of passing through an optimizing sequence as if we were using the  stronger definition. On the other hand, Definition \ref{definition:viscosity_solutions_HJ_sequences} is easier to handle while showing existence of solutions.	We are ready to state the main result of this article.

\begin{theorem} \label{theorem:comparison_principle_tildeoperators}\textbf{[The comparison Principle.]} 
	Let Assumptions \ref{assumption:distance_and_energy}, \ref{assumption:gradientflow} and \ref{assumption:regularized_geodesics} be satisfied. Let $\lambda>0$ and  $h^\dagger,h^\ddagger: E\to \R$ be bounded and uniformly continuous.
	Let $u : E\to \R$ be a  viscosity subsolution to $f - \lambda H_\dagger f = h^\dagger$ and let $v: E\to \R$ be a viscosity supersolution to $f - \lambda H_\ddagger f = h^\ddagger$. Then we have 
	\begin{equation*}
	    \sup_{\pi \in E} u(\pi) - v(\pi) \leq \sup_{\pi \in E} h^\dagger(\pi) - h^\ddagger(\pi).
	\end{equation*}
	\end{theorem}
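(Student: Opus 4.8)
The plan is to run the classical doubling-of-variables argument, but in the "quadruplicated" form forced by the structure of $H_\dagger$ and $H_\ddagger$: test functions in $H_\dagger$ involve a squared distance to a point $\rho$ with $\cE(\rho)<\infty$ and a Tataru penalization to $\mu$, and symmetrically for $H_\ddagger$. So I would first reduce to the case $h^\dagger = h^\ddagger =: h$ up to the additive constant $\sup(h^\dagger - h^\ddagger)$, assume for contradiction that $\delta := \sup_E(u-v) > \sup_E(h^\dagger-h^\ddagger)$, and set up, for parameters $\alpha, \beta, \varepsilon > 0$, the penalized functional
\begin{equation*}
\Phi_{\alpha,\beta,\varepsilon}(\pi,\mu,\rho,\gamma) := u(\pi) - v(\mu) - \tfrac{\alpha}{2} d^2(\pi,\mu) - \tfrac{\beta}{2}\bigl(d^2(\pi,\rho) + d^2(\mu,\gamma)\bigr) - \varepsilon\bigl(d_T(\pi,\gamma) + d_T(\mu,\rho)\bigr) - \text{(energy penalizations in }\rho,\gamma).
\end{equation*}
The extra terms $\tfrac{\beta}{2}d^2(\pi,\rho)$, $\tfrac\beta2 d^2(\mu,\gamma)$ and an $\cE(\rho)+\cE(\gamma)$-type penalization force $\rho,\gamma$ into $\cD(\cE)$ (indeed into small balls around $\pi,\mu$ with controlled energy/information), exactly as the $\cD(C)$-penalizations do in \cite{CrLi94,Ta94}; the Tataru terms $d_T(\pi,\gamma)$, $d_T(\mu,\rho)$ are what make Ekeland's principle usable because $d_T$ is Lipschitz along the gradient flow. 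Since $E$ is not locally compact, maxima need not be attained, so I would apply Ekeland's variational principle (Lemma \ref{lemma:Ekeland}) on the complete metric space $E^4$ to obtain, after an arbitrarily small Lipschitz perturbation, a genuine strict maximizer $(\pi^*,\mu^*,\rho^*,\gamma^*)$ of the perturbed functional.

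The next block of steps is the standard a-priori analysis of the maximizer. From $\Phi(\pi^*,\mu^*,\rho^*,\gamma^*) \geq \Phi(\pi^*,\mu^*,\pi^*,\mu^*)$ and boundedness of $u,v$ one gets $\beta\, d^2(\pi^*,\rho^*) \to 0$, $\beta\, d^2(\mu^*,\gamma^*)\to 0$ as $\beta\to\infty$ (at fixed $\alpha,\varepsilon$), and $\cE(\rho^*),\cE(\gamma^*)$ stay bounded; comparing with $\Phi(\mu^*,\mu^*,\cdot,\cdot)$ and $\Phi(\pi^*,\pi^*,\cdot,\cdot)$ gives the usual $\alpha\, d^2(\pi^*,\mu^*)\to 0$ and $u(\pi^*)-v(\mu^*)\to\delta$ along the chosen subsequences; and the Tataru terms $\varepsilon(d_T(\pi^*,\gamma^*)+d_T(\mu^*,\rho^*))$ are $O(\varepsilon)$ and vanish first. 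The information functional $I(\rho^*), I(\gamma^*)$ can be kept bounded by an additional (or the same) penalization, which is what is needed to invoke Assumption \ref{assumption:regularized_geodesics}. Because $u$ is a subsolution, testing against $f^\dagger_{a,b,c,\mu^*,\rho^*}$ with the appropriate $a=\beta$, $b=\varepsilon$ (and $\rho = \rho^*$, which lies in $\cD(\cE)$ by construction, so the test function is legitimate), and because $v$ is a supersolution tested against $f^\ddagger_{a,b,c,\pi^*,\gamma^*}$, I obtain the two viscosity inequalities $u(\pi^*)\le \lambda g^\dagger(\pi^*) + h(\pi^*)$ and $v(\mu^*)\ge \lambda g^\ddagger(\mu^*) + h(\mu^*)$ (up to the $\limsup/\liminf$ slack, handled via Lemma \ref{lemma:visc_sol_optimizers_sequence_to_point} since the Ekeland optimizer is unique). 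Subtracting,
\begin{equation*}
\delta \lesssim u(\pi^*) - v(\mu^*) \le \lambda\bigl(g^\dagger(\pi^*) - g^\ddagger(\mu^*)\bigr) + h(\pi^*) - h(\mu^*) + o(1),
\end{equation*}
and uniform continuity of $h$ with $d(\pi^*,\mu^*)\to 0$ kills the $h$ difference.

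The crux is then to show $\limsup g^\dagger(\pi^*) - g^\ddagger(\mu^*) \le 0$ in the appropriate iterated limit ($\beta\to\infty$, then $\varepsilon\to 0$, then $\alpha\to\infty$), which yields $\delta \le \sup(h^\dagger-h^\ddagger)$, the contradiction. Writing out $g^\dagger - g^\ddagger$ the $\tfrac12 b^2 = \tfrac12\varepsilon^2$ and $ab\,d = \varepsilon\beta\, d(\pi^*,\rho^*)$ terms vanish (the latter because $\beta d^2(\pi^*,\rho^*)\to 0$ gives $\beta d(\pi^*,\rho^*) = \sqrt{\beta}\cdot\sqrt{\beta d^2}\to$ ... — actually this needs $\varepsilon\sqrt\beta\,\sqrt{\beta d^2(\pi^*,\rho^*)}$, controlled by first sending $\beta\to\infty$ with $\varepsilon$ fixed only if $\beta d^2 = o(\beta^{-1})$; more robustly one pairs the $ab\,d(\pi,\rho)$ term against part of the $\tfrac12 b^2$ or against the quadratic $\tfrac{a^2}{2}d^2$ term and uses the key estimates), and the genuinely delicate part is the remaining combination
\begin{equation*}
\beta\bigl[\cE(\rho^*) - \cE(\pi^*)\bigr] - \beta\bigl[\cE(\mu^*) - \cE(\gamma^*)\bigr] - \tfrac{\kappa}{2}\beta\bigl(d^2(\pi^*,\rho^*) + d^2(\gamma^*,\mu^*)\bigr) + \tfrac{\alpha^2}{2}\bigl(\text{cross terms}\bigr),
\end{equation*}
which is exactly where Lemma \ref{lemma:key_estimate_drift} and Lemma \ref{lemma:key2} enter: the first controls the energy differences $\beta(\cE(\rho^*)-\cE(\pi^*))$ and $\beta(\cE(\gamma^*)-\cE(\mu^*))$ — which are a priori $+\infty - \text{(finite)}$ — by using Assumption \ref{assumption:regularized_geodesics} to compare $\cE(\pi^*)$, $\cE(\mu^*)$ with $\cE$ along a regularized geodesic from $\rho^*$ to $\pi^*$ (resp. $\gamma^*$ to $\mu^*$) and bounding the directional derivative by $|\partial\cE|(\rho^*)\cdot d(\rho^*,\pi^*) \le \sqrt{I(\rho^*)}\cdot d(\rho^*,\pi^*)$, which is small since $\beta d^2(\pi^*,\rho^*)\to0$ and $I(\rho^*)$ is bounded. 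The second lemma packages the remaining $\alpha$-dependent quadratic terms together with the $\kappa$-corrections, using the EVI inequality \eqref{item:ass_EVI} and the Lipschitz-along-the-flow property of $d_T$, into something that is $\le o(1)$ as $\alpha\to\infty$. I expect the main obstacle to be the bookkeeping of the \emph{order of limits} — which penalization parameters are sent to their limit first, and verifying that every error term (especially the mixed $ab\, d(\pi,\rho)$ term and the $\tfrac{\alpha^2}{2}$ cross terms coming from the single-shot quadruplication rather than a duplication-plus-convolution) is absorbed in the right order — together with checking the hypotheses $I(\rho^*) + \cE(\pi^*) < \infty$ needed to even invoke Assumption \ref{assumption:regularized_geodesics} at the Ekeland point.
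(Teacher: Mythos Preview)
Your overall architecture---quadruplicate variables, apply Ekeland with a Tataru-type penalization, invoke the sub/supersolution inequalities, and close with the two key lemmas---is right, but the penalized functional you wrote down does not produce admissible test functions. When you freeze $(\mu^*,\rho^*,\gamma^*)$ in your $\Phi_{\alpha,\beta,\varepsilon}$ and vary $\pi$, you obtain
\[
\pi \mapsto \tfrac{\alpha}{2}d^2(\pi,\mu^*) + \tfrac{\beta}{2}d^2(\pi,\rho^*) + \varepsilon\, d_T(\pi,\gamma^*) + c,
\]
which carries \emph{two} squared-distance terms; but $\cD(H_\dagger)$ only contains functions $\tfrac{a}{2}d^2(\pi,\rho)+b\,d_T(\pi,\mu)+c$ with a \emph{single} base point $\rho\in\cD(\cE)$, and $\mu^*$ is not forced into $\cD(\cE)$. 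The paper sidesteps this by replacing your direct $d^2(\pi,\mu)$ link with a \emph{chain}
\[
\Psi_\alpha(x)=\tfrac{1}{2(1-\varepsilon_\alpha)}d^2(\pi,\rho)+\tfrac{1}{2}d^2(\rho,\gamma)+\tfrac{1}{2(1+\varepsilon_\alpha)}d^2(\gamma,\mu),
\]
so that varying $\pi$ alone touches only $d^2(\pi,\rho_\alpha)$ and varying $\mu$ alone touches only $d^2(\gamma_\alpha,\mu)$. Closeness of $\pi_\alpha$ and $\mu_\alpha$ is then recovered through the chain via the CIL-type statement Proposition~\ref{prop:quadruplication_Ekeland}\ref{item:CIL}. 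Moreover the Tataru terms in the paper appear \emph{only} as the Ekeland perturbation $\alpha^{-1}\cB_\alpha$, which forces $b=\alpha^{-1}$ in the resulting test function; this makes the cross terms $ab\,d(\pi_\alpha,\rho_\alpha)$ and $\tfrac12 b^2$ automatically $o(1)$ and eliminates the nested-limit bookkeeping you flag as the main obstacle.

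You also have the roles of the two key lemmas reversed, and you miss the mechanism that closes the estimate. In the paper, Lemma~\ref{lemma:key_estimate_drift} bounds the \emph{energy} (drift) terms by flowing $\rho_\alpha$ (resp.\ $\gamma_\alpha$) along the gradient flow and combining \eqref{item:ass_EVI}, the energy identity, and optimality at $x_\alpha$: this uses EVI, not Assumption~\ref{assumption:regularized_geodesics}, and produces a contribution $-\tfrac{\varepsilon_\alpha}{1-\varepsilon_\alpha}I(\rho_\alpha)-\tfrac{\varepsilon_\alpha}{1+\varepsilon_\alpha}I(\gamma_\alpha)$ (and, as a by-product, establishes $I(\rho_\alpha),I(\gamma_\alpha)<\infty$, which is what legitimizes the next step). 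Lemma~\ref{lemma:key2} then bounds the \emph{quadratic} difference $\tfrac{\alpha^2}{2(1-\varepsilon_\alpha)}d^2(\pi_\alpha,\rho_\alpha)-\tfrac{\alpha^2}{2(1+\varepsilon_\alpha)}d^2(\mu_\alpha,\gamma_\alpha)$ by perturbing $\rho_\alpha$ (towards $\pi_\alpha$) and $\gamma_\alpha$ (towards $\rho_\alpha$) along the regularized geodesics of Assumption~\ref{assumption:regularized_geodesics}, and this yields $+\tfrac{\varepsilon_\alpha}{1-\varepsilon_\alpha}I(\rho_\alpha)+\tfrac{\varepsilon_\alpha}{1+\varepsilon_\alpha}I(\gamma_\alpha)+o(1)$. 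The whole design is that these information terms \emph{cancel exactly} when the two estimates are added; without identifying this cancellation, the $I$-terms---which are not uniformly bounded in $\alpha$---cannot be disposed of.
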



\begin{remark}
Note that we formally have
$$
H f \leq H_\dagger f \quad \text{and}\quad  H_\ddagger f  \leq H f .
$$
Thanks to these inequalities the above result will give a formal comparison principle for equation \eqref{eqn:formal_HJ_eq1}.
\end{remark}

\begin{remark} \label{remark:comparison_weakening_uniformly_continuous_h}
The assumption that $h^\dagger, h^\ddagger$ are uniformly continuous can be weakened to uniform continuity on sets of the type
\begin{equation*}
    K_{c,d}^\rho := \left\{\pi \in E \, \middle| \, d(\pi,\rho) \leq c, \cE(\rho) \leq d  \right\}.
\end{equation*}
\end{remark}

\section{Proof of Theorem \ref{theorem:comparison_principle_tildeoperators} }\label{sec: proof of comparison}
The proof of Theorem \ref{theorem:comparison_principle_tildeoperators} contains two main parts.  The first part consists in showing that, in order to establish the comparison principle, we can reduce to the usual estimation on the difference of $H_\dagger$ and $H_\ddagger$. The estimation of this difference, however, is non-trivial in the present context and we postpone to section \ref{sec:key_estimates} the proof of some of the key estimates needed there.

\begin{remark}
In Step 1 of the proof below, we first make use of the fact that $\cE$ can be bounded from below by a non-negative constant times $-d^2$. In this way, the standard quadruplication of variables, which goes with a penalization needed as we work with non-equal variables, is indeed a penalization. If $\cE$ is itself already bounded from below by $0$, we can simplify significantly the proof by choosing $c_1 = 0$. 
\end{remark}

\begin{proof}
Let $u$ be a subsolution of equation \eqref{eqn:differential_equation_H1} and $v$ a supersolution of equation \eqref{eqn:differential_equation_H2}, we have to prove that $$\sup_{\pi \in E} u(\pi) - v(\pi)$$ can be controlled by $$ \sup_{\pi \in E} h^\dagger(\pi) - h^\ddagger(\pi).$$ 
To proceed, as in the classical proof of the comparison theorem, one usually performs  the  doubling variables method, that can be done in our case using the distance function and the energy functional as penalization functions. However, the use of the energy functional and the fact that $\cE(\pi)$ could be worth $+\infty$ oblige us to introduce two additional variables, i.e. we  quadruplicate the number of variables. 
This procedure is actually reminiscent of the sup-convolution procedure.

\underline{\emph{Step 1:} Quadruplication of variables and  Ekeland's principle.}

	We fix $\nu_0\in E$ such that $\cE(\nu_0) < \infty$, we need $\cE(\nu_0) < \infty$ and $c_1,c_2\in \mathbb R$ as in Lemma \ref{lem:EVI_properties} item \ref{item:EVI quadratic lower bound}, i.e. such that
    \begin{equation*}
        \inf_{\pi\in E} \cE(\pi) + \frac{c_1}{2} d^2(\pi,\nu_0) + c_2 = 0,
    \end{equation*}
    and we define
    $$
    \bar \cE(\pi):= \cE(\pi) + \frac{c_1}{2} d^2(\pi,\nu_0) + c_2.
    $$

 We fix $\alpha>0$ and $\varepsilon_{\alpha}$ small enough (this value has to be fixed according to the condition \eqref{eq:quadruplication_Ekeland_step1_1}, {i.e. $\Xi_{\alpha}(x_{\alpha,0})+\epsal < \alpha^{-1},$
	where  $x_{\alpha,0}=(\pi_{\alpha,0},\rho_{\alpha,0},\mu_{\alpha,0},\gamma_{\alpha,0})$ will be chosen later on and $\Xi_{\alpha}$ is defined as below}).

 We introduce for $x=(\pi,\rho,\mu,\gamma)\in E^4$

\begin{subequations}
    \begin{equation*}
        \Phi_{\alpha}(x) := \frac{u(\pi)}{1-\epsal}-\frac{v(\mu)}{1+\varepsilon_{\alpha}}
    \end{equation*}
    \begin{equation*}
        \Psi_{\alpha}(x):= \frac{d^2(\pi,\rho)}{2(1-\varepsilon_{\alpha})} + \frac{d^2(\rho,\gamma)}{2} +\frac{d^2(\gamma,\mu)}{2(1+\varepsilon_{\alpha})}
    \end{equation*}
    \begin{equation*}
        \Psi_{\alpha,0}(x):=\frac{1}{2(1-\epsal)}d^2(\pi,\mu)
    \end{equation*}
    \begin{equation*}
        \Xi_{\alpha}(x):= \frac{\epsal}{1-\epsal} \bar \cE(\rho)  +\frac{\epsal}{1+\epsal} \bar \cE(\gamma)
    \end{equation*}
\end{subequations}
Next, we define
\begin{subequations}
    \begin{equation}\label{eqn:Gdef}
        \cG_{\alpha}(x):=\Phi_{\alpha}(x)-\alpha \Psi_{\alpha}(x)-\Xi_{\alpha}(x), \quad M_{\alpha}:=\sup_{x\in E^4} \cG_{\alpha} (x)
    \end{equation}
    \begin{equation*}
         \cG_{\alpha,0}(x):=\Phi_{\alpha}(x)-\alpha \Psi_{\alpha,0}(x),\quad M_{\alpha,0}:=\sup_{x\in E^4} \cG_{\alpha,0}(x)
    \end{equation*}
\end{subequations}
and 
\begin{equation*} 
\begin{aligned}
    \mathcal{B}(x,\tilde x)  &  := \frac{1}{1-\epsal}d_T(\pi,\tilde \pi) +\frac{1}{1+\epsal} d_T(\mu,\tilde \mu)+ d_T(\rho,\tilde \rho)+d_T(\gamma,\tilde \gamma).
\end{aligned}
\end{equation*}

We gather the important results of this step in the following proposition, whose proof is postponed to section \ref{sec:proof_quadruplication_Ekeland}.

\begin{proposition}\label{prop:quadruplication_Ekeland}
For each $\alpha > 0$ we can find  $x_{\alpha}=(\pi_\alpha,\rho_\alpha,\mu_\alpha,\gamma_\alpha)\in E^4$ such that
\begin{enumerate}[(a)]
    \item \label{item: duplication}
    \begin{equation} \label{eqn:first_estimate_u-v}
    \sup_{\pi\in E} u(\pi) - v(\pi)   \leq \Phi_{\alpha}(x_{\alpha})+ \cO(\alpha^{-1/2}),
    \end{equation}
    \item \label{item:Ekeland_optimality} $\rho_\alpha,\gamma_\alpha \in \cD(\cE)$ and $x_\alpha$ is the unique point in $E^4$ such that
    \begin{equation} \label{eqn:Ekeland_optimality}
        \sup_{x\in E^4 } \cG_\alpha(x) - {\frac 12} \alpha^{-2} \leq \cG_\alpha(x_\alpha) = \sup_{x\in E^4 } \cG_\alpha(x) - \alpha^{-1} \cB_\alpha, 
    \end{equation}
    where 
    \begin{equation}\label{eqn:Bdef}
    \mathcal{B}_\alpha(x) := \mathcal{B}(x,x_\alpha).
    \end{equation}
    \item \label{item:Ekeland_uniqueness} If  $(x_n)_{n\in\mathbb N}\in E^4$ is such that 
    \begin{equation*}
    \lim_{n\to \infty} \cG_\alpha(x_n) - \alpha^{-1} \cB_\alpha(x_n) = \cG_\alpha(x_\alpha),
    \end{equation*}
    then $\lim_{n\to\infty} x_n = x_{\alpha}$.
    \item \label{item:CIL} We have 
    \begin{equation*}        \liminf_{\alpha \rightarrow \infty} \alpha \Psi_{\alpha}(x_{\alpha})+\Xi_{\alpha}(x_{\alpha})+ \varepsilon_{\alpha} d^2(\rho_{\alpha},\nu_0)+\varepsilon_{\alpha}d^2(\gamma_{\alpha},\nu_0)=0.
\end{equation*}
    \end{enumerate} 
 
\end{proposition}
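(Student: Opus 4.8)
All four assertions are extracted from a single application of Ekeland's variational principle (Lemma~\ref{lemma:Ekeland}) to $\cG_\alpha$ on the complete product space $E^4$, with the Tataru-type gauge $\cB(\cdot,x_\alpha)$ as the perturbation term, combined with the regularizing properties of the EVI flow recorded in Lemma~\ref{lem:EVI_properties}.

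\emph{Set-up of the variational principle and assertions~\eqref{eqn:Ekeland_optimality}, \ref{item:Ekeland_uniqueness}.} One first checks that $\cG_\alpha\colon E^4\to[-\infty,+\infty)$ is upper semi-continuous — being a sum of the USC maps $\pi\mapsto\tfrac{u(\pi)}{1-\epsal}$, $\mu\mapsto-\tfrac{v(\mu)}{1+\epsal}$ and $-\Xi_\alpha$ (the last because $\cE$, hence $\bar\cE$, is lower semi-continuous) and of the continuous $-\alpha\Psi_\alpha$ — and bounded above, since $u,v$ are bounded and $\Psi_\alpha,\Xi_\alpha\ge0$ (the latter by $\bar\cE\ge0$); evaluation at $(\nu_0,\nu_0,\nu_0,\nu_0)$ shows $\sup_{E^4}\cG_\alpha>-\infty$. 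To feed the principle with a starting point that in addition fulfils~\eqref{eq:quadruplication_Ekeland_step1_1}, I would first fix a $\tfrac14\alpha^{-2}$-near-maximizer of the $\epsal$-independent functional obtained from $\cG_\alpha$ by setting $\epsal=0$ (so that $\Xi_\alpha$ disappears), arranging its $\rho$- and $\gamma$-slots to lie in $\cD(\cE)$ by replacing them, if necessary, with short-time gradient-flow images (the flow lies in $\cD(\cE)$ for positive times and is time-continuous); only afterwards do I pick $\epsal$ small. Since $\Xi_\alpha$ evaluated at that fixed point and $\epsal$ itself both vanish as $\epsal\downarrow0$, while $\cG_\alpha$ does not exceed its $\epsal=0$ version and the two suprema differ by at most $\Xi_\alpha$ at the chosen point, the resulting $x_{\alpha,0}$ satisfies~\eqref{eq:quadruplication_Ekeland_step1_1} and is a $\tfrac12\alpha^{-2}$-near-maximizer of $\cG_\alpha$. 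Then I apply Lemma~\ref{lemma:Ekeland} to $\cG_\alpha$ with the gauge $\cB$ — which by the properties of the Tataru distance established in Section~\ref{appendixTataru} is nonnegative, vanishes exactly on the diagonal, satisfies the triangle inequality and is continuous in each argument — with parameters tuned so that the perturbation coefficient equals $\alpha^{-1}$ and the $\cB$-displacement from $x_{\alpha,0}$ is $\cO(\alpha^{-1})$; this produces $x_\alpha$ with $\cG_\alpha(x_\alpha)\ge\cG_\alpha(x_{\alpha,0})\ge M_\alpha-\tfrac12\alpha^{-2}$ such that $x\mapsto\cG_\alpha(x)-\alpha^{-1}\cB(x,x_\alpha)$ attains its supremum only at $x_\alpha$ and every maximizing sequence converges to $x_\alpha$. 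As $\cB(x_\alpha,x_\alpha)=0$, this is precisely~\eqref{eqn:Ekeland_optimality} and~\ref{item:Ekeland_uniqueness}, and $\cG_\alpha(x_\alpha)>-\infty$ forces $\Xi_\alpha(x_\alpha)<\infty$, i.e.\ $\rho_\alpha,\gamma_\alpha\in\cD(\cE)$.

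\emph{Assertion~\eqref{eqn:first_estimate_u-v}.} Since $\Psi_\alpha,\Xi_\alpha\ge0$, one has $\Phi_\alpha(x_\alpha)\ge\cG_\alpha(x_\alpha)\ge M_\alpha-\tfrac12\alpha^{-2}$, so it suffices to show $M_\alpha\ge\sup_{\pi}(u(\pi)-v(\pi))-\cO(\alpha^{-1/2})$ (the supremum being finite as $u,v$ are bounded). I would pick $\pi^\ast$ with $u(\pi^\ast)-v(\pi^\ast)\ge\sup(u-v)-\alpha^{-1/2}$ and evaluate $\cG_\alpha$ at $(\pi^\ast,\pi^\ast(t_\alpha),\pi^\ast,\pi^\ast(t_\alpha))$, choosing $t_\alpha>0$ — possible by time-continuity of the flow at $0$ — so small that $d^2(\pi^\ast,\pi^\ast(t_\alpha))\le\alpha^{-2}$; the EVI regularizing estimate of Lemma~\ref{lem:EVI_properties} bounds $\bar\cE(\pi^\ast(t_\alpha))$ by a finite number, and choosing $\epsal$ small relative to it (compatibly with the previous paragraph, as only finitely many smallness conditions are imposed) makes the values at this point satisfy $\Phi_\alpha\ge u(\pi^\ast)-v(\pi^\ast)-\cO(\alpha^{-1/2})$, $\alpha\Psi_\alpha=\tfrac{\alpha\,d^2(\pi^\ast,\pi^\ast(t_\alpha))}{1-\epsal^{2}}\le\cO(\alpha^{-1})$ and $\Xi_\alpha\le\cO(\alpha^{-1/2})$, so that $\cG_\alpha$ there is $\ge\sup(u-v)-\cO(\alpha^{-1/2})$; hence $M_\alpha\ge\sup(u-v)-\cO(\alpha^{-1/2})$.

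\emph{Assertion~\ref{item:CIL}, the main obstacle.} This is the promised generalization of Lemma~3.1 of~\cite{CIL92}, and I expect it to be the hard part, both because two vanishing parameters, $\alpha^{-1}$ and $\epsal$, have to be coordinated and because $\cE$ is only bounded below by $-\tfrac{c_1}{2}d^2(\cdot,\nu_0)-c_2$ rather than by a constant. The plan is: from~\eqref{eqn:first_estimate_u-v} combined with $\cG_\alpha(x_\alpha)\le\Phi_\alpha(x_\alpha)$ one first reads off a uniform-in-$\alpha$ bound on $\alpha\Psi_\alpha(x_\alpha)+\Xi_\alpha(x_\alpha)$; one then rewrites the quantity in~\ref{item:CIL} in terms of the coercive functional $\bar\cE(\cdot)+d^2(\cdot,\nu_0)\ge d^2(\cdot,\nu_0)\ge0$, so that the extra terms $\epsal d^2(\rho_\alpha,\nu_0)+\epsal d^2(\gamma_\alpha,\nu_0)$ are absorbed into a single energy term; and one shows this expression vanishes along a subsequence by a quantitative Crandall--Ishii--Lions squeezing argument, using comparison of $\cG_\alpha$ with the single-penalization functional $\cG_{\alpha,0}$ (through $d^2(\pi_\alpha,\mu_\alpha)\le3[d^2(\pi_\alpha,\rho_\alpha)+d^2(\rho_\alpha,\gamma_\alpha)+d^2(\gamma_\alpha,\mu_\alpha)]$) and with rescaled versions of $\cG_\alpha$, the attendant monotonicity of the suprema, and the near-optimality~\eqref{eqn:Ekeland_optimality} of $x_\alpha$. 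The two inner variables $\rho,\gamma$ — over which $\cG_\alpha$ acts as a Moreau--Yosida-type regularization of $-\bar\cE$ — are controlled once more via the EVI regularizing estimates of Lemma~\ref{lem:EVI_properties}, and the rate at which $\epsal\downarrow0$ relative to $\alpha$ is what closes the squeeze. By contrast,~\eqref{eqn:Ekeland_optimality} and~\ref{item:Ekeland_uniqueness} are a direct transcription of Lemma~\ref{lemma:Ekeland}, and~\eqref{eqn:first_estimate_u-v} is a routine regularize-and-near-optimize estimate.
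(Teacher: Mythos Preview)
Your outline is on the right track and shares the paper's architecture: Ekeland's principle for \ref{item:Ekeland_optimality} and \ref{item:Ekeland_uniqueness}, a test-point estimate for \ref{item: duplication}, and a Crandall--Ishii--Lions squeeze for \ref{item:CIL}. Two differences are worth flagging.

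For \ref{item: duplication}, your direct test point $(\pi^*,\pi^*(t_\alpha),\pi^*,\pi^*(t_\alpha))$ is indeed simpler than the paper's route. The paper instead starts (Step~1) from a near-optimizer $(\pi_{\alpha,0},\mu_{\alpha,0})$ of the \emph{doubled} functional $u(\pi)-v(\mu)-\tfrac\alpha2 d^2(\pi,\mu)$, places $\rho_{\alpha,0},\gamma_{\alpha,0}\in\cD(\cE)$ within distance $\alpha^{-1}$ of them, and then (Step~2) proves the chain $\sup(u-v)\le M_{\alpha,0}+\cO(\alpha^{-1})\le M_\alpha+\cO(\alpha^{-1/2})$ via a triangle-inequality comparison of $\Psi_{\alpha,0}$ with $\Psi_\alpha$ at $x_{\alpha,0}$ (Lemma~\ref{lemma:Jensen_on_distance}). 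The payoff of the longer route is that the intermediate relation $M_{\beta,0}\le M_\beta+\cO(\beta^{-1/2})$ is \emph{exactly} what drives the paper's proof of \ref{item:CIL}: one shows
\[
M_\alpha-\tfrac12\alpha^{-2}\;\le\;M_{\alpha/6,0}-\tfrac\alpha2\Psi_\alpha(x_\alpha)+\cO(\alpha^{-1})\;\le\;M_{\alpha/6}-\tfrac\alpha2\Psi_\alpha(x_\alpha)+\cO(\alpha^{-1/2}),
\]
the first step using $\Psi_\alpha\ge\tfrac13\Psi_{\alpha/6,0}$ from Lemma~\ref{lemma:Jensen_on_distance} and the second re-using Step~2 at scale $\alpha/6$; the $\limsup$/$\liminf$ conclusion then follows from $\limsup M_\alpha=\limsup M_{\alpha/6}$. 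Your shortcut for \ref{item: duplication} does not produce this relation, so your sketch of \ref{item:CIL} still needs to prove $M_{\beta,0}\le M_\beta+\cO(\beta^{-1/2})$ independently; you gesture at it (``comparison of $\cG_\alpha$ with $\cG_{\alpha,0}$'') but should be aware it carries that same $\alpha^{-1/2}$ work. The control of $\Xi_\alpha(x_\alpha)$ and of the $\epsal d^2$ terms is then purely algebraic: compare $\cG_\alpha(x_\alpha)\ge\cG_\alpha(x_{\alpha,0})-\tfrac12\alpha^{-2}$, feed in $\alpha\Psi_\alpha(x_\alpha)\to0$, and use the coercivity $\bar\cE\ge\tilde c_1 d^2(\cdot,\nu_0)+\tilde c_2$ from Lemma~\ref{lem:EVI_properties}\ref{item:EVI quadratic lower bound}. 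No further EVI regularizing estimates enter \ref{item:CIL}; your invocation of them there is a red herring.

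A minor caution on your set-up: the claim ``$\cG_\alpha$ does not exceed its $\epsal=0$ version'' is not literally true pointwise, because $\Psi_\alpha$ itself carries $\epsal$-dependence (the $\tfrac{1}{1+\epsal}$ factor can make $\Psi_\alpha<\Psi_\alpha|_{\epsal=0}$). This is fixable---near points where $\cG_\alpha$ is large the distances are uniformly bounded---but the paper avoids the issue by anchoring everything to the single-penalization $\Psi_{\alpha,0}$ from the start.
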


\underline{\emph{Step 2:} Use of sub(super)solution properties.} {In the rest of the proof we consider a diverging  sequence $(\alpha_n)_{n\in\mathbb{N}}$ along which

  \begin{equation*}        \lim_{n \rightarrow \infty} \alpha_n \Psi_{\alpha_n}(x_{\alpha_n})+\Xi_{\alpha_n}(x_{\alpha_n})+ \varepsilon_{\alpha_n} d^2(\rho_{\alpha_n},\nu_0)+\varepsilon_{\alpha_n}d^2(\gamma_{\alpha_n},\nu_0)=0.
\end{equation*}
}

Consider as test functions $f^\dagger,f^\ddagger: E\to (-\infty,+\infty)$ given by
\begin{align}\label{eqn:def_f0}
        f^\dagger(\cdot) :& = -(1-\epsaln)\mathcal{G}_{\alpha_n}(\cdot,\mu_{\alpha_n},\rho_
        {\alpha_n},\gamma_{\alpha_n}) +u(\cdot) + (1-\epsaln)\alpha^{-1}_n \mathcal{B}_{\alpha_n}(\cdot,\mu_{\alpha_n},\rho_{\alpha_n},\gamma_{\alpha_n}), \\
         \nonumber f^\ddagger(\cdot) :&=(1+\epsaln)\mathcal{G}_{\alpha_n}(\pi_{\alpha_n},\cdot,\rho_{\alpha_n},\gamma_{\alpha_n})+v(\cdot)-(1+\epsaln)\alpha^{-1}_n \mathcal{B}_{\alpha_n}(\pi_{\alpha_n},\cdot,\rho_{\alpha_n},\gamma_{\alpha_n}).
\end{align}
Note that $f^\dagger,f^\ddagger$ are valid test functions, 
Indeed, from \eqref{eqn:Gdef},\eqref{eqn:Bdef} we have
\begin{equation*}
    \begin{split}
        f^\dagger(\pi) &= \frac{\alpha_n}{2}d^2(\pi,\rho_{\alpha_n}) + \alpha_n^{-1} d_T(\pi,\pi_{\alpha_n})+ \text{const.} ,\\
        f^\ddagger(\mu) &= -\frac{\alpha_n}{2}d^2(\mu,\gamma_{\alpha_n})- {\alpha_n}^{-1} d_T(\mu,\mu_{\alpha_n})+\text{const.},
    \end{split}
\end{equation*}
and we know that $\rho_{\alpha_n},\gamma_{\alpha_n}\in\mathcal{D}(\mathcal{E})$ by Proposition \ref{prop:quadruplication_Ekeland}-\ref{item:Ekeland_optimality}.  From the very definition of $f^{\dagger}$, we obtain
\begin{equation}\label{eqn:def_f0b}
u(\pi)-f^\dagger(\pi)=(1-\epsaln)[\mathcal{G}_{\alpha_n}-{\alpha_n}^{-1} \mathcal{B}_{\alpha_n}](\pi,\mu_{\alpha_n},\rho_{\alpha_n},\gamma_{\alpha_n}),
\end{equation} 
and $\pi_{\alpha_n}$ is the unique maximizer of $u(\pi)-f^\dagger(\pi)$ because of \eqref{eqn:Ekeland_optimality}. 
Analogously, we find 
$$
v(\mu)-f^\ddagger(\mu)=-(1+\epsaln)[\mathcal{G}_{\alpha_n}-{\alpha_n}^{-1} \mathcal{B}_{\alpha_n}](\pi_{\alpha_n},\mu,\rho_{\alpha_n},\gamma_{\alpha_n}),
$$
and $\mu_{\alpha_n}$ is the unique minimizer of $v(\mu)-f^\ddagger(\mu)$. Being $u$ a subsolution, there exists a sequence $(\pi_m)_{m\in\mathbb{N}}\in E$ satisfying \eqref{eqn:viscsub1} and \eqref{eqn:viscsub2}, for  $(f^\dagger,g^{\dagger})\in  H_{\dagger}$, where $g^{\dagger}$ is   given by Definition \ref{definition:HdaggerHddagger} (with $a={\alpha_n}, b={\alpha_n}^{-1}$). In the next lines, we deduce from these properties that 
\begin{equation}\label{eqn:viscosity_sub_onpi0} 
    u(\pi_{\alpha_n}) \leq  \lambda g^{\dagger}(\pi_{\alpha_n}) +  h^\dagger(\pi_{\alpha_n}).
\end{equation}
We begin by observing that
\begin{equation*}
    \begin{split}
        \lim_{m \rightarrow + \infty} (1-\epsaln)[\mathcal{G}_{\alpha_n}-{\alpha_n}^{-1} \mathcal{B}_{\alpha_n}](\pi_m,\mu_{\alpha_n},\rho_{\alpha_n},\gamma_{\alpha_n})&\stackrel{\eqref{eqn:def_f0b}}{=}\lim_{m\rightarrow \infty} \left( u-f^\dagger\right)(\pi_m)\\& \stackrel{\eqref{eqn:viscsub1}}{=} \sup_{\pi\in E} \left(u-f^\dagger\right)(\pi)\\&\stackrel{\eqref{eqn:def_f0b}+ \eqref{eqn:Ekeland_optimality}}{=} (1-\epsaln)\mathcal{G}_{\alpha_n}(x_{\alpha_n})\\&=(u-f^{\dagger})(\pi_{\alpha_n}).
    \end{split}
\end{equation*}
At this point, we can use item \ref{item:Ekeland_uniqueness} of Proposition \ref{prop:quadruplication_Ekeland} which gives that $\lim_{m\rightarrow+\infty}\pi_m=\pi_{\alpha_n}$. 

{ Now Lemma \ref{lemma:visc_sol_optimizers_sequence_to_point}, says that since there exists  $\pi_{\alpha_n}\in E$ such that  {$\lim_{m\to+\infty} \pi_m = \pi_{\alpha_n}$} and
	    \begin{equation*}
	        u(\pi_{\alpha_n}) - f^\dagger(\pi_{\alpha_n}) = \sup_\pi u(\pi) - f^\dagger(\pi).
	    \end{equation*}
	    Then we have 
	    \begin{equation*}
	        u(\pi_{\alpha_n}) - \lambda g^\dagger(\pi_{\alpha_n}) - h^\dagger(\pi_{\alpha_n}) \leq 0.
	    \end{equation*}
	    Therefore} we finally establish \eqref{eqn:viscosity_sub_onpi0}. Arguing similarly, we obtain that 
\[v(\mu_{\alpha_n}) \geq  \lambda g^{\ddagger}(\mu_{\alpha_n}) +  h^\ddagger(\mu_{\alpha_n}),  \]
for  $(f^\ddagger,g^{\ddagger})\in  H_{\ddagger}$, where $g^{\ddagger}$ is   given by Definition \ref{definition:HdaggerHddagger} (with $a={\alpha_n} , b={\alpha_n}^{-1}$).

Plugging \eqref{eqn:viscosity_sub_onpi0} and this last bound into \eqref{eqn:first_estimate_u-v} and using the fact that our choice \eqref{eq:quadruplication_Ekeland_step1_2} of $\epsaln$ implies $\epsaln\leq\alpha^{-1}_n$, we arrive at 

\begin{equation}\label{eqn:Hamiltonian_difference_1}
\begin{split}
    \sup_{\pi\in E} u(\pi)-v(\pi) \leq&  \frac{h^\dagger(\pi_{\alpha_n})}{1-\epsaln}- \frac{h^\ddagger(\mu_{\alpha_n})}{1+\epsaln} + \lambda \Big( \frac{1}{1-\epsaln}g^{\dagger}(\pi_{\alpha_n}) - \frac{1}{1+\epsaln}g^{\ddagger}(\mu_{\alpha_n}) \Big)\\ & + \cO({\alpha_n}^{-1/2}).
    \end{split}
\end{equation}

\underline{\emph{Step 3}: Upper bound on the difference of the Hamiltonians.}
Applying the definition of $g^{\dagger}$ and $g^{\ddagger}$ and with the help of Proposition \ref{prop:quadruplication_Ekeland} \ref{item:CIL} we can split the difference of the Hamiltoniains into two terms and a vanishing term, namely
\begin{equation}\label{eqn:g0pi01}
\begin{split}
    &\frac{g^{\dagger}(\pi_{{\alpha_n}})}{1-\epsaln}-\frac{g^{\ddagger}(\mu_{{\alpha_n}})}{1+\epsaln} \leq \\
    &{\alpha_n} \Big[ \frac{1}{1-\epsaln}\big( \cE(\rho_{{\alpha_n}})-\cE(\pi_{{\alpha_n}}) + \frac{\kappa}{2}d^{2}(\pi_{{\alpha_n}},\rho_{{\alpha_n}}) \big)  - \frac{1}{1+\epsaln}\big( \cE(\mu_{{\alpha_n}})-\cE(\gamma_{{\alpha_n}}) - \frac{\kappa}{2}d^{2}(\pi_{{\alpha_n}},\rho_{{\alpha_n}}) \big) \Big]\\
    &+ \frac{{\alpha_n}^2}{2(1-\epsaln)}d^2(\pi_{{\alpha_n}},\rho_{{\alpha_n}}) -\frac{{\alpha_n}^2}{2(1+\epsaln)}d^2(\mu_{{\alpha_n}},\gamma_{{\alpha_n}})\\
    &+ o(1)
\end{split}
\end{equation}

{We gather here the important estimates used in this step and that will be contained in Lemma \ref{lemma:key_estimate_drift} and Lemma \ref{lemma:key2}, whose proof is postponed to section \ref{sec:key_estimates}.

Let $(\alpha_n)_{n\in\mathbb{N}}$ be the sequence given by Proposition \ref{prop:quadruplication_Ekeland}-\ref{item:CIL}, then we have
	 \begin{equation}\label{eq:key1*}
	\begin{split}
	&\alpha_n \Big[ \frac{1}{1-\epsaln}\big( \cE(\rho_{\alpha_n})-\cE(\pi_{\alpha_n}) + \frac{\kappa}{2}d^{2}(\pi_{\alpha_n},\rho_{\alpha_n}) \big)  - \frac{1}{1+\epsaln}\big( \cE(\mu_{\alpha_n})-\cE(\gamma_{\alpha_n}) - \frac{\kappa}{2}d^{2}(\pi_{\alpha_n},\rho_{\alpha_n}) \big) \Big] \\
	&\leq   -  \frac{\epsaln}{(1-\epsaln)} I(\rho_{\alpha_n}) - \frac{\epsaln}{(1+\epsaln)} I(\gamma_{\alpha_n}) + o(1).
	\end{split}
	\end{equation}
	and
	\begin{equation}\label{eq:key2*}
	   \frac{\alpha^2_n}{2(1-\epsaln)}d^2(\pi_{\alpha_n},\rho_{\alpha_n}) -\frac{\alpha^2_n}{2(1+\epsaln)}d^2(\mu_{\alpha_n},\gamma_{\alpha_n}) \leq \frac{\epsaln}{(1-\epsaln)} I(\rho_{\alpha_n})  +\frac{\epsaln}{(1+\epsaln)}   I(\gamma_{\alpha_n})+  o(1).
        \end{equation}
}

If we now apply \ref{eq:key1*} to bound the first term and \ref{eq:key2*} to bound the second term, we obtain that 
\begin{equation*}
    \frac{g^{\dagger}(\pi_{{\alpha_n}})}{1-\epsaln}-\frac{g^{\ddagger}(\mu_{{\alpha_n}})}{1+\epsaln}\leq {o}(1).
\end{equation*}
\underline{\emph{Step 4}: Conclusion.}
Let $\omega^{\dagger}$ be a modulus of continuity for $h^{\dagger}$. Combining the conclusion of \emph{Step 3} with \eqref{eqn:Hamiltonian_difference_1} we obtain that for all $n\in\mathbb{N}$
\begin{equation*}
\begin{split}
    \sup_{\pi\in E}\, u(\pi)-v(\pi) &\leq \omega^{\dagger}(d(\pi_{{\alpha_n}},\mu_{{\alpha_n}}))+ \frac{h^{\dagger}(\mu_{{\alpha_n}})}{1-\epsaln}-\frac{h^{\ddagger}(\mu_{{\alpha_n}})}{1+\epsaln} +o(1)\\&\leq \sup_{\pi\in E}\, h^{\dagger}(\pi)-h^{\ddagger}(\pi)+\omega^{\dagger}(d(\pi_{{\alpha_n}},\mu_{\alpha_n}))+o(1)
    \end{split}
\end{equation*}
where to establish the last inequality we used the boundedness of $h^{\dagger},h^{\ddagger}$ and \eqref{eq:quadruplication_Ekeland_step1_2}. The desired conclusion follows by taking limits on both sides in the above display and invoking one last time Proposition \ref{prop:quadruplication_Ekeland}\ref{item:CIL}
 Note that item \ref{item:CIL} of Proposition \ref{prop:quadruplication_Ekeland} also implies Remark \ref{remark:comparison_weakening_uniformly_continuous_h}.

\end{proof}

\subsection{Proof of proposition \ref{prop:quadruplication_Ekeland}}\label{sec:proof_quadruplication_Ekeland}
\begin{proof}
\begin{itemize}
    \item \emph{\underline{Step 1: quadruplication of variables}}
            We first pick $(\pi_{\alpha,0},\mu_{{\alpha},0})\in E^2$ such that 
            \begin{equation}\label{eq:quadruplication_Ekeland_step1_3}
                \sup_{\pi\in E} u(\pi)-v(\pi) \leq u(\pi_{\alpha,0})-v(\mu_{\alpha,0})- \frac{\alpha}{2} d^2(\pi_{\alpha,0},\mu_{\alpha,0})+\alpha^{-1}.
            \end{equation}        
            Next, we choose $(\rho_{\alpha,0},\gamma_{\alpha,0})\in E^2$ such that 
    \begin{equation}\label{eq:quadruplication_Ekeland_step1_2}
         \cE(\rho_{\alpha,0})+ \cE(\gamma_{\alpha,0})<+\infty, \quad  d(\pi_{\alpha,0},\rho_{\alpha,0})+d(\gamma_{\alpha,0},\mu_{\alpha,0})<\alpha^{-1},
    \end{equation}
    and $\epsal\in (0,1/3)$ such that
        \begin{equation}\label{eq:quadruplication_Ekeland_step1_1}
			\Xi_{\alpha}(x_{\alpha,0})+\epsal < \alpha^{-1},
		\end{equation}
	where  $x_{\alpha,0}=(\pi_{\alpha,0},\rho_{\alpha,0},\mu_{\alpha,0},\gamma_{\alpha,0})$.	
            
    \item \emph{\underline{Step 2:  algebraic bounds on the difference of solutions }} 
    In this step we show that 
        \begin{equation}\label{eq:quadruplication_Ekeland_step2_1}
        \sup_{\pi\in E} u(\pi)-v(\pi) \leq M_{\alpha,0}+\cO(\alpha^{-1}) \leq M_{\alpha} + \cO(\alpha^{-1/2}).
    \end{equation}
    We do so by first showing that 
    \begin{equation}\label{eq:quadruplication_Ekeland_step2_2}
    \begin{split}
                \sup_{\pi\in E} u(\pi)-v(\pi)&\leq \sup_{x\in E^4}\Phi_{\alpha}(x)- \alpha \Psi_{\alpha,0}(x)+ \cO(\alpha^{-1})\\&\leq \Phi_{\alpha}(x_{\alpha,0})- \alpha \Psi_{\alpha,0}(x_{\alpha,0})+ \cO(\alpha^{-1})
                \end{split}
    \end{equation}    
 and eventually establishing that 
    \begin{equation}\label{eq:quadruplication_Ekeland_step2_3}
        \alpha\Psi_{\alpha,0}(x_{\alpha,0}) \geq \alpha\Psi_{\alpha}(x_{\alpha,0})  + \cO(\alpha^{-1/2}).
    \end{equation}
    Once these two bounds have been proven, the desired conclusion \eqref{eq:quadruplication_Ekeland_step2_1} follows at once  using \eqref{eq:quadruplication_Ekeland_step1_1}.

    Let us now proceed to the proof of \eqref{eq:quadruplication_Ekeland_step2_2}. From the boundedness of $u,v$ and using the bounds
    \begin{equation}\label{eq:quadruplication_Ekeland_step2_4}
        \Big|\frac{1}{1-\epsal}-1\Big|+  \Big|\frac{1}{1+\epsal}-1\Big| = \cO(\epsal)=\cO(\alpha^{-1})
    \end{equation}
    we get
    \begin{equation}\label{eq:quadruplication_Ekeland_step2_5}
        \sup_{\pi\in E} u(\pi)-v(\pi) \leq \Phi_{\alpha}(x_{\alpha,0})- \frac{\alpha}{2}d^2(\pi_{\alpha,0},\mu_{\alpha,0})+\cO(\alpha^{-1})
    \end{equation}
    From the choice of $(\pi_{\alpha,0},\mu_{\alpha,0})$ (see \eqref{eq:quadruplication_Ekeland_step1_3}) we deduce that 
    \begin{equation}\label{eq:quadruplication_Ekeland_step2_6}
        \frac{\alpha}{2} d^2(\pi_{\alpha,0},\mu_{\alpha,0})\leq \sup_{\pi}|u|(\pi) +\sup_{\pi}|v|(\pi) + \sup_{\pi}|u-v|(\pi)+ \alpha^{-1}=\cO(1)
    \end{equation}
    But then, using this last bound and \eqref{eq:quadruplication_Ekeland_step2_4} in \eqref{eq:quadruplication_Ekeland_step2_5} we obtain 
    \begin{equation*}
                \sup_{\pi\in E} u(\pi)-v(\pi)\leq \Phi_{\alpha}(x_{\alpha,0})- \alpha \Psi_{\alpha,0}(x_{\alpha,0})+ \cO(\alpha^{-1}),
    \end{equation*}  
    which proves the first inequality of 
     \eqref{eq:quadruplication_Ekeland_step2_2}. 
    To prove the second one, i.e.  
    \begin{equation*}
                \sup_{x\in E^4}\Phi_{\alpha}(x)- \alpha \Psi_{\alpha,0}(x)+ \cO(\alpha^{-1})\leq \Phi_{\alpha}(x_{\alpha,0})- \alpha \Psi_{\alpha,0}(x_{\alpha,0})+ \cO(\alpha^{-1}),
    \end{equation*}
    we proceed as before using the boundedness of $u,v$,  \eqref{eq:quadruplication_Ekeland_step2_4} and \eqref{eq:quadruplication_Ekeland_step2_6}, to show that 
    $$
    \sup_{x\in E^4}\Phi_{\alpha}(x)- \alpha \Psi_{\alpha,0}(x)+ \cO(\alpha^{-1}) \leq  \sup_{\pi, \mu\in E} u(\pi)-v(\mu)- \frac{\alpha}{2}d^2(\pi,\mu) + \cO(\alpha^{-1}).
    $$
    By the choice of $(\pi_{\alpha,0},\mu_{\alpha,0})$ (see \eqref{eq:quadruplication_Ekeland_step1_3}) we obtain
    $$
    \sup_{x\in E^4}\Phi_{\alpha}(x)- \alpha \Psi_{\alpha,0}(x)+ \cO(\alpha^{-1}) \leq   u(\pi_{\alpha,0})-v(\mu_{\alpha,0})- \frac{\alpha}{2}d^2(\pi_{\alpha,0},\mu_{\alpha,0}) + \cO(\alpha^{-1}), 
    $$ and, through analogous computations,  the second inequality of \eqref{eq:quadruplication_Ekeland_step2_2}.

    In order to prove \eqref{eq:quadruplication_Ekeland_step2_3}
    we begin observing that the triangular inequality give
    \begin{equation}\label{eq:quadruplication_Ekeland_step2_7}
        d(\pi_{\alpha,0},\mu_{\alpha,0}) \geq d(\rho_{\alpha,0},\gamma_{\alpha,0})-d(\pi_{\alpha,0},\rho_{\alpha,0})-d(\gamma_{\alpha,0},\mu_{\alpha,0}).
    \end{equation}
    There are two possible cases:
        \begin{itemize}
            \item $d(\rho_{\alpha,0},\gamma_{\alpha,0})<d(\pi_{\alpha,0},\rho_{\alpha,0})+d(\gamma_{\alpha,0},\mu_{\alpha,0})$. In this case, we immediately obtain from our choice of $\rho_{\alpha,0}$ and $\gamma_{\alpha,0}$ that $d(\rho_{\alpha,0},\gamma_{\alpha,0})=\cO(\alpha^{-1})$ from which we deduce that \begin{equation*}\label{eq:quadruplication_Ekeland_step2_8}
                d^2(\pi_{\alpha,0},\mu_{\alpha,0}) =  d^2(\pi_{\alpha,0},\rho_{\alpha,0})+  d^2(\rho_{\alpha,0},\gamma_{\alpha,0})+  d^2(\gamma_{\alpha,0},\mu_{\alpha,0})+\cO(\alpha^{-2}).
            \end{equation*}
            \item $d(\rho_{\alpha,0},\gamma_{\alpha,0}) \geq d(\pi_{\alpha,0},\rho_{\alpha,0})+d(\gamma_{\alpha,0},\mu_{\alpha,0}).$ In this case taking squares in \eqref{eq:quadruplication_Ekeland_step2_7} and using \eqref{eq:quadruplication_Ekeland_step1_2} we get
            \begin{equation*}
            \begin{split}
     d^2(\pi_{\alpha,0},\mu_{\alpha,0}) =&  d^2(\pi_{\alpha,0},\rho_{\alpha,0})+ d^2(\rho_{\alpha,0},\gamma_{\alpha,0})+ d^2(\gamma_{\alpha,0},\mu_{\alpha,0})\\ &+ d(\rho_{\alpha,0},\gamma_{\alpha,0})\cO(\alpha^{-1}) +\cO(\alpha^{-2}).
     \end{split}
            \end{equation*}
            An application of the triangular inequality \eqref{eq:quadruplication_Ekeland_step2_7} in combination with \eqref{eq:quadruplication_Ekeland_step1_2} and \eqref{eq:quadruplication_Ekeland_step2_6} gives that $d(\rho_{\alpha,0},\gamma_{\alpha,0})=\cO(\alpha^{-1/2})$. Plugging this into the above display yields
            \begin{equation}\label{eq:quadruplication_Ekeland_step2_9}
              d^2(\pi_{\alpha,0},\mu_{\alpha,0}) =  d^2(\pi_{\alpha,0},\rho_{\alpha,0})+ d^2(\rho_{\alpha,0},\gamma_{\alpha,0})+ d^2(\gamma_{\alpha,0},\mu_{\alpha,0})+\cO(\alpha^{-3/2}).
            \end{equation}
            \end{itemize}
        Therefore in both cases we have that \eqref{eq:quadruplication_Ekeland_step2_9} holds. Multiplying this relation on both sides by $\frac{\alpha}{2(1-\epsal)}$ and using the basic inequality $\frac{\alpha}{2(1-\epsal)}\leq \frac{\alpha}{2}\leq \frac{\alpha}{2(1+\epsal)}$
        establishes \eqref{eq:quadruplication_Ekeland_step2_3}.
    \item \emph{\underline{Step 3: Ekeland's principle and proof of item \ref{item: duplication},\ref{item:Ekeland_optimality} and \ref{item:Ekeland_uniqueness}}}
     The verification that $\cG_{\alpha}$ and $\cB$ satisfy the hypothesis of Ekeland's Lemma (Lemma \ref{lemma:Ekeland}) is done separately in Lemma \ref{lemma: tataru good for ekeland} in the Appendix. Next, we pick $\hat{x}=(\hat \pi,\hat\mu,\hat\rho,\hat\gamma)\in E^2\times {\mathcal D(\mathcal{E})}^2 $ such that 
    \begin{equation} \label{eqn:delta_optimizers} 
			\sup_{x\in E^4} \mathcal{G}_\alpha(x) -\frac{1}{2}\alpha^{-2} \leq \cG_\alpha(\hat x).
	\end{equation}
    If we now apply Lemma \ref{lemma:Ekeland} setting $\delta=\alpha^{-1}$ we immediately obtain the equality statement in  \eqref{eqn:Ekeland_optimality} thanks to item \ref{item:Ekeland2}-\ref{lemma:Ekeland}. I.e., for each $\alpha > 0$ we can find a unique $x_{\alpha}=(\pi_\alpha,\rho_\alpha,\mu_\alpha,\gamma_\alpha)\in E^2\times\mathcal D(\cE ^2)$  that attains the maximum in  $\sup_{E^4} \cG_\alpha(\cdot) - \alpha^{-1} \cB_\alpha(\cdot)$. Moreover, using item { \ref{item:Ekeland1}-\ref{lemma:Ekeland} in combination with   \eqref{eqn:delta_optimizers}} we prove the inequality statement in \eqref{eqn:Ekeland_optimality}. This concludes the proof of item \ref{item:Ekeland_optimality}. At this point, item \ref{item: duplication} is a direct consequence of equations \eqref{eqn:Ekeland_optimality}, that we have just proven, \eqref{eq:quadruplication_Ekeland_step2_1}, and the fact that $\Xi_{\alpha},\Psi_{\alpha}$ non-negative functions. Item \ref{item:Ekeland_uniqueness} also follows from item \ref{item:Ekeland_unique_optimizer_convergence}-\ref{lemma:Ekeland}.
    \item \emph{\underline{Step 4: Proof of item \ref{item:CIL}}}. 
    We have from item \ref{item:Ekeland_optimality}
    \begin{equation}\label{eq:quadruplication_Ekeland_step4_1}
            M_{\alpha}-\frac12\alpha^{-2} \leq \mathcal G_{\alpha}(x_{\alpha})
            \stackrel{\Xi_{\alpha}\geq 0}{\leq} [\Phi_{\alpha}-\alpha \Psi_{\alpha}](x_\alpha).
    \end{equation}
    Next, we observe that our choice of $\epsal$ and the boundedness of $u,v$ imply
    \begin{equation*}
        \Phi_{\alpha}(x_{\alpha})=\Phi_{\alpha/6}(x_{\alpha})+ \cO(\alpha^{-1}).
    \end{equation*}
    Moreover, using the version of Jensen's inequality \eqref{eq:Jensen_1}, proven separately in Lemma \ref{lemma:Jensen_on_distance}, with the choices $\varepsilon=\epsal,\varepsilon'=\varepsilon_{\alpha/6}$ we obtain
    \begin{equation*}
        \Psi_{\alpha}(x_{\alpha}) \geq \frac{1}{3}\Psi_{\frac{\alpha}{6},0}(x_{\alpha}).
    \end{equation*}
    But then, the right hand side in \eqref{eq:quadruplication_Ekeland_step4_1} is bounded above by
    \begin{equation*}
    \begin{split}
       M_{\alpha}-\frac12\alpha^{-2}&\leq  [\Phi_{\alpha/6}-\frac{\alpha}{6}\Psi_{\alpha/6,0}](x_{\alpha})-\frac{\alpha}{2}\Psi_{\alpha}(x_{\alpha})+\cO(\alpha^{-1}) \\
        &\leq M_{\alpha/6,0} - \frac{\alpha}{2} \Psi_\alpha(x_\alpha) + \cO(\alpha^{-1})\\
        &\stackrel{\eqref{eq:quadruplication_Ekeland_step2_1}}{\leq} M_{\alpha/6} -\frac{\alpha}{2}\Psi_{\alpha}(x_{\alpha})+\cO(\alpha^{-1/2}).
        \end{split}
    \end{equation*}
        We have thus obtained
    \begin{equation*}
        M_{\alpha}-\alpha^{-2} \leq M_{\alpha/6} -\frac{\alpha}{2}\Psi_{\alpha}(x_{\alpha})+\cO(\alpha^{-1/2}).
    \end{equation*}
    Taking $\limsup$ on both sides we get
    \begin{equation*}
        \limsup_{\alpha\to \infty} M_{\alpha} \leq \limsup_{\alpha\to \infty} M_{\alpha/6} -\frac{\alpha}{2}\Psi_{\alpha}(x_{\alpha}) \leq \limsup_{\alpha\to \infty} M_{\alpha/6} - \frac{1}{2}\liminf_{\alpha\to \infty} \alpha\Psi_{\alpha}(x_{\alpha}),
    \end{equation*}
    whence the existence of a sequence $(\alpha_n)_{n\in\mathbb N}$ such that
    \begin{equation}\label{eq:CIL}
        \lim_{n\to +\infty} \alpha_n = + \infty, \quad \lim_{n\to+ \infty}\alpha_n\Psi_{\alpha_n}(x_{\alpha_n})=0.
    \end{equation}
        To conclude the proof, we observe that thanks to item \ref{item:Ekeland_optimality} we have
        \begin{equation*}
            \mathcal{G}_{\alpha}(x_{\alpha})\geq \mathcal{G}_{\alpha}(x_{\alpha,0})-\frac12 {\alpha}^{-2},
        \end{equation*}
        whence, with the help of \eqref{eq:quadruplication_Ekeland_step1_1}
        \begin{equation*}
            \Xi_{\alpha}(x_{\alpha}) \leq [\Phi_{\alpha}-\alpha\Psi_{\alpha}](x_{\alpha})- [\Phi_{\alpha}-\alpha\Psi_{\alpha}](x_{\alpha,0})+\cO(\alpha^{{\-1/2}}).
        \end{equation*}
        Using \eqref{eq:quadruplication_Ekeland_step2_3} on $\alpha \Psi_\alpha(x_{\alpha,0})$ and Lemma \ref{lemma:Jensen_on_distance} to obtain $-\alpha \Psi_\alpha(x_\alpha) \leq - \alpha \Psi_{\alpha,0}(x_\alpha) + {5\alpha} \Psi_{\alpha}(x_\alpha)$, we obtain
        \begin{equation*}
            \Xi_{\alpha}(x_{\alpha}) \leq \cG_{\alpha,0}(x_\alpha) +{5\alpha} \Psi_\alpha(x_\alpha) - \cG_{\alpha,0}(x_{\alpha,0}) +\cO(\alpha^{-{1/2}}).
        \end{equation*}
        Since $ \mathcal{G}_{\alpha,0}(x_{\alpha})\leq M_{\alpha,0}$ and  $\mathcal{G}_{\alpha,0} (x_{\alpha,0})=M_{\alpha,0}+\cO(\alpha^{-1})$ by \eqref{eq:quadruplication_Ekeland_step2_2}, we find
        \begin{equation*}
            \Xi_{\alpha}(x_{\alpha}) \leq  {5\alpha} \Psi_\alpha(x_\alpha)  + \cO(\alpha^{-{1/2}}).
        \end{equation*}
        As a consequence of \eqref{eq:CIL}, if we choose the same sequence $(\alpha_n)_{n\in\mathbb N}$ giving \eqref{eq:CIL}  we have
        \begin{equation}\label{eq:CIL2} 
        \lim_{n\to+ \infty}\Xi_{\alpha_n}(x_{\alpha_n})=0.
    \end{equation}
        
         Finally, observing that by construction
         { $$
    \bar \cE(\pi):= \cE(\pi) + \frac{c_1}{2} d^2(\pi,\nu_0) + c_2.
    $$ for all $\pi\in E$ and some ${c}_1>0,{c}_2\in\R$ this implies by
         Lemma \ref{lem:EVI_properties} \ref{item:EVI quadratic lower bound}  that}
        \begin{equation*}
            \bar{\cE}(\pi) \geq \tilde{c}_1 d^2(\pi,\nu_0)+ \tilde{c}_2
        \end{equation*}
        for all $\pi\in E$ and some $\tilde{c}_1>0,\tilde{c}_2\in\R$,
        we deduce from \eqref{eq:CIL2} that
        \begin{equation*} 
        \lim_{n\to+ \infty} \varepsilon_{\alpha_n} d^2(\rho_{\alpha_n},\nu_0)+\varepsilon_{\alpha_n}d^2(\gamma_{\alpha_n},\nu_0)=0.
    \end{equation*}
        
\end{itemize}
\end{proof}

\subsection{Key estimates}\label{sec:key_estimates}

We now prove the two main estimates we used in the proof of the comparison principle. In the next lemma, we find an upper bound for the first term on the right-hand side in  \eqref{eqn:g0pi01} relying essentially on \ref{item:ass_EVI}. it is precisely here where the use of $d$ instead of $d_T$ in Ekeland's lemma results in weaker estimates that do not allow to conclude the proof of the comparison principle. In Lemma \ref{lemma:key2}, we find an upper bound for the second term on the right-hand side in  \eqref{eqn:g0pi01}, relying on the curves introduced in Assumption \ref{assumption:regularized_geodesics}.

The proofs of these lemmas are partially inspired by Lemma 2.5 and 2.6 of \cite{FeMiZi21}. {In both statements, we use the information functional $I = |\partial \cE|^2$ which was introduced in Definition \ref{definition:information}.}

\begin{lemma}[Estimate on drift from EVI and gradient flow]  \label{lemma:key_estimate_drift} For fixed $\alpha>0$ let $x_\alpha =(\pi_\alpha,\mu_\alpha,\rho_\alpha,\gamma_\alpha)$ and $\nu_0$ be as in the proof of Theorem \ref{theorem:comparison_principle_tildeoperators}. Then, we have that $\mathcal{E}(\pi_\alpha)+\mathcal{E}(\mu_\alpha)<+\infty$ and the following estimates hold
 \begin{equation}\label{eqn:EVI_estimate_1}
	\begin{split}
	&\alpha \left[\cE(\rho_\alpha) - \cE(\pi_\alpha)\right] + \frac{\alpha\kappa}{2} d^2(\rho_\alpha,\pi_\alpha) \\
	&\leq (1-\epsal) \alpha\left[ \cE(\gamma_\alpha) - \cE(\rho_\alpha)\right] - (1-\epsal) \alpha \frac{\kappa}{2} d^2(\rho_\alpha,\gamma_\alpha) -  {\epsal} I(\rho_\alpha) + (1-\epsal)\alpha^{-1}\\
	&\quad {+\epsal c_1 [\mathcal{E}(\nu_0)-\mathcal{E}(\rho_\alpha)] -\epsal c_1\frac{\kappa}{2}d^2(\rho_\alpha,\nu_0)};
	\end{split}
\end{equation}
	 \begin{equation}\label{eqn:EVI_estimate_2}
	\begin{split}
    &\alpha \left[\cE(\mu_\alpha) - \cE(\gamma_\alpha)\right] - \frac{\alpha \kappa}{2} d^2(\gamma_\alpha,\mu_\alpha) \\
	&\geq (1+\epsal) \alpha \left[ \cE(\gamma_\alpha) - \cE(\rho_\alpha)\right] + {(1+\epsal)}\alpha \frac{\kappa}{2} d^2(\rho_\alpha,\gamma_\alpha) +  {\epsal} I(\gamma_\alpha) - (1+\epsal)\alpha^{-1}\\
	&\quad {+\epsal c_1 [\mathcal{E}(\gamma_\alpha)-\mathcal{E}(\nu_0)] +\epsal c_1\frac{\kappa}{2}d^2(\gamma_\alpha,\nu_0)}.
	\end{split}
	\end{equation}
Moreover,  $I(\rho_\alpha) + I(\gamma_\alpha) < \infty$. 
	
	 As a corollary,  if $(\alpha_n)_{n\in\mathbb{N}}$ is the sequence given by Proposition \ref{prop:quadruplication_Ekeland}-\ref{item:CIL}, then we have
	 \begin{equation}\label{eq:key1}
	\begin{split}
	&\alpha_n \Big[ \frac{1}{1-\epsaln}\big( \cE(\rho_{\alpha_n})-\cE(\pi_{\alpha_n}) + \frac{\kappa}{2}d^{2}(\pi_{\alpha_n},\rho_{\alpha_n}) \big)  - \frac{1}{1+\epsaln}\big( \cE(\mu_{\alpha_n})-\cE(\gamma_{\alpha_n}) - \frac{\kappa}{2}d^{2}(\pi_{\alpha_n},\rho_{\alpha_n}) \big) \Big] \\
	&\leq   -  \frac{\epsaln}{(1-\epsaln)} I(\rho_{\alpha_n}) - \frac{\epsaln}{(1+\epsaln)} I(\gamma_{\alpha_n}) + o(1).
	\end{split}
	\end{equation}
\end{lemma}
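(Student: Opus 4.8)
The key to both estimates \eqref{eqn:EVI_estimate_1} and \eqref{eqn:EVI_estimate_2} is to exploit the optimality of $x_\alpha$ for the functional $\cG_\alpha - \alpha^{-1}\cB_\alpha$ (Proposition \ref{prop:quadruplication_Ekeland}\ref{item:Ekeland_optimality}) by perturbing only \emph{one} of the four variables along the EVI gradient flow. The plan for \eqref{eqn:EVI_estimate_1} is to freeze $\pi_\alpha,\mu_\alpha,\gamma_\alpha$ and move $\rho_\alpha$ to $\rho_\alpha(t)$, the gradient flow of $\cE$ started at $\rho_\alpha$; optimality gives
\[
\cG_\alpha(\pi_\alpha,\rho_\alpha(t),\mu_\alpha,\gamma_\alpha) - \alpha^{-1}\cB_\alpha(\pi_\alpha,\rho_\alpha(t),\mu_\alpha,\gamma_\alpha) \leq \cG_\alpha(x_\alpha),
\]
so that, after cancelling the common $\Phi_\alpha$ and $\Psi_{\alpha,0}$ pieces, dividing by $t$ and letting $t\downarrow 0$, the $\Psi_\alpha$ terms $\tfrac{d^2(\pi_\alpha,\rho_\alpha(t))}{2(1-\epsal)}+\tfrac{d^2(\rho_\alpha(t),\gamma_\alpha)}{2}$ are controlled from above by \ref{item:ass_EVI} (applied with the base points $\pi_\alpha$ and $\gamma_\alpha$), the Tataru term $d_T(\rho_\alpha,\rho_\alpha(t))$ contributes at most $\alpha^{-1}$ by the Lipschitz-along-the-flow property (Lemma \ref{lemma:estimates_Tataru}), and the $\Xi_\alpha$ term $\tfrac{\epsal}{1-\epsal}\bar\cE(\rho_\alpha(t))$ produces, via the energy identity of Lemma \ref{lem:EVI_properties}, exactly the $-\epsal I(\rho_\alpha)$ term (together with the $\epsal c_1$ correction coming from the $\tfrac{c_1}{2}d^2(\cdot,\nu_0)+c_2$ in $\bar\cE$, handled again by \ref{item:ass_EVI} with base point $\nu_0$). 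Symmetrically, for \eqref{eqn:EVI_estimate_2} I would freeze $\pi_\alpha,\rho_\alpha,\mu_\alpha$ and move $\gamma_\alpha$ along its gradient flow, using the \emph{minimizing} character of $\mu_\alpha$ through the $-v$ sign conventions — here EVI is applied with base points $\rho_\alpha$ and $\mu_\alpha$, and one reads off the lower bound with $+\epsal I(\gamma_\alpha)$.

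\textbf{Finiteness of the energies and slopes.}
Before differentiating I need $\cE(\pi_\alpha),\cE(\mu_\alpha)<+\infty$, which is where \ref{item:ass_EVI} pays off again: since $\rho_\alpha\in\cD(\cE)$, the EVI inequality with $\gamma=\rho_\alpha(\cdot)$ and test point $\pi_\alpha$ forces $\cE(\pi_\alpha)<+\infty$ as soon as $\tfrac12\tfrac{\dd^+}{\dd t}d^2(\rho_\alpha(t),\pi_\alpha)$ is finite, which it is along an EVI flow; likewise for $\mu_\alpha$ via $\gamma_\alpha$. Then the finiteness $I(\rho_\alpha)+I(\gamma_\alpha)<\infty$ follows because the right-hand sides of \eqref{eqn:EVI_estimate_1}--\eqref{eqn:EVI_estimate_2}, after we have shown the inequalities, are finite (all distances are finite, $\cE(\rho_\alpha),\cE(\gamma_\alpha),\cE(\pi_\alpha),\cE(\mu_\alpha),\cE(\nu_0)$ are finite), so $I(\rho_\alpha)$ and $I(\gamma_\alpha)$ cannot be $+\infty$ without contradicting the already established inequalities.

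\textbf{Deriving the corollary \eqref{eq:key1}.}
Along the sequence $(\alpha_n)$ of Proposition \ref{prop:quadruplication_Ekeland}\ref{item:CIL} we have $\alpha_n\Psi_{\alpha_n}(x_{\alpha_n})\to 0$, $\Xi_{\alpha_n}(x_{\alpha_n})\to 0$, and $\epsaln d^2(\rho_{\alpha_n},\nu_0)+\epsaln d^2(\gamma_{\alpha_n},\nu_0)\to 0$. To get \eqref{eq:key1} I would take \eqref{eqn:EVI_estimate_1} divided by $(1-\epsaln)$ minus \eqref{eqn:EVI_estimate_2} divided by $(1+\epsaln)$; the two "middle" terms $\pm\alpha_n[\cE(\gamma_{\alpha_n})-\cE(\rho_{\alpha_n})]$ and $\pm\alpha_n\tfrac{\kappa}{2}d^2(\rho_{\alpha_n},\gamma_{\alpha_n})$ cancel up to factors $1\pm\epsal$ that differ from $1$ by $\cO(\epsaln)=\cO(\alpha_n^{-1})$, and since $\alpha_n d^2(\rho_{\alpha_n},\gamma_{\alpha_n})=\cO(\alpha_n\Psi_{\alpha_n}(x_{\alpha_n}))\to 0$ and $|\cE(\gamma_{\alpha_n})-\cE(\rho_{\alpha_n})|$ is controlled by $\alpha_n^{-1}\Xi_{\alpha_n}/\epsal$-type quantities, all these cross terms are $o(1)$; the $\epsaln c_1$ corrections are $o(1)$ for the same reasons (they involve $\epsaln$ times bounded energies and $\epsaln d^2(\cdot,\nu_0)$); the $\pm(1\pm\epsal)\alpha_n^{-1}$ residuals are $o(1)$; what survives is exactly $-\tfrac{\epsaln}{1-\epsaln}I(\rho_{\alpha_n})-\tfrac{\epsaln}{1+\epsaln}I(\gamma_{\alpha_n})+o(1)$.

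\textbf{Main obstacle.}
The delicate point is the passage to the limit $t\downarrow 0$ in the one-sided difference quotients: one must justify that $\tfrac{\dd^+}{\dd t}\big|_{t=0}$ of $\tfrac{\epsal}{1-\epsal}\bar\cE(\rho_\alpha(t))$ equals $-\tfrac{\epsal}{1-\epsal}I(\rho_\alpha)$ plus the $\nu_0$-correction, and simultaneously that the squared-distance and Tataru terms behave as claimed, \emph{without} assuming any a priori regularity of $t\mapsto\cE(\rho_\alpha(t))$ beyond what Lemma \ref{lem:EVI_properties} provides. The clean way around this is to integrate \ref{item:ass_EVI} and the energy identity over $[0,t]$, divide by $t$, and only \emph{then} pass to the $\liminf$/$\limsup$; this turns the pointwise derivative into averaged quantities whose limits are governed by lower semicontinuity of $\cE$ and the monotonicity of $t\mapsto\cE(\rho_\alpha(t))$, exactly as in the arguments behind Lemmas 2.5--2.6 of \cite{FeMiZi21}. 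Keeping careful track of which base point each application of \ref{item:ass_EVI} uses — $\pi_\alpha$, $\gamma_\alpha$, $\mu_\alpha$, $\rho_\alpha$, and $\nu_0$ — is the principal bookkeeping burden.
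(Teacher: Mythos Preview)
Your overall strategy is correct and matches the paper's: perturb $\rho_\alpha$ (resp.\ $\gamma_\alpha$) along the EVI gradient flow, use the Ekeland optimality of $x_\alpha$, integrate \ref{item:ass_EVI} and the energy identity over $[0,s]$, bound the Tataru term by Lemma \ref{lemma:estimates_Tataru}(b), divide by $s$ and send $s\downarrow 0$. The treatment of the main obstacle (integrate first, then take the limit) is exactly what the paper does, and the bookkeeping with the three base points $\pi_\alpha,\gamma_\alpha,\nu_0$ is right.

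There is, however, a genuine gap in your argument for $\cE(\pi_\alpha)<+\infty$. You write that \ref{item:ass_EVI} applied to the flow $\rho_\alpha(\cdot)$ with test point $\pi_\alpha$ ``forces $\cE(\pi_\alpha)<+\infty$ as soon as $\tfrac12\tfrac{\dd^+}{\dd t}d^2(\rho_\alpha(t),\pi_\alpha)$ is finite''. But \ref{item:ass_EVI} is an \emph{upper} bound on the derivative: finiteness of the left-hand side gives only a \emph{lower} bound on $\cE(\pi_\alpha)$, never an upper bound; if $\cE(\pi_\alpha)=+\infty$ the inequality holds trivially. The paper's argument is different and uses the viscosity subsolution property \eqref{eqn:viscosity_sub_onpi0} established in Step~2 of the proof of Theorem \ref{theorem:comparison_principle_tildeoperators}: since $g^\dagger(\pi_\alpha)$ contains the term $-\alpha\cE(\pi_\alpha)$ and $u(\pi_\alpha),h^\dagger(\pi_\alpha),\cE(\rho_\alpha)$ are all finite, the inequality $u(\pi_\alpha)\leq \lambda g^\dagger(\pi_\alpha)+h^\dagger(\pi_\alpha)$ forces $\cE(\pi_\alpha)<+\infty$ (and symmetrically for $\cE(\mu_\alpha)$ via the supersolution property). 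This is essential, because your subsequent applications of \ref{item:ass_EVI} with base point $\pi_\alpha$ are only informative when $\pi_\alpha\in\cD(\cE)$.

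A minor remark on the corollary: after dividing \eqref{eqn:EVI_estimate_1} by $(1-\epsaln)$ and \eqref{eqn:EVI_estimate_2} by $(1+\epsaln)$, the coefficients of $\alpha_n[\cE(\gamma_{\alpha_n})-\cE(\rho_{\alpha_n})]$ become exactly $1$ on both sides, so these terms cancel \emph{exactly}; you do not need any control on $|\cE(\gamma_{\alpha_n})-\cE(\rho_{\alpha_n})|$. The only residual ``middle'' term is $-\alpha_n\kappa\, d^2(\rho_{\alpha_n},\gamma_{\alpha_n})$, which is $o(1)$ by Proposition \ref{prop:quadruplication_Ekeland}\ref{item:CIL}.
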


\begin{proof}
The fact that $\mathcal{E}(\pi_\alpha)<+\infty$ follows from the subsolution property \eqref{eqn:viscosity_sub_onpi0}  of $u$ and the fact that $u(\pi_\alpha), h^\dagger(\pi_\alpha),\mathcal{E}(\rho_\alpha)$ are all finite quantities. The proof that $\mathcal{E}(\mu_\alpha)<+\infty$ is analogous. Fix $s>0$. From \eqref{item:ass_EVI} and Ekeland's principle \eqref{eqn:Ekeland_optimality} we obtain that the  gradient flow started at $\rho_\alpha$  satisfies
\begin{align*}
   &\alpha \int_0^s \mathcal{E}(\rho_\alpha(r)) -\mathcal{E}(\pi_\alpha)+\frac{\kappa}{2}d^2(\rho_\alpha(r),\pi_\alpha) \, dr\\ &\leq\frac{\alpha d^2(\rho_\alpha,\pi_\alpha)}{2}-\frac{\alpha d^2(\rho_\alpha(s),\pi_\alpha)}{2}\\
   &=(1-\epsal)\Big(\frac{\alpha d^2(\rho_\alpha,\pi_\alpha)}{2(1-\epsal)}+\mathcal{G}_\alpha(x_\alpha)\Big)-(1-\epsal)\Big(\frac{\alpha d^2(\rho_\alpha(s),\pi_\alpha)}{2(1-\epsal)}+\mathcal{G}_\alpha(x_\alpha)\Big)\\
   &\leq (1-\epsal)\Big(\frac{\alpha d^2(\rho_\alpha,\pi_\alpha)}{2(1-\epsal)} +\mathcal{G}_\alpha(x_\alpha)\Big)\\ &-(1-\epsal)\Big(\frac{\alpha d^2(\rho_\alpha{(s)},\pi_\alpha)}{2(1-\epsal)} +\mathcal{G}_\alpha(\pi_\alpha,\mu_\alpha,\rho_\alpha {(s)},\gamma_\alpha)-\alpha^{-1}\mathcal{B}_\alpha(\pi_\alpha,\mu_\alpha,\rho_\alpha{(s)},\gamma_\alpha) \Big). 
\end{align*}
Recalling \eqref{eqn:Gdef}, we can rewrite the last expression as 
\begin{align}
    &(1-\epsal)\alpha\left[\frac{d^2(\rho_\alpha(s),\gamma_\alpha)}{2}-\frac{d^2(\rho_\alpha,\gamma_\alpha)}{2}\right] \label{eqn:EVI_bound1}\\
    &+\epsal[\mathcal{E}(\rho_\alpha(s))-\mathcal{E}(\rho_\alpha)]{+\epsal\frac{c_1} 2[d^2(\rho_\alpha(s),\nu_0)-d^2(\rho_\alpha,\nu_0)]} \label{eqn:energy_identity_bound1}\\
    &+ (1-\epsal)\alpha^{-1} d_T(\rho_\alpha(s),\rho_\alpha).\label{tataru_bound1}
\end{align}
Using \eqref{item:ass_EVI} in \eqref{eqn:EVI_bound1}, the energy identity \eqref{eq_energy_identity}, again \eqref{item:ass_EVI}  in \eqref{eqn:energy_identity_bound1} and Lemma \ref{lemma:estimates_Tataru} (b) in  \eqref{tataru_bound1} we obtain the upper bound
\begin{align*}
    &\int_0^s \alpha(1-\epsal)[\mathcal{E}(\gamma_\alpha)-\mathcal{E}(\rho_\alpha(r)) -\frac{\kappa}{2}d^2(\rho_\alpha(r),\gamma_\alpha)] - \epsal I(\rho_\alpha(r)) d r  + (1-\epsal)\alpha^{-1} s\\
     &{+ \int_0^s\epsal c_1 [\mathcal{E}(\nu_0)-\mathcal{E}(\rho_\alpha(r)) -\frac{\kappa}{2}d^2(\rho_\alpha(r),\nu_0)] dr }. 
\end{align*}
Dividing by $s$ and letting $s\rightarrow0$ we obtain \eqref{eqn:EVI_estimate_1}, recalling that $r\mapsto d^2(\rho_\alpha(r),\gamma_\alpha)$, $r \mapsto d^2(\rho_\alpha(r),\nu_0)$, $r\mapsto \mathcal{E}(\rho_\alpha(r))$ are continuous functions and that $r \mapsto I(\rho_\alpha(r))$ is right continuous by Lemma \ref{lem:EVI_properties} \ref{item:Irightcontinuous}. Arguing in the same way, we obtain \eqref{eqn:EVI_estimate_2}. Finally, having proved   \eqref{eqn:EVI_estimate_1}, if we observe that all terms except $I(\rho_\alpha)$ are finite, we can deduce that $I(\rho_\alpha)<+\infty$.  The proof that $I(\gamma_\alpha)<+\infty$ is completely analogous. At this point, inequality \eqref{eq:key1} follows due to Proposition \ref{prop:quadruplication_Ekeland}-\ref{item:CIL}.
\end{proof}

In the following lemma we obtain obtain an upper bound for the second term in  \eqref{eqn:g0pi01}. Here, it is the fact that $(E,d)$ is a geodesic space together with the geometric conditions \eqref{eq: angle condition} \eqref{eq: energy directional derivative} that play a crucial role.
\begin{lemma}\label{lemma:key2}
For fixed $\alpha>0$, let $x_\alpha =(\pi_\alpha,\mu_\alpha,\rho_\alpha,\gamma_\alpha)$ be as in the proof of Theorem \ref{theorem:comparison_principle_tildeoperators}. Then we have 
\begin{equation}\label{eq: dir der lemma 26}
\frac{\alpha^2}{2} d^2(\rho_\alpha,\pi_\alpha)  \leq (1-\epsal) \frac{1}{2} [\alpha^{-1}+\alpha d(\rho_\alpha,\gamma_\alpha)]^2 + \frac{\epsal}{2} [\sqrt{I(\rho_\alpha)}+ c_1 d(\rho_\alpha,\nu_0)]^2 
\end{equation}
and 
\begin{equation}\label{eq: dir der lemma 25}
   \frac{\alpha^2}{2} d^2(\gamma_\alpha,\mu_\alpha)  \geq (1+\epsal)\frac{\alpha^2}{2}d^2(\gamma_{\alpha},\rho_{\alpha})-\frac12\epsal\left(c_1d(\gamma_{\alpha},\nu_0)+\sqrt{I(\gamma_{\alpha})}\right)^2  + o(1).
\end{equation}
	As a corollary, if $(\alpha_n)_{n\in\mathbb{N}}$ is the sequence given by Proposition \ref{prop:quadruplication_Ekeland}-\ref{item:CIL}, then we have 
	\begin{equation}\label{eq:key2}
	   \frac{\alpha^2_n}{2(1-\epsaln)}d^2(\pi_{\alpha_n},\rho_{\alpha_n}) -\frac{\alpha^2_n}{2(1+\epsaln)}d^2(\mu_{\alpha_n},\gamma_{\alpha_n}) \leq \frac{\epsaln}{(1-\epsaln)} I(\rho_{\alpha_n})  +\frac{\epsaln}{(1+\epsaln)}   I(\gamma_{\alpha_n})+  o(1).
        \end{equation}
\end{lemma}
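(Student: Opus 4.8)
The plan is to derive both displayed inequalities from the optimality of $x_\alpha$ (with components $\pi_\alpha,\rho_\alpha,\mu_\alpha,\gamma_\alpha$) for $\cG_\alpha-\alpha^{-1}\cB_\alpha$, proved in Proposition~\ref{prop:quadruplication_Ekeland}-\ref{item:Ekeland_optimality}: one freezes all variables but one and perturbs the remaining variable along a regularised geodesic supplied by Assumption~\ref{assumption:regularized_geodesics}, then divides by the time parameter and passes to the limit. By Lemma~\ref{lemma:key_estimate_drift} we already have $\cE(\pi_\alpha),\cE(\mu_\alpha)<+\infty$ and $I(\rho_\alpha),I(\gamma_\alpha)<+\infty$, so $I(\rho_\alpha)+\cE(\pi_\alpha)<+\infty$ and $I(\gamma_\alpha)+\cE(\rho_\alpha)<+\infty$, which makes Assumption~\ref{assumption:regularized_geodesics} applicable to the pairs $(\rho_\alpha,\pi_\alpha)$ and $(\gamma_\alpha,\rho_\alpha)$. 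We may assume $d(\rho_\alpha,\pi_\alpha)>0$ and $d(\gamma_\alpha,\rho_\alpha)>0$, the degenerate cases being immediate from the non-negativity of all terms involved and from $\epsal<1/3$.

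For \eqref{eq: dir der lemma 26} I perturb the $\rho$-variable. Fix $\theta>0$, let $\zeta$ be the geodesic from $\rho_\alpha$ to $\pi_\alpha$ and $(\zeta_\theta(t))_{t\in[0,\tau]}$ the associated regularised geodesic, so $\zeta_\theta(0)=\rho_\alpha$ and $\limsup_{t\downarrow0}t^{-1}d(\zeta_\theta(t),\zeta(t))\le\theta$. Since only the terms $-\tfrac{\alpha}{2(1-\epsal)}d^2(\pi_\alpha,\cdot)-\tfrac{\alpha}{2}d^2(\cdot,\gamma_\alpha)-\tfrac{\epsal}{1-\epsal}\bar\cE(\cdot)-\alpha^{-1}d_T(\cdot,\rho_\alpha)$ of $\cG_\alpha-\alpha^{-1}\cB_\alpha$ depend on the second argument, its maximality at $\rho_\alpha$ gives, after multiplication by $(1-\epsal)/\alpha$ and for all $t\in(0,\tau]$,
\begin{equation*}
\tfrac12\bigl[d^2(\pi_\alpha,\rho_\alpha)-d^2(\pi_\alpha,\zeta_\theta(t))\bigr]\le\tfrac{1-\epsal}{2}\bigl[d^2(\zeta_\theta(t),\gamma_\alpha)-d^2(\rho_\alpha,\gamma_\alpha)\bigr]+\tfrac{\epsal}{\alpha}\bigl[\bar\cE(\zeta_\theta(t))-\bar\cE(\rho_\alpha)\bigr]+\tfrac{1-\epsal}{\alpha^2}d_T(\zeta_\theta(t),\rho_\alpha).
\end{equation*}
Dividing by $t$ and letting $t\downarrow0$ along a sequence realising $\liminf_{t\downarrow0}t^{-1}(\cE(\zeta_\theta(t))-\cE(\rho_\alpha))$ (which is finite, being $\le\sqrt{I(\rho_\alpha)}\,(d(\rho_\alpha,\pi_\alpha)+\theta)$ by \eqref{eq: energy directional derivative}), and using the geodesic identity $d(\zeta(t),\pi_\alpha)=(1-t)d(\rho_\alpha,\pi_\alpha)$, the angle condition \eqref{eq: angle condition}, the triangle inequality, the bound $d_T\le d$, the directional-derivative bound \eqref{eq: energy directional derivative} for the entropy part of $\bar\cE$ and the local Lipschitz behaviour of $d^2(\cdot,\nu_0)$ for the quadratic part, one bounds the left-hand quotient from below by $D(D-\theta)$, with $D:=d(\rho_\alpha,\pi_\alpha)$, and the three right-hand quotients from above by $(1-\epsal)d(\rho_\alpha,\gamma_\alpha)(D+\theta)$, $\tfrac{\epsal}{\alpha}(D+\theta)\bigl(\sqrt{I(\rho_\alpha)}+c_1 d(\rho_\alpha,\nu_0)\bigr)$ and $\tfrac{1-\epsal}{\alpha^2}(D+\theta)$. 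Dividing by $D+\theta$, letting $\theta\downarrow0$ and multiplying by $\alpha$ gives
\begin{equation*}
\alpha\, d(\rho_\alpha,\pi_\alpha)\le(1-\epsal)\bigl[\alpha^{-1}+\alpha\, d(\rho_\alpha,\gamma_\alpha)\bigr]+\epsal\bigl[\sqrt{I(\rho_\alpha)}+c_1 d(\rho_\alpha,\nu_0)\bigr];
\end{equation*}
both sides being non-negative, squaring and using convexity of $s\mapsto s^2$ with the weights $1-\epsal,\epsal$ produces \eqref{eq: dir der lemma 26}.

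For \eqref{eq: dir der lemma 25} the argument is dual: I perturb the $\gamma$-variable along a regularised geodesic $(\zeta_\theta(t))$ running from $\gamma_\alpha$ \emph{towards} $\rho_\alpha$. Now the relevant $\gamma$-dependent terms are $-\tfrac{\alpha}{2}d^2(\rho_\alpha,\cdot)-\tfrac{\alpha}{2(1+\epsal)}d^2(\cdot,\mu_\alpha)-\tfrac{\epsal}{1+\epsal}\bar\cE(\cdot)-\alpha^{-1}d_T(\cdot,\gamma_\alpha)$, and maximality at $\gamma_\alpha$ gives, after multiplying by $(1+\epsal)/\alpha$, the analogue of the displayed inequality above with the roles of the quadratic terms interchanged. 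The decisive difference is that, since we move towards $\rho_\alpha$, the quotient of $d^2(\rho_\alpha,\zeta_\theta(t))-d^2(\rho_\alpha,\gamma_\alpha)$ is bounded \emph{above} by $-2d^2(\gamma_\alpha,\rho_\alpha)+2\theta\, d(\gamma_\alpha,\rho_\alpha)$, whereas the quotient of $d^2(\gamma_\alpha,\mu_\alpha)-d^2(\zeta_\theta(t),\mu_\alpha)$ can only be controlled \emph{below}, by $-2d(\gamma_\alpha,\mu_\alpha)\bigl(d(\gamma_\alpha,\rho_\alpha)+\theta\bigr)$, via the triangle inequality and \eqref{eq: angle condition}. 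The same limit passage (sequence realising the entropy liminf, then dividing by $d(\gamma_\alpha,\rho_\alpha)+\theta$, then $\theta\downarrow0$, then multiplying by $\alpha$) yields, up to a correction of order $\alpha^{-1}$,
\begin{equation*}
\alpha\, d(\gamma_\alpha,\mu_\alpha)\ge(1+\epsal)\,\alpha\, d(\gamma_\alpha,\rho_\alpha)-\epsal\bigl[\sqrt{I(\gamma_\alpha)}+c_1 d(\gamma_\alpha,\nu_0)\bigr].
\end{equation*}
Squaring this (the case in which the right-hand side is negative being immediate from $d(\gamma_\alpha,\mu_\alpha)\ge0$ and $\epsal<1/3$) together with the elementary inequality $(P-Q)^2\ge\tfrac{1}{1+\epsal}P^2-\tfrac1\epsal Q^2$ applied with $P=(1+\epsal)\alpha\, d(\gamma_\alpha,\rho_\alpha)$ and $Q=\epsal\bigl(\sqrt{I(\gamma_\alpha)}+c_1 d(\gamma_\alpha,\nu_0)\bigr)$ gives \eqref{eq: dir der lemma 25}, the remainders being $o(1)$ along the sequence $(\alpha_n)$ of Proposition~\ref{prop:quadruplication_Ekeland}-\ref{item:CIL} because there $\alpha_n d^2(\rho_{\alpha_n},\gamma_{\alpha_n})\to0$, hence $d(\gamma_{\alpha_n},\rho_{\alpha_n})\to0$, and $\epsaln d^2(\rho_{\alpha_n},\nu_0),\epsaln d^2(\gamma_{\alpha_n},\nu_0)\to0$.

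Finally, \eqref{eq:key2} follows by adding $(1-\epsaln)^{-1}$ times \eqref{eq: dir der lemma 26} to $-(1+\epsaln)^{-1}$ times \eqref{eq: dir der lemma 25}: the ``classical'' squares collapse via $\tfrac12\bigl[\alpha_n^{-1}+\alpha_n d(\rho_{\alpha_n},\gamma_{\alpha_n})\bigr]^2-\tfrac{\alpha_n^2}{2}d^2(\rho_{\alpha_n},\gamma_{\alpha_n})=\tfrac12\alpha_n^{-2}+d(\rho_{\alpha_n},\gamma_{\alpha_n})=o(1)$, while the entropic squares are bounded using $2c_1 d\sqrt I\le I+c_1^2 d^2$ so as to leave $\tfrac{\epsaln}{1-\epsaln}I(\rho_{\alpha_n})+\tfrac{\epsaln}{1+\epsaln}I(\gamma_{\alpha_n})$ plus multiples of $\epsaln d^2(\rho_{\alpha_n},\nu_0)$ and $\epsaln d^2(\gamma_{\alpha_n},\nu_0)$, which vanish by Proposition~\ref{prop:quadruplication_Ekeland}-\ref{item:CIL}. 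The main obstacle is the limit passage in the two middle paragraphs: since the directional derivative in Assumption~\ref{assumption:regularized_geodesics} is only a $\liminf$, one must commit to a single sequence $t_k\downarrow0$ realising it and estimate the remaining three difference quotients from the correct side along that very sequence — which is precisely why the two-sided triangle and angle estimates are needed — all while carefully tracking the $\theta$-dependence before finally sending $\theta\downarrow0$; the asymmetry between the two bounds (the Tataru correction absorbed exactly in \eqref{eq: dir der lemma 26}, surviving as an $o(1)$ in \eqref{eq: dir der lemma 25}) mirrors the different roles of the $\dagger$ and $\ddagger$ variables.
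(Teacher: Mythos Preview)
Your proof is correct and follows essentially the same route as the paper's: perturb the $\rho$-variable along a regularised geodesic toward $\pi_\alpha$ to get \eqref{eq: dir der lemma 26}, perturb the $\gamma$-variable toward $\rho_\alpha$ to get \eqref{eq: dir der lemma 25}, and combine via Proposition~\ref{prop:quadruplication_Ekeland}\ref{item:CIL} for \eqref{eq:key2}. The only cosmetic difference is in the squaring step for \eqref{eq: dir der lemma 25}: the paper squares the inequality \emph{before} multiplying through by $(1+\epsal)$ and uses convexity of $s\mapsto s^2$ with weights $\tfrac{1}{1+\epsal},\tfrac{\epsal}{1+\epsal}$, whereas you first rearrange and then use the Young-type bound $(P-Q)^2\ge\tfrac{1}{1+\epsal}P^2-\tfrac{1}{\epsal}Q^2$; both are equivalent and the cross term coming from the $O(\alpha^{-1})$ correction is indeed $o(1)$ along $(\alpha_n)$ since $\alpha_n d^2(\gamma_{\alpha_n},\mu_{\alpha_n})\to 0$.
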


\begin{proof}
    We begin by proving \eqref{eq: dir der lemma 26}.  First note that if $d(\rho_\alpha,\pi_\alpha) = 0$, there is nothing to prove. We thus only prove the first statement in the case that $d(\rho_\alpha,\pi_\alpha) > 0$. To do so, we define the auxiliary function $\tilde{\mathcal G}_\alpha(\cdot)$ by
    \begin{align*}
        \tilde{\mathcal G}_\alpha(\cdot)&= -(1-\epsal)\mathcal G_\alpha(\pi_\alpha,\mu_\alpha,\cdot,\gamma_\alpha)\\
            &= \frac{\alpha}{2}d^2(\cdot,\pi_\alpha) + (1-\epsal)\frac{{\alpha}}{2}d^2(\cdot,\gamma_\alpha) +\epsal{\bar\cE(\cdot)} + c,
    \end{align*}
    where $c$ is a constant. We obtain from \eqref{eqn:Ekeland_optimality}, the definition of $\mathcal{B}_\alpha$ (see \eqref{eqn:Bdef}) and the Lipschitzianity of Tataru's distance that 
    \begin{equation*}
       \forall \rho\in E, \quad \tilde{\mathcal G}_\alpha(\rho_\alpha)-\tilde{\mathcal G}_\alpha(\rho) \leq (1-\epsal)\alpha^{-1} d_{T}(\rho,\rho_\alpha) \leq (1-\epsal)\alpha^{-1} d(\rho,\rho_\alpha).
    \end{equation*}
    Let us now consider a geodesic $\geo{\rho_\alpha}{\pi_\alpha}$, fix $\theta>0$ small enough, and consider the curve  $\geo{\rho_\alpha}{\pi_\alpha}_{\theta}$ given by  Assumption \ref{assumption:regularized_geodesics}. Choosing $\rho=\geo{\rho_\alpha}{\pi_\alpha}_{\theta}(s)$ in the above estimate and, dividing by $s$, and letting $s\downarrow 0$ we obtain 

    \begin{subequations}\label{eq: dirder lemma 1}
        \begin{align}
            & \liminf_{s\downarrow 0}\frac{\alpha}{2s}[d^2(\rho_\alpha,\pi_\alpha)-d^2(\geo{\rho_\alpha}{\pi_\alpha}_{\theta}(s),\pi_\alpha)]  \label{eq: dirder lemma 10} \\
            & \qquad \leq  \limsup_{s\downarrow 0} \frac{(1-\epsal)}{\alpha s} d(\geo{\rho_\alpha}{\pi_\alpha}_{\theta}(s),\rho_\alpha)  \label{eq: dirder lemma 11} \\
            & \qquad \qquad  + \limsup_{s\downarrow 0}\frac{(1-\epsal)\alpha}{2s}[d^2(\geo{\rho_\alpha}{\pi_\alpha}_{\theta}(s),\gamma_\alpha)-d^2(\rho_\alpha,\gamma_\alpha)] \label{eq: dirder lemma 12} \\
            & \qquad \qquad +  \liminf_{s\downarrow 0}\frac{\epsal}{s}{[\bar \cE(\geo{\rho_\alpha}{\pi_\alpha}_{\theta}(s))-\bar\cE(\rho_\alpha)]}. \label{eq: dirder lemma 13}
        \end{align}
    \end{subequations}
    We start with estimates for all the terms on the right-hand side of \eqref{eq: dirder lemma 1}. To this aim, we observe that for any $\sigma\in E$ we have, using the triangle inequality, the geodesic property and hypothesis \eqref{eq: angle condition}
    \begin{equation}\label{eq: dirder lemma 20}
    \begin{split}
        d(\geo{\rho_\alpha}{\pi_\alpha}_{\theta}(s),\sigma) & \leq d(\rho_\alpha,\sigma)+d(\geo{\rho_\alpha}{\pi_\alpha}_{\theta}(s),\rho_\alpha) \\
        & \leq d(\rho_\alpha,\sigma) + s d(\rho_{\alpha},\pi_{\alpha}) + d(\geo{\rho_\alpha}{\pi_\alpha}_{\theta}(s),\geo{\rho_\alpha}{\pi_\alpha}(s))\\
        & \leq d(\rho_\alpha,\sigma) + s d(\rho_{\alpha},\pi_{\alpha}) + s \theta(1+o(1)).
    \end{split}
    \end{equation}
     Choosing $\sigma=\rho_{\alpha}$ to bound \eqref{eq: dirder lemma 11}, $\sigma = \gamma_\alpha$ for \eqref{eq: dirder lemma 12}, and $\sigma=\nu_0$ to bound the distance term of \eqref{eq: dirder lemma 13} together with
    \begin{equation}\label{eq: dirder lemma 2}
       \liminf_{s \downarrow 0}  \frac{1}{s}[\cE(\geo{\rho_\alpha}{\pi_\alpha}_{\theta}(s))-\cE(\rho_\alpha)]  \stackrel{\eqref{eq: energy directional derivative}}{\leq} \sqrt{I(\rho_\alpha)}(d(\rho_\alpha,\pi_\alpha)+\theta)
    \end{equation}
    for the energy term of \eqref{eq: dirder lemma 13}, we obtain that the right hand side in \eqref{eq: dirder lemma 1} is bounded above by
    \begin{equation}\label{eq: dirder lemma 23}
        (d(\pi_{\alpha},\rho_{\alpha})+\theta) \left((1-\epsal)(\alpha^{-1} +\alpha d(\rho_{\alpha},\gamma_{\alpha})) +\epsal\left( c_1 d(\rho_{\alpha},\nu_{0})+\sqrt{I(\rho_{\alpha})}\right)\right).
    \end{equation}
    Let us now turn the attention to \eqref{eq: dirder lemma 10}. Here, using that 
    \begin{equation*}
        d(\pi_{\alpha},\geo{\rho_\alpha}{\pi_\alpha}_{\theta}(s))\leq (1-s)d(\rho_{\alpha},\pi_{\alpha})+  d(\geo{\rho_\alpha}{\pi_\alpha}(s),\geo{\rho_\alpha}{\pi_\alpha}_{\theta}(s))\leq (1-s)d(\rho_{\alpha},\pi_{\alpha})+s\theta(1+o(1))
    \end{equation*}
    we find that \eqref{eq: dirder lemma 10} is bounded below by 
    \begin{equation}\label{eq: dirder lemma 24}
        \alpha d(\rho_{\alpha},\pi_{\alpha})(d(\rho_{\alpha},\pi_{\alpha}) -  \theta).
    \end{equation}
    Assembling together \eqref{eq: dirder lemma 23} with \eqref{eq: dirder lemma 24}, dividing by $d(\rho_{\alpha},\pi_{\alpha})$ and letting $\theta\rightarrow 0$ yields 
    \begin{equation*}
         \alpha d(\rho_{\alpha},\pi_{\alpha}) \leq \left((1-\epsal)(\alpha^{-1} +\alpha d(\rho_{\alpha},\gamma_{\alpha})) +\epsal\left( c_1 d(\rho_{\alpha},\nu_{0})+\sqrt{I(\rho_{\alpha})}\right)\right),
    \end{equation*}
    from which the bound \eqref{eq: dir der lemma 26} is obtained taking squares on both sides, using convexity of the square function on the right hand side and eventually dividing by two.

    Let us now proceed to the proof of the second inequality. We do the proof in detail as, even though it uses some arguments similar to those used to obtain the first estimate, there are also some non trivial differences.  We begin by noting that we can assume without loss of generality that $d(\rho_\alpha,\gamma_\alpha) > 0$. Next, define the auxiliary test function $\bar{\mathcal G}_\alpha(\cdot)$ by
    \begin{align*}
        \bar{\mathcal G}_\alpha(\cdot)&= -\mathcal G_\alpha(\pi_\alpha,\mu_\alpha,\rho_\alpha,\cdot)\\
            &= \frac{\alpha}{2(1+\epsal) }d^2(\cdot,\mu_\alpha) +\frac{{\alpha}}{2}d^2(\rho_\alpha,\cdot) +\frac \epsal {(1+\epsal)}\bar \cE(\cdot) {+ c}.
    \end{align*}
    We obtain from \eqref{eqn:Ekeland_optimality}, the definition of $\mathcal{B}_\alpha$ (see \eqref{eqn:Bdef}) and the Lipschitzianity of Tataru's distance that 
    \begin{equation*}
       \forall \gamma\in E, \quad \bar{\mathcal G}_\alpha(\gamma_\alpha)-\bar{\mathcal G}_\alpha(\gamma) \leq \alpha^{-1} d_{T}(\gamma,\gamma_\alpha) \leq \alpha^{-1} d(\gamma,\gamma_\alpha).
    \end{equation*}
   
    Let us now consider a geodesic from $\gamma_\alpha$ to $\rho_\alpha$,  $\geo{\gamma_\alpha}{\rho_\alpha}$, (Due to the fact that we don't have linearity and all the properties of the flow given in Assumption \ref{assumption:regularized_geodesics} are given with $\limsup$, we have to go from $\gamma_\alpha$ to $\rho_\alpha$  while for the other inequality we had to go from $\rho_\alpha$ to $\pi_\alpha$) fix a $\theta>0$ small enough, and consider the curve  $\geo{\gamma_\alpha}{\rho_\alpha}_{\theta}$ given by  Assumption \ref{assumption:regularized_geodesics}. Using the previous estimate, we have, for all  $s$ small enough,
    \begin{subequations}\label{eq: dirder lemma 1b}
        \begin{align}
        & \liminf_{s\downarrow0}\frac{\alpha}{2s}[d^2(\rho_\alpha,\gamma_\alpha)-d^2(\rho_\alpha,\geo{\gamma_\alpha}{\rho_\alpha}_{\theta}(s))] \label{eq: dirder lemma 1b0} \\
         & \qquad \leq\limsup_{s\downarrow 0} \frac{1}{\alpha s}d(\geo{\gamma_\alpha}{\rho_\alpha}_{\theta}(s),\gamma_\alpha) \label{eq: dirder lemma 1b1} \\
         & \qquad \qquad +\limsup_{s\downarrow 0} \frac{\alpha}{2s(1+\epsal)}\left(d^2(\geo{\gamma_\alpha}{\rho_\alpha}_{\theta}(s),\mu_\alpha)-d^2(\gamma_\alpha,\mu_\alpha)\right) \label{eq: dirder lemma 1b2} \\
         & \qquad \qquad + \liminf_{s\downarrow 0} \frac \epsal{s(1+\epsal)}[\bar\cE(\geo{\gamma_\alpha}{\rho_\alpha}_{\theta}(s))-\bar \cE(\gamma_\alpha)].\label{eq: dirder lemma 1b3}
         \end{align}
    \end{subequations}

    In order to estimate all terms containing $d$ on the right hand side, we use the analogous of \eqref{eq: dirder lemma 20}, namely that for all $\sigma\in E$ 
    \begin{equation}\label{eq: dirder lemma 22}
    \begin{split}
        d(\geo{\gamma_\alpha}{\rho_\alpha}_{\theta}(s),\sigma)&\leq d(\gamma_\alpha,\sigma)+d(\geo{\gamma_\alpha}{\rho_\alpha}_{\theta}(s),\gamma_\alpha) \\
        &\leq d(\gamma_\alpha,\sigma) + sd(\gamma_{\alpha},\rho_{\alpha})+ d(\geo{\gamma_\alpha}{\rho_\alpha}_{\theta}(s),\geo{\gamma_\alpha}{\rho_\alpha}(s))\\
        &\leq d(\gamma_\alpha,\sigma) + sd(\gamma_{\alpha},\rho_{\alpha})+s\theta(1+o(1)).
    \end{split}
    \end{equation}
    Indeed, choosing $\sigma=\gamma_{\alpha}$ to bound the right hand side of \eqref{eq: dirder lemma 1b1}, $\sigma=\mu_{\alpha}$ to bound \eqref{eq: dirder lemma 1b2}, $\sigma=\nu_0$ to bound the distance term of \eqref{eq: dirder lemma 1b3} and
    \begin{equation}\label{eq: dirder lemma 2b}
        \liminf_{s \downarrow 0}  \frac{1}{s}[\cE(\geo{\gamma_\alpha}{\rho_\alpha}_{\theta}(s))-\cE(\gamma_\alpha)]  \stackrel{\eqref{eq: energy directional derivative}}{\leq} \sqrt{I(\gamma_\alpha)}(d(\gamma_\alpha,\rho_\alpha)+\theta)
    \end{equation}
    for the energy term of \eqref{eq: dirder lemma 1b3}, we obtain that the right hand side in \eqref{eq: dirder lemma 1b} is bounded above by
    \begin{equation}\label{eq: dirder lemma 23b}
        (d(\gamma_{\alpha},\rho_{\alpha})+\theta) \left(\alpha^{-1}+\frac{\alpha}{(1+\epsal)}d(\gamma_{\alpha},\mu_{\alpha})+\frac \epsal{(1+\epsal)}\left( c_1 d(\gamma_{\alpha},\nu_{0})+\sqrt{I(\gamma_{\alpha})}\right)\right)
    \end{equation}
    
    Let us now turn the attention to \eqref{eq: dirder lemma 1b0}. Here, using that 
    \begin{equation*}
        d(\rho_{\alpha},\geo{\gamma_\alpha}{\rho_\alpha}_{\theta}(s))\leq (1-s)d(\rho_{\alpha},\gamma_{\alpha})+  d(\geo{\gamma_\alpha}{\rho_\alpha}(s),\geo{\gamma_\alpha}{\rho_\alpha}_{\theta}(s))\leq (1-s)d(\rho_{\alpha},\gamma_{\alpha})+s\theta(1+o(1))
    \end{equation*}
    we obtain that \eqref{eq: dirder lemma 1b0} is bounded below by
    \begin{equation}\label{eq: dirder lemma 24b}
        \alpha d(\rho_{\alpha},\gamma_{\alpha})(d(\rho_{\alpha},\gamma_{\alpha}){-}\theta).
    \end{equation}
    Assembling together \eqref{eq: dirder lemma 23b} with \eqref{eq: dirder lemma 24b}, dividing by $d(\rho_{\alpha},\gamma_{\alpha})$ and letting $\theta\rightarrow 0$ yields 
    \begin{equation*}
        \alpha d(\rho_{\alpha},\gamma_{\alpha})-\alpha^{-1} \leq \frac{\alpha}{(1+\epsal)}d(\gamma_{\alpha},\mu_{\alpha})+\frac \epsal{(1+\epsal)}\left( c_1 d(\gamma_{\alpha},\nu_{0})+\sqrt{I(\gamma_{\alpha})}\right)
    \end{equation*}
    If $ \alpha d(\rho_{\alpha},\gamma_{\alpha})-\alpha^{-1}\geq 0$ the bound \eqref{eq: dir der lemma 25} is obtained taking squares on both sides, using convexity of the square function on the right hand side and the fact that $d(\rho_{\alpha},\gamma_{\alpha})$ is $o(1)$. If $ \alpha d(\rho_{\alpha},\gamma_{\alpha})-\alpha^{-1}<0$, it is easily seen that the right hand side of \eqref{eq: dir der lemma 25} is bounded above by a function that is $o(1)$, from which the desired conclusion follows. 
    
    Finally, the bound \eqref{eq:key2} is a consequence of \eqref{eq: dir der lemma 25},\eqref{eq: dir der lemma 26}, Proposition \ref{prop:quadruplication_Ekeland}-\ref{item:CIL} and the basic inequality 
    \begin{equation*}
      \frac{1}{2} \Big( c_1 d(\cdot,\nu_0)+ \sqrt{I(\cdot)}\Big)^2\leq  c_1^2d^2(\cdot,\nu_0)+ I(\cdot).
    \end{equation*}

\end{proof}

\section{Consequences of EVI and properties of the Tataru distances} \label{EVI-Tataru}

\subsection{Consequences of EVI}  \label{section:consequences_EVI}
In this section we deduce from EVI various estimates on the behavior of $d$, $\cE$ {and $I$} along the gradient flow.  These estimates play a fundamental role in the proof of the comparison principle and are be obtained with little effort from those of \cite{MuSa20}.

\begin{lemma}\label{lem:EVI_properties}
Let Assumption \ref{assumption:distance_and_energy}  and \ref{assumption:gradientflow}  hold (in particular EVI inequality \eqref{item:ass_EVI}). 
{For $\mu\in E$,} let $(\mu(t))_{t\geq 0 }$ be the corresponding gradient flow starting at $\mu.$

Then the following holds: 
\begin{enumerate}[(a)]
{ \item \label{item:EVI quadratic lower bound}  For each $c_1 >-\kappa$ and for each $\nu\in E$ there exist $c_2,\tilde{c}_2 \in \bR$ such that
if we set 
\begin{equation*}
    \forall \pi \in E,\quad \bar{\cE}(\pi):=\cE(\pi) + \frac{c_1}{2} d^2(\pi,\nu) + c_2, 
\end{equation*}
then we have
			\begin{equation}\label{eq:lower bound on energy}
				\inf_{\pi\in E} \bar{\cE}(\pi)=0
            \end{equation} 
            and
            \begin{equation*}
                \forall \pi \in E \quad \bar{\cE}(\pi)\geq \frac{\kappa + c_1}{2} d^2(\pi,\nu)+\tilde{c}_2.
            \end{equation*}
            
}
\item\label{item: EVI energy identity} For any $t>0$ we have
		\begin{equation}\label{eq_energy_identity}
		\cE(\mu(t)) - \cE(\mu) = -\int_0^t I(\mu(s)) \dd s.
		\end{equation}
		\item \label{item:domainIdense} The domain $\cD(I)$ is dense in $\cD(\cE)$ and dense in $E$. In particular, the domain $\cD(\cE)$ of $\cE$ is dense in $E$.
		\item \label{item:Irightcontinuous}
		For any $t>0$, we have $I(\mu(t)) < \infty$. The map $t \mapsto I(\mu(t))$ is right-continuous at any $t_0 \geq 0$ such that $I(\mu(t_0)) < \infty$.
        \item\label{item: EVI contraction}		Let $\nu\in E$  and let $(\nu(t))_{t\geq 0}$ be the corresponding gradient flow starting at $\nu$. Then we have 
		\begin{equation}\label{lemma:distance_contracting_under_gradient_flow}
	    d(\mu(t),\nu(t)) \leq e^{-{\kappa} t }d(\mu,\nu)\quad \forall t\in[0,+\infty).
		\end{equation}
		In particular, for a given $\mu\in E$, there is at most one solution of \eqref{item:ass_EVI} such that $\mu(t)\rightarrow \mu$ as $t\rightarrow0$.
\end{enumerate}
\end{lemma}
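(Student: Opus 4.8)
The plan is to read off each item from the established theory of $EV\!I_\kappa$ gradient flows collected in \cite{MuSa20} (see also \cite{AmGiSa08}); the only genuine work is to assemble the correct statements and to carry out two small adaptations: the quadratic shift in \ref{item:EVI quadratic lower bound} needed because we allow $\kappa<0$, and the right-continuity of $I$ in \ref{item:Irightcontinuous}.

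For \ref{item: EVI energy identity} I would use that, by \ref{item:ass_EVI}, $(\mu(t))_{t\ge0}$ is a gradient flow in the sense of curves of maximal slope, so that the energy-dissipation identity $\tfrac{\dd}{\dd t}\cE(\mu(t))=-I(\mu(t))$ holds for a.e.\ $t>0$ and $t\mapsto\cE(\mu(t))$ is locally absolutely continuous on $(0,\infty)$ (see \cite{MuSa20}); integrating gives \eqref{eq_energy_identity} on every interval $[s,t]$ with $s>0$. To pass to $s\downarrow0$ I would combine monotonicity of $s\mapsto\cE(\mu(s))$ and lower semi-continuity of $\cE$ with the bound obtained by integrating \ref{item:ass_EVI} with $\rho=\mu$ on $[0,s]$, namely $\cE(\mu(s))\le\cE(\mu)-\tfrac{d^2(\mu(s),\mu)}{2s}+\tfrac{|\kappa|}{2}\sup_{r\le s}d^2(\mu(r),\mu)$, whence $\cE(\mu(s))\to\cE(\mu)$ when $\mu\in\cD(\cE)$; and when $\mu\notin\cD(\cE)$, if $\int_0^tI(\mu(r))\,\De r$ were finite then lower semi-continuity would force $\cE(\mu)<\infty$, a contradiction, so both sides of \eqref{eq_energy_identity} equal $-\infty$.

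Items \ref{item:Irightcontinuous}, \ref{item:domainIdense} and \ref{item: EVI contraction} are short consequences. From \eqref{eq_energy_identity} on $[s,t]$ with $0<s\le t$ we get $\int_s^tI(\mu(r))\,\De r=\cE(\mu(s))-\cE(\mu(t))<\infty$, hence $I(\mu(r))<\infty$ for a.e.\ $r$; the regularising property that $r\mapsto|\partial\cE|(\mu(r))$ is non-increasing and lower semi-continuous \cite{MuSa20} then upgrades this to $I(\mu(t))<\infty$ for all $t>0$ and to right-continuity of $t\mapsto I(\mu(t))$ where it is finite, which is \ref{item:Irightcontinuous}. Since the flow from any $\pi\in E$ satisfies $\pi(t)\to\pi$ as $t\downarrow0$ and $\pi(t)\in\cD(I)$ for $t>0$, $\cD(I)$ is dense in $E$, hence so is $\cD(\cE)\supseteq\cD(I)$, and $\cD(I)$ is dense in $\cD(\cE)$; this is \ref{item:domainIdense}. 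For \ref{item: EVI contraction} I would run the standard doubling-in-time argument: write \ref{item:ass_EVI} for $(\mu(t))_t$ with reference $\nu(s)$ and for $(\nu(t))_t$ with reference $\mu(s)$, add, and evaluate on the diagonal $s=t$ to get $\tfrac{\dd^+}{\dd t}d^2(\mu(t),\nu(t))\le-2\kappa\,d^2(\mu(t),\nu(t))$; Gronwall's lemma gives \eqref{lemma:distance_contracting_under_gradient_flow}, and uniqueness of an $EV\!I_\kappa$ solution with prescribed initial datum is the case $\mu=\nu$.

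The crux is \ref{item:EVI quadratic lower bound}, for which I would first establish $\cE(\pi)\ge\tfrac{\kappa}{2}d^2(\pi,\nu)-M(\nu)$. For $\nu\in\cD(I)$ the flow is locally Lipschitz near $0$ with metric speed $\le|\partial\cE|(\nu)$ (again a regularising consequence of \ref{item:ass_EVI}, \cite{MuSa20}); writing \ref{item:ass_EVI} with reference $\pi\in\cD(\cE)$, passing to the upper right derivative at $t=0$ and using $d(\nu(t),\pi)\ge d(\nu,\pi)-d(\nu,\nu(t))$ yields $\cE(\pi)\ge\cE(\nu)-|\partial\cE|(\nu)\,d(\nu,\pi)+\tfrac{\kappa}{2}d^2(\nu,\pi)$ (trivial for $\pi\notin\cD(\cE)$), and a general $\nu$ is reached by running the flow from $\nu$ and letting time tend to $0$ via \eqref{eq_energy_identity}. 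Adding $\tfrac{c_1}{2}d^2(\cdot,\nu)$, using $\kappa+c_1>0$, and absorbing the linear term by Young's inequality makes $\bar\cE$ bounded below and gives $\bar\cE\ge\eta\,d^2(\cdot,\nu)+\tilde c_2$ with $\eta>0$ (any $\eta<\tfrac{\kappa+c_1}{2}$ is reached this way, and $\eta=\tfrac{\kappa+c_1}{2}$ itself upon appealing to the sharper a priori estimates of \cite{MuSa20}); one finally fixes $c_2$ so that $\inf_E\bar\cE=0$, which is \eqref{eq:lower bound on energy}. The main obstacle is precisely this lower bound — with the exact constant $\tfrac{\kappa+c_1}{2}$ and for $\nu$ possibly outside $\cD(\cE)$ — all remaining items being essentially quotations from \cite{MuSa20}; note that in the proof of Theorem~\ref{theorem:comparison_principle_tildeoperators} only $\nu=\nu_0\in\cD(\cE)$ is used and only positivity of the leading coefficient matters, so there these subtleties are immaterial.
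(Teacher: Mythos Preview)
Your approach is essentially the same as the paper's: both derive every item from the established theory of $EV\!I_\kappa$ flows in \cite{MuSa20}. The paper's proof is in fact purely citation-based (each item is assigned a specific equation/theorem number from \cite{MuSa20}, with \cite{CaSi04} invoked once for local absolute continuity of $t\mapsto\cE(\mu(t))$), while you unpack the arguments in more detail; your sketches for \ref{item: EVI energy identity}, \ref{item:domainIdense}, \ref{item:Irightcontinuous} and \ref{item: EVI contraction} are correct and match the content of the cited results.

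Your observation about \ref{item:EVI quadratic lower bound} is well taken and worth flagging: the slope inequality combined with Young's inequality yields $\bar\cE(\pi)\ge \eta\,d^2(\pi,\nu)+\tilde c_2$ for any $\eta<\tfrac{\kappa+c_1}{2}$, and the stated equality of the leading coefficient with $\tfrac{\kappa+c_1}{2}$ does not follow from this alone (indeed, it fails already for $\cE(x)=\tfrac{\kappa}{2}x^2-x$ on $\bR$). The paper simply cites \cite[Thm 3.5, Eq.\ (3.15)]{MuSa20} for this item, and, as you correctly note, the only use made of \ref{item:EVI quadratic lower bound} in the proof of Theorem~\ref{theorem:comparison_principle_tildeoperators} is the existence of \emph{some} positive leading coefficient with $\nu=\nu_0\in\cD(\cE)$, which your argument provides.
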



\begin{proof}

We begin by observing that under the current hypothesis the triplet $(E,d,\cE)$ is a metric-functional system in the sense of \cite[Eq 3.1]{MuSa20}. This allows us to deduce most of the results we need to prove from Theorem 3.5 therein. 
\smallskip

Item \ref{item:EVI quadratic lower bound} is proven at \cite[Thm 3.5]{MuSa20}, see Eq (3.15) and the discussion surrounding its proof.

\smallskip

For the proof of \ref{item: EVI energy identity}, note that \cite[Thm 3.5, Eq 3.11]{MuSa20} and \cite[Theorem 2.1.7]{CaSi04} imply that $t \mapsto \cE(\mu(t))$ is locally Lipschitz and, hence, absolutely continuous on $(0,\infty)$. By \cite[Thm 3.5, Eq 3.17]{MuSa20} and the monotone convergence theorem, we obtain \ref{eq_energy_identity}.

\smallskip

Item \ref{item:domainIdense} follows from \cite[Thm 2.10]{MuSa20} and Assumption \ref{assumption:distance_and_energy}, item \ref{item:Irightcontinuous} follows from \cite[Thm 3.5, Eq 3.11 and Eq 3.12]{MuSa20} and item \ref{item: EVI contraction} from \cite[Thm 3.5, Eq 3.10]{MuSa20}. \end{proof}

\subsection{Properties of the Tataru distance}\label{appendixTataru}
We develop here the key results that hold for our adjusted Tataru distance. First of all, note that the infimum in the definition is attained.

{ \begin{remark}
    
Since  the gradient flow   (thanks to Assumption \ref{assumption:gradientflow}) and the distance $d$ are  continuous then the $\inf$ is attained.

Indeed, for all  $\mu,\nu\in E$, we have 
$0\leq d_T(\mu,\nu)\leq 0+ d(\mu, \nu(0) )=d(\mu, \nu )$. Let $(t_n)_{n\in \mathbb N}\in [0,+\infty)$ be a minimizing sequence, i.e.
$$
\lim_{n\to +\infty}t_n+ e^{\hat{\kappa}  t_n} d(\mu,\nu(t_n)) = d_T(\mu,\nu).
$$

Then, for all $n\in\mathbb N$ we have 
$$
0\leq t_n+ e^{\hat{\kappa}  t_n} d(\mu,\nu(t_n))\leq d(\mu, \nu ),
$$
hence $0\leq t_n\leq d(\mu, \nu )$ and $(t_n)_{n\in \mathbb N}$ is a bounded sequence. Passing to a subsequence, still called $(t_n)_{n\in \mathbb N}$ by an abuse of notation, we have $\lim_{n\to +\infty}t_n= \bar t$ for a $\bar t\geq 0$. 

Being the gradient flow $\nu(\cdot)$ and $d$  continuous we also have $$\lim_{n\to +\infty}e^{\hat{\kappa}  t_n} d(\mu,\nu(t_n))= e^{\hat{\kappa}  \bar t} d(\mu,\nu(\bar t)).$$ 

Therefore we must have 
$$
d_T(\mu,\nu)=\bar t + e^{\hat{\kappa} \bar t} d(\mu,\nu(\bar t)).
$$
\end{remark}
}

Secondly, we note that the EVI inequality \eqref{item:ass_EVI} leads to the control on the growth of the distance along two solutions of the gradient flow.

	

\begin{lemma} \label{lemma:estimates_Tataru}
	We have for all $\mu,\hat{\mu}, \nu,\hat{\nu}\in E$ and $r > 0$ that
	\begin{enumerate}
		\item [(a)]
		\begin{equation*}
		d_T(\mu,\nu) - d_T(\hat{\mu},\hat{\nu}) \leq d(\mu,\hat{\mu}) + d(\nu,\hat{\nu})
		\end{equation*}
		\item [(b)]
		\begin{equation*}
		\frac{d_T(\nu(r),\hat{\nu}) - d_T(\nu,\hat{\nu})}{r} \leq 1.
		\end{equation*}
	\end{enumerate}
\end{lemma}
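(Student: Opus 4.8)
The plan is to prove the two statements directly from the definition $d_T(\mu,\nu) = \inf_{t\geq 0}\{t + e^{\hat\kappa t} d(\mu,\nu(t))\}$, exploiting that the infimum is attained (as recorded in the Remark preceding the lemma) and the distance-contraction estimate $d(\nu(t),\hat\nu(t)) \leq e^{-\kappa t} d(\nu,\hat\nu)$ from Lemma \ref{lem:EVI_properties}\ref{item: EVI contraction}, together with the triangle inequality for $d$.

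For part (a): fix $\mu,\hat\mu,\nu,\hat\nu$ and let $t^\star \geq 0$ attain the infimum defining $d_T(\hat\mu,\hat\nu)$, so that $d_T(\hat\mu,\hat\nu) = t^\star + e^{\hat\kappa t^\star} d(\hat\mu,\hat\nu(t^\star))$. Using this same $t^\star$ as a competitor in the infimum defining $d_T(\mu,\nu)$ gives $d_T(\mu,\nu) \leq t^\star + e^{\hat\kappa t^\star} d(\mu,\nu(t^\star))$. Subtracting, and using $e^{\hat\kappa t^\star}\leq 1$ (since $\hat\kappa\leq 0$) together with two applications of the triangle inequality,
\begin{align*}
d_T(\mu,\nu) - d_T(\hat\mu,\hat\nu) &\leq e^{\hat\kappa t^\star}\bigl(d(\mu,\nu(t^\star)) - d(\hat\mu,\hat\nu(t^\star))\bigr) \\
&\leq e^{\hat\kappa t^\star}\bigl(d(\mu,\hat\mu) + d(\nu(t^\star),\hat\nu(t^\star))\bigr) \\
&\leq d(\mu,\hat\mu) + e^{\hat\kappa t^\star} e^{-\kappa t^\star} d(\nu,\hat\nu).
\end{align*}
The point now is that $\hat\kappa = 0\wedge\kappa$, so $\hat\kappa - \kappa = (0\wedge\kappa) - \kappa = (-\kappa)\vee 0 \geq 0$; hence $e^{(\hat\kappa-\kappa)t^\star}\geq 1$, which looks like the wrong direction. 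This is the delicate point: one must instead keep the factor $e^{\hat\kappa t^\star}$ attached to $d(\mu,\hat\mu)$ rather than bounding it by $1$ prematurely — that is, bound $d(\mu,\nu(t^\star)) \leq d(\mu,\hat\mu) + d(\hat\mu,\hat\nu(t^\star)) + d(\hat\nu(t^\star),\nu(t^\star))$ and then use $e^{\hat\kappa t^\star} d(\hat\nu(t^\star),\nu(t^\star)) \leq e^{\hat\kappa t^\star} e^{-\kappa t^\star} d(\hat\nu,\nu)$. If $\kappa\geq 0$ then $\hat\kappa = 0$ and the exponent is $-\kappa t^\star\leq 0$, giving the bound directly. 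If $\kappa<0$ then $\hat\kappa=\kappa$ and the exponent is $0$, again giving $d(\hat\nu,\nu)$ exactly. Either way $e^{\hat\kappa t^\star}e^{-\kappa t^\star}\leq 1$, and since also $e^{\hat\kappa t^\star}\leq 1$ we conclude $d_T(\mu,\nu) - d_T(\hat\mu,\hat\nu) \leq d(\mu,\hat\mu) + d(\nu,\hat\nu)$, which is (a).

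For part (b): fix $\nu,\hat\nu$ and $r>0$, and let $t^\star\geq 0$ attain the infimum for $d_T(\nu,\hat\nu)$, so $d_T(\nu,\hat\nu) = t^\star + e^{\hat\kappa t^\star} d(\nu,\hat\nu(t^\star))$. The key observation is the semigroup property $\nu(r)(s) = \nu(r+s)$ from \eqref{ass:semigroup ppty and continuity}: evaluating the gradient flow started at $\nu(r)$ at time $s$ gives $\nu(r+s)$. Using $s = t^\star$ (which requires $t^\star$, but in fact the cleaner competitor is to recall $\nu(r)(t) = \nu(r+t)$ and pick the time shift appropriately) — concretely, take the competitor time $t^\star$ in the infimum defining $d_T(\nu(r),\hat\nu)$ only after noting we want to relate $\nu(r)$'s flow to $\nu$'s flow. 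Actually the clean argument: in the infimum for $d_T(\nu(r),\hat\nu)$ we may use any $t\geq 0$; but the relevant flow is that of $\hat\nu$, and $\nu(r)$ is just a point, so $d_T(\nu(r),\hat\nu) \leq t^\star + e^{\hat\kappa t^\star} d(\nu(r),\hat\nu(t^\star))$. This does not obviously help. The correct route uses the flow on the \emph{first} argument implicitly via part (a) is unavailable; instead one should use that $d_T(\cdot,\hat\nu)$ decreases along the flow of $\cE$ started at the first argument. Precisely: write, using the competitor $t = s+r$ is not available since the infimum is over the flow of $\hat\nu$. I would therefore argue as follows. By definition $d_T(\nu(r),\hat\nu) = \inf_{t\geq 0}\{t + e^{\hat\kappa t}d(\nu(r),\hat\nu(t))\}$. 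For the flow started at $\hat\nu$ I cannot shift; so instead I use that the flow is on $\nu$: this suggests rewriting $d_T$ symmetrically is impossible. The honest statement is that $d_T(\nu(r),\hat\nu)\leq r + e^{\hat\kappa r} d_T(\nu,\hat\nu)$ would give the bound after rearranging only if $\hat\kappa\leq 0$... Let me instead take the time-$r$-shift inside: pick $t^\star$ attaining $d_T(\nu,\hat\nu)$; I claim the competitor for $d_T(\nu(r),\hat\nu)$ should involve running $\hat\nu$'s flow, and one uses $d(\nu(r),\hat\nu(t)) $ together with...

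\textbf{Corrected approach to (b).} The intended argument exploits the flow on the \emph{first} slot. Observe that by the semigroup property, for $s\geq 0$ the gradient flow started at $\nu(r)$ is $t\mapsto \nu(r+t)$. Now for any $t\geq 0$,
\begin{align*}
t + e^{\hat\kappa t} d(\nu(r),\hat\nu(t)) &\geq (t + r) - r + e^{\hat\kappa t} d(\nu(r),\hat\nu(t)).
\end{align*}
This is not yet it. The real mechanism: one shows $d_T(\nu(r),\hat\nu) \leq d_T(\nu,\hat\nu)$ is false in general; what is true and needed is the stated $(d_T(\nu(r),\hat\nu)-d_T(\nu,\hat\nu))/r\leq 1$. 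I would prove it by: letting $t^\star$ attain $d_T(\nu,\hat\nu)$, using the competitor $t^\star$ itself for $d_T(\nu(r),\hat\nu)$ combined with $d(\nu(r),\hat\nu(t^\star)) \leq d(\nu(r),\nu(t^\star+r))$-type bounds is awkward; the genuinely clean route, and the one I expect the authors use, is to note that since the $\inf$ defining $d_T(\nu,\hat\nu)$ is over times $t\geq 0$, and $\nu(r)$'s own flow reaches $\nu(r+t)$, we have
\[
d_T(\nu(r),\hat\nu) \;\le\; \inf_{t\ge 0}\Bigl\{t + e^{\hat\kappa t}d(\nu(r),\hat\nu(t))\Bigr\},
\]
and comparing with $d_T(\nu,\hat\nu) = \inf_{s\ge 0}\{s + e^{\hat\kappa s} d(\nu,\hat\nu(s))\}$ via $s = t$, using the triangle inequality $d(\nu(r),\hat\nu(t))\le d(\nu(r),\nu)+d(\nu,\hat\nu(t))$ together with the EVI a-priori bound that $d(\nu(r),\nu)$ grows at most linearly in $r$ near $r=0$ — but this only gives a $\limsup$ statement, not the clean $\leq 1$ for all $r>0$. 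Given the constraints, I will present part (b) as: for fixed $r>0$, choosing $t^\star$ optimal for $d_T(\nu,\hat\nu)$ and using the competitor $\max(t^\star - r,0)$... this is getting long.

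\textbf{I expect the main obstacle} to be exactly this bookkeeping with the exponential weight $e^{\hat\kappa t}$ in part (a) — making sure the contraction exponent $-\kappa$ and the weight exponent $\hat\kappa$ combine to a nonpositive number $\hat\kappa - \kappa \le 0$, which holds since $\hat\kappa = 0\wedge\kappa \le \kappa$ — and in part (b) the careful use of the semigroup property to reparametrize the infimum, bounding $d_T(\nu(r),\hat\nu)$ by using the time-$t$ point of $\hat\nu$'s flow against $\nu(r)$ and absorbing the shift $r$ into the "$t$" budget, so that the $+r$ and the change in the exponential weight are both controlled by $r$ times a constant $\le 1$. The distance terms themselves are handled routinely by the triangle inequality; all the subtlety is in the exponents.
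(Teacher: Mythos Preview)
Your argument for part (a) is essentially the paper's argument, and after your momentary sign confusion (note that $(0\wedge\kappa)-\kappa=(-\kappa)\wedge 0\le 0$, not $(-\kappa)\vee 0$; your subsequent case analysis recovers the correct inequality $\hat\kappa-\kappa\le 0$) you reach the right conclusion by the same route.

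For part (b), however, you never locate the key idea and the proposal does not close. The competitor in the infimum for $d_T(\nu(r),\hat\nu)$ is neither $t^\star$ nor $\max(t^\star-r,0)$ but $t^\star+r$. With $t^\star$ optimal for $d_T(\nu,\hat\nu)$,
\[
d_T(\nu(r),\hat\nu)\;\le\;(t^\star+r)+e^{\hat\kappa(t^\star+r)}\,d\bigl(\nu(r),\hat\nu(t^\star+r)\bigr).
\]
Now invoke the semigroup property \eqref{ass:semigroup ppty and continuity} in the form $\hat\nu(t^\star+r)=S[\hat\nu(t^\star)](r)$ and the contraction \eqref{lemma:distance_contracting_under_gradient_flow} applied to the pair $\nu,\,\hat\nu(t^\star)$ flowed for time $r$:
\[
d\bigl(\nu(r),\hat\nu(t^\star+r)\bigr)\;\le\;e^{-\kappa r}\,d\bigl(\nu,\hat\nu(t^\star)\bigr).
\]
Combining, and using $\hat\kappa-\kappa\le 0$,
\[
d_T(\nu(r),\hat\nu)\;\le\;r+t^\star+e^{\hat\kappa t^\star}e^{(\hat\kappa-\kappa)r}d\bigl(\nu,\hat\nu(t^\star)\bigr)\;\le\;r+t^\star+e^{\hat\kappa t^\star}d\bigl(\nu,\hat\nu(t^\star)\bigr)\;=\;r+d_T(\nu,\hat\nu),
\]
which is exactly (b). The point you were circling around --- ``absorbing the shift $r$ into the $t$ budget'' --- is made precise by running \emph{both} flows forward by $r$ (the second one starting from $\hat\nu(t^\star)$), so that the contraction estimate applies and the exponential weights telescope. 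Your attempts to shift only one side, or to invoke a linear bound on $d(\nu(r),\nu)$, do not give the uniform-in-$r$ inequality that the lemma asserts.
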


\begin{proof}

	 For (a) Let $t\in [0,+\infty)$ be optimal for $d_T(\hat{\mu},\hat{\nu})$, i.e.  
	\begin{equation*}
	    d_T(\hat{\mu},\hat{\nu})= t + e^{\hat{\kappa} t} d(\hat{\mu},\hat{\nu}(t)).
	\end{equation*}
    Then, we have
	\begin{align*}
	d_T(\mu,\nu) - d_T(\hat{\mu},\hat{\nu}) & \leq e^{\hat{\kappa} t} d(\mu,\nu(t)) - e^{\hat{\kappa} t} d(\hat{\mu},\hat{\nu}(t))  \\
	& \leq e^{\hat{\kappa} t}\left[  d(\mu,\hat{\mu}) + d(\hat{\mu},\nu(t)) - d(\hat{\mu},\hat{\nu}(t)) \right]  \\
	& \leq e^{\hat{\kappa} t}d(\mu,\hat{\mu}) + e^{\hat{\kappa} t} d(\nu(t),\hat{\nu}(t))  \\
	& \leq e^{\hat{\kappa} t}d(\mu,\hat{\mu}) +  e^{ (\hat \kappa-\kappa)t}d(\nu,\hat{\nu}) \\
	& \leq d(\mu,\hat{\mu}) + d(\nu,\hat{\nu}),
	\end{align*}
	where in line 4 we use equation \eqref{lemma:distance_contracting_under_gradient_flow}, in line 5 we use that $\hat{\kappa} \leq 0$ and $\hat{\kappa} - \kappa \leq 0$.
	
	For (b), let $t\in [0,+\infty)$ be optimal for $d_T(\nu,\hat{\nu})$. Then working with the sub-optimal $t+r$ for the first term, we obtain
	\begin{align*}
	\frac{d_T(\nu(r),\hat{\nu}) - d_T(\nu,\hat{\nu})}{r} & \leq \frac{e^{(t+r)\hat\kappa}d(\nu(r),\hat \nu(t+r)) + t+r - e^{t \hat\kappa}d(\nu,\hat \nu(t)) -t }{r} \\
	& \leq \frac{e^{(t+r)\hat\kappa}d(\nu(r),\hat \nu(t+r)) - e^{\hat\kappa t} d(\nu,\hat \nu(t))}{r} + 1  \\
	& \leq e^{\hat\kappa  t} \frac{ d(\nu,\hat \nu(t)) - d(\nu,\hat \nu(t))}{r} + 1 \\
	&  \leq 1 
	\end{align*}
	by equation \eqref{lemma:distance_contracting_under_gradient_flow} and the fact that $\hat{\kappa} - \kappa \leq 0$, $\hat \kappa \leq 0$.

\end{proof}

\begin{lemma} \label{lemma:triangle_inequality_Tataru}
	For $\rho, \mu,\nu\in E$, we have
	\begin{equation*}
	d_T(\rho,\nu) \leq d_T(\rho,\mu) + d_T(\mu,\nu).
	\end{equation*}
\end{lemma}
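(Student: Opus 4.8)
The plan is to mimic the proof of the ordinary triangle inequality, using the semigroup property \eqref{ass:semigroup ppty and continuity} of the EVI gradient flow together with the contraction estimate \eqref{lemma:distance_contracting_under_gradient_flow} of Lemma~\ref{lem:EVI_properties}\ref{item: EVI contraction}. First I would invoke the Remark preceding Lemma~\ref{lemma:estimates_Tataru} to pick times $s,t\geq 0$ that are optimal for $d_T(\rho,\mu)$ and $d_T(\mu,\nu)$, i.e.
\begin{equation*}
    d_T(\rho,\mu)=s+e^{\hat\kappa s}d(\rho,\mu(s)),\qquad d_T(\mu,\nu)=t+e^{\hat\kappa t}d(\mu,\nu(t)).
\end{equation*}
(Alternatively one works with $\varepsilon$-optimal times and lets $\varepsilon\downarrow 0$; this changes nothing.) The idea is then to use $s+t$ as a competitor in the infimum defining $d_T(\rho,\nu)$.

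Using $s+t$ in the definition of $d_T(\rho,\nu)$ gives
\begin{equation*}
    d_T(\rho,\nu)\leq (s+t)+e^{\hat\kappa(s+t)}d(\rho,\nu(s+t)).
\end{equation*}
By the semigroup property, $\nu(s+t)=(\nu(t))(s)$, the value at time $s$ of the gradient flow issued from $\nu(t)$. Inserting the intermediate point $\mu(s)$ and using the triangle inequality for $d$ together with \eqref{lemma:distance_contracting_under_gradient_flow} applied to the flows started at $\mu$ and at $\nu(t)$,
\begin{equation*}
    d(\rho,\nu(s+t))\leq d(\rho,\mu(s))+d\bigl(\mu(s),(\nu(t))(s)\bigr)\leq d(\rho,\mu(s))+e^{-\kappa s}d(\mu,\nu(t)).
\end{equation*}

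It remains to absorb the exponential weights, which is where the modified weight $e^{\hat\kappa t}$ pays off. Since $\hat\kappa\leq 0$ we have $e^{\hat\kappa(s+t)}d(\rho,\mu(s))\leq e^{\hat\kappa s}d(\rho,\mu(s))$; and since $\hat\kappa=0\wedge\kappa\leq\kappa$ we have $-\kappa\leq-\hat\kappa$, hence $e^{-\kappa s}\leq e^{-\hat\kappa s}$ for $s\geq 0$, so
\begin{equation*}
    e^{\hat\kappa(s+t)}e^{-\kappa s}d(\mu,\nu(t))\leq e^{\hat\kappa(s+t)}e^{-\hat\kappa s}d(\mu,\nu(t))=e^{\hat\kappa t}d(\mu,\nu(t)).
\end{equation*}
Adding the two contributions yields $d_T(\rho,\nu)\leq s+e^{\hat\kappa s}d(\rho,\mu(s))+t+e^{\hat\kappa t}d(\mu,\nu(t))=d_T(\rho,\mu)+d_T(\mu,\nu)$, as claimed. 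There is no real obstacle here; the only delicate point is the bookkeeping with the exponential factors, which closes precisely because $\hat\kappa\leq 0$ and $\hat\kappa\leq\kappa$, the two properties for which the weight $e^{\hat\kappa t}$ was introduced in the definition of the Tataru distance.
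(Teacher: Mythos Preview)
Your proof is correct and follows essentially the same approach as the paper: both use the semigroup property, the triangle inequality for $d$, the contraction estimate \eqref{lemma:distance_contracting_under_gradient_flow}, and the two inequalities $\hat\kappa\leq 0$, $\hat\kappa\leq\kappa$ to handle the exponential weights. The only cosmetic difference is that the paper keeps the infimum throughout rather than first fixing optimal times, and swaps the roles of $s$ and $t$ (inserting $\mu(t)$ instead of your $\mu(s)$).
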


\begin{proof}
	We have
	\begin{align*}
	d_T(\rho,\nu) & = \inf_{t \geq 0} \left\{t + e^{\hat\kappa t} d(\rho,\nu(t)) \right\} \\
	& = \inf_{t,s \geq 0} \left\{t+s + e^{\hat\kappa (t+s)} d(\rho,\nu(t+s)) \right\} \\
	& \leq \inf_{t,s \geq 0} \left\{t+s + e^{\hat\kappa (t+s)}d(\rho,\mu(t)) + e^{\hat\kappa(t+s)}d(\mu(t),\nu(t+s)) \right\}. 
	\end{align*}
	We now use that, as $\hat\kappa \leq 0$ we have $e^{\hat\kappa(t+s)}d(\rho,\mu(t)) \leq e^{\hat\kappa t}d(\rho,\mu(t))$. For the term $e^{\hat \kappa(t+s)}d(\mu(t),\nu(t+s))$ we use equation \eqref{lemma:distance_contracting_under_gradient_flow} and the fact that $\hat{\kappa} - \kappa \leq 0$. This yields
	\begin{align*}
    d_T(\rho,\nu)& \leq \inf_{t,s \geq 0} \left\{t+s + e^{\hat\kappa t}d(\rho,\mu(t)) + e^{\hat\kappa s}d(\mu,\nu(s)) \right\} \\
	& \leq d_T(\rho,\mu) + d_T(\mu,\nu).
	\end{align*}
\end{proof}

\section{Examples}\label{sec: examples}

In this section, we treat three key examples:
\begin{itemize}
    \item Hilbert spaces, in particular in the context where $\cE$ is derived from a Dirichlet energy. This includes e.g. the linearly controlled Allen-Cahn equation.
    \item Finite dimensional spaces that are essentially Riemannian manifolds.
    \item The Wasserstein space $\cP_2(\bR^d)$.
\end{itemize}

In all the examples, the first step is the verification that the metric space satisfies Assumption \ref{assumption:distance_and_energy} and that there exists a gradient flow satisfying \eqref{item:ass_EVI}. 

We will argue this final point starting from $\kappa$-convexity of the functional $\cE$, see Definition \ref{definition:kappa_convex} below. In concrete examples, this property is typically easier to verify, and is strongly related to \eqref{item:ass_EVI}. Indeed, $\kappa$-convexity of $\cE$ is implied by the existence of a gradient flow satisfying \eqref{item:ass_EVI} by a result of  \cite{DaSa08}. The other implication is not established in general, but includes an extensive list of relevant examples, see the discussion in Section 3.4 of \cite{MuSa20}. For our first two examples we will argue via this route, while for the final example, we will use the methods of \cite{AmGiSa08} based on the $\kappa$-convexity of $\cE$ along generalized geodesics.

\begin{definition} \label{definition:kappa_convex}
Let $\kappa \in \bR$. We say that a lower semi-continuous functional $\cE : E \rightarrow \bR \cup \{\infty\}$ is $\kappa$-convex on a curve $\gamma : [0,1] \rightarrow \cD(\cE)$ if it satisfies for all $t \in [0,1]$ the inequality
\begin{equation*}
\cE\left(\gamma(t)\right) \leq (1-t)\cE(\gamma(0))) + t \cE(\gamma(1))) - \frac{\kappa}{2} t(1-t)d^2(\gamma(0),\gamma(1)).
\end{equation*}
If for any two points $\rho,\pi \in \cD(\cE)$, there exists a constant speed geodesic $\geo{\rho}{\pi}$ such that $\cE$ is $\kappa$-convex on $\geo{\rho}{\pi}$, then we call $\cE$ $\kappa$-convex. If $\cE$ is $\kappa$-convex on all geodesics, then we call $\cE$ strongly $\kappa$-convex. 
\end{definition} 

\begin{theorem}[Theorem 3.2 \cite{DaSa08}] \label{theorem:EVI_implies_convex}
    Consider a lower semi-continuous functional $\cE : E \rightarrow \bR \cup \{\infty\}$ on a geodesic space $(E, d)$ such that there exist a gradient flow satisfying \eqref{item:ass_EVI}. Then $\cE$ is strongly $\kappa$-convex. 
\end{theorem}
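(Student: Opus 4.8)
This is exactly Theorem~3.2 of \cite{DaSa08}; we outline the strategy one would follow, using only the consequences of \ref{item:ass_EVI} collected in Lemma~\ref{lem:EVI_properties}.

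\emph{Reductions.} Fix $\rho,\pi\in E$. If $\cE(\rho)=+\infty$ or $\cE(\pi)=+\infty$ the defining inequality of $\kappa$-convexity holds trivially along any geodesic, so we may assume $\rho,\pi\in\cD(\cE)$. Let $\geo{\rho}{\pi}$ be an \emph{arbitrary} constant-speed geodesic between them, put $L:=d(\rho,\pi)$ and $\phi(s):=\cE(\geo{\rho}{\pi}(s))$. Since $\phi$ is lower semi-continuous on $[0,1]$ (composition of the l.s.c.\ $\cE$ with the continuous geodesic) while $s\mapsto (1-s)\cE(\rho)+s\cE(\pi)-\tfrac{\kappa}{2}s(1-s)L^2$ is continuous, it suffices to prove the \emph{midpoint} inequality
\[
\cE\!\left(\geo{\rho}{\pi}(\tfrac12)\right)\le \tfrac12\cE(\rho)+\tfrac12\cE(\pi)-\tfrac{\kappa}{8}L^2
\]
for \emph{every} pair of endpoints: applying it to the sub-geodesics of $\geo{\rho}{\pi}$ gives the inequality at all dyadic parameters, and lower semi-continuity of $\phi$ then extends it to all $s\in[0,1]$ (this also shows $\cE$ is finite along the geodesic).

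\emph{Core estimate.} Write $m:=\geo{\rho}{\pi}(\tfrac12)$ and let $m(\cdot)=S[m](\cdot)$ be the gradient flow from $m$. Applying \ref{item:ass_EVI} with target $\rho$, resp.\ $\pi$, adding the two inequalities and using $d(m,\rho)=d(m,\pi)=L/2$, one bounds the upper right derivative at $t=0$ of $\psi(t):=d^2(m(t),\rho)+d^2(m(t),\pi)$ by $2\cE(\rho)+2\cE(\pi)-4\cE(m)-\tfrac{\kappa}{2}L^2$, while $\psi(0)=\tfrac12 L^2$. On the other hand $\cE(m(t))\to\cE(m)$ as $t\downarrow0$ by the energy identity of Lemma~\ref{lem:EVI_properties}\,\ref{item: EVI energy identity}, and by the triangle inequality (together with the contraction estimate of Lemma~\ref{lem:EVI_properties}\,\ref{item: EVI contraction}) one has $\psi(t)\ge \tfrac12L^2-2L\,d(m,m(t))$, so the lower right derivative of $\psi$ at $0$ is at least $-2L$ times the metric speed of $m(\cdot)$ at $0^+$, a finite quantity comparable to $\sqrt{I(m)}$. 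Since the lower right derivative never exceeds the upper right derivative, comparing the two yields
\[
\cE(m)\le \tfrac12\cE(\rho)+\tfrac12\cE(\pi)-\tfrac{\kappa}{8}L^2+\tfrac{L}{2}\,|\partial\cE|(m),
\]
i.e.\ the midpoint inequality \emph{up to the error term} $\tfrac{L}{2}|\partial\cE|(m)$.

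\emph{Main obstacle.} Removing this error term is the heart of the matter and the reason the statement is a theorem rather than a short computation: the steepest-descent direction of the flow at $m$ is, in general, transverse to $\geo{\rho}{\pi}$, so the crude triangle-inequality lower bound for $\psi$ overestimates the first-order displacement of $m(t)$ and does not see the cancellation one expects along a geodesic (and which would be automatic in a non-branching space). In \cite[Thm.~3.2]{DaSa08} this gap is closed by working with the \emph{integrated} form of \ref{item:ass_EVI} and by comparing the flow issued from $m$ with the flows issued from $\rho$ and $\pi$ \emph{simultaneously}, so that the transverse contributions are controlled by the $e^{-\kappa t}$-contraction estimate, and then letting the flow time tend to $0$. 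As this is carried out in full in \cite{DaSa08} under hypotheses weaker than Assumptions~\ref{assumption:distance_and_energy}--\ref{assumption:gradientflow}, we do not reproduce it here.
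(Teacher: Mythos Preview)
The paper does not give a proof of this statement at all: it is simply quoted as Theorem~3.2 of \cite{DaSa08}, with no argument supplied. Your proposal is consistent with that, since you too ultimately defer to \cite{DaSa08} for the decisive step.

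That said, your sketch contains a genuine gap beyond the one you flag. Your ``core estimate'' presupposes that the midpoint $m=\geo{\rho}{\pi}(\tfrac12)$ lies in $\cD(\cE)$ (and even in $\cD(|\partial\cE|)$): otherwise $\cE(m(t))\to\cE(m)$ via the energy identity is not available, and the error term $\tfrac{L}{2}|\partial\cE|(m)$ is $+\infty$, making the displayed inequality vacuous. You cannot deduce $m\in\cD(\cE)$ from the dyadic argument, since that argument \emph{uses} the midpoint inequality and is therefore circular at this stage. The actual proof in \cite{DaSa08} avoids this by working with the integrated EVI on $(0,t]$ (where $\cE(m(s))<\infty$ for $s>0$ is guaranteed) and passing to the limit, rather than differentiating at $t=0$; this is part of the same ``integrated form'' maneuver you allude to in your final paragraph, but it is needed already to make the core estimate meaningful, not only to remove the error term.
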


Therefore, in all examples below, we can outright assume that we are working with a $\kappa$-convex functional. In this context, the following proposition simplifies establishing Assumption \ref{assumption:regularized_geodesics}.

\begin{proposition} \label{proposition:energy_C1}
Consider the context of Assumption \ref{assumption:distance_and_energy}.
Consider $\rho,\pi$ such that $I(\rho) + \cE(\pi) < \infty$ and let $\geo{\rho}{\pi}$ be the constant speed geodesic between $\rho$ and $\pi$.

Suppose that for each $\theta > 0$ there is a curve $(\geo{\rho}{\pi}_\theta(t))_{t \in [0,1]}$, $\geo{\rho}{\pi}_\theta(0) = \rho$, $\geo{\rho}{\pi}_\theta(t) \neq \rho$ if $t \in [0,1]$ such that:
\begin{enumerate}[(a)]
    \item \label{item:prop:energy_C1_convex} $\cE$ is $\kappa$-convex along $\geo{\rho}{\pi}_\theta$,
    \item \label{item:prop:energy_C1_angle}  the angle condition \eqref{eq: angle condition} holds:
    \begin{equation*}
        \limsup_{t \downarrow 0}   \frac{d(\geo{\rho}{\pi}_\theta(t),\geo{\rho}{\pi}(t))}{t} \leq \theta,
    \end{equation*}
    \item \label{item:prop:energy_C1_key_limit} for all $t$ we have $\geo{\rho}{\pi}_\theta(t) \in \cD(|\partial \cE|)$ and 
    \begin{equation*}
        \lim_{t \downarrow 0} |\partial \cE| (\geo{\rho}{\pi}_\theta(t)) = |\partial \cE| (\rho).
    \end{equation*}
\end{enumerate}
Then Assumption \ref{assumption:regularized_geodesics} holds.
\end{proposition}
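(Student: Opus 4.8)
Fix $\theta>0$ and write $\gamma:=\geo{\rho}{\pi}_\theta$ for the curve supplied by the hypothesis and $\zeta:=\geo{\rho}{\pi}$ for the constant speed geodesic. It suffices, for this $\theta$, to produce a time $\tau>0$ and a curve on $[0,\tau]$ satisfying \eqref{eq: angle condition} and \eqref{eq: energy directional derivative}; I would simply take $\tau=1$ and the curve $\gamma$ itself. Then \eqref{eq: angle condition} is exactly hypothesis \ref{item:prop:energy_C1_angle}, so the whole work is in verifying \eqref{eq: energy directional derivative}, i.e. that $\liminf_{t\downarrow0}t^{-1}\big(\cE(\gamma(t))-\cE(\rho)\big)\le |\partial\cE|(\rho)\,(d(\rho,\pi)+\theta)$. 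Note that $\rho\in\cD(\cE)$ (since $I(\rho)<+\infty$) and $\gamma(t)\in\cD(|\partial\cE|)\subseteq\cD(\cE)$ by \ref{item:prop:energy_C1_key_limit}, so the difference quotient is finite for every $t>0$.

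\textbf{Step 1 (distance to $\rho$).} Using the geodesic identity $d(\rho,\zeta(t))=t\,d(\rho,\pi)$, the triangle inequality gives $d(\rho,\gamma(t))\le t\,d(\rho,\pi)+d(\zeta(t),\gamma(t))$. Dividing by $t$ and invoking \ref{item:prop:energy_C1_angle} yields $\limsup_{t\downarrow0}t^{-1}d(\rho,\gamma(t))\le d(\rho,\pi)+\theta$; in particular $d(\rho,\gamma(t))\to0$ as $t\downarrow0$.

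\textbf{Step 2 (global slope bound) and conclusion.} Here I would use that $\cE$ is $\kappa$-convex (the standing situation in Section \ref{sec: examples} where this proposition is applied, and the content underlying hypothesis \ref{item:prop:energy_C1_convex}) to upgrade the \emph{local} slope to a global comparison: for any $v\in\cD(|\partial\cE|)$ and $w\in\cD(\cE)$, picking a geodesic $\eta$ from $v$ to $w$ along which $\cE$ is $\kappa$-convex (Definition \ref{definition:kappa_convex}) and rearranging the convexity inequality into
\begin{equation*}
\frac{\cE(v)-\cE(\eta(s))}{d(v,\eta(s))}\ \ge\ \frac{\cE(v)-\cE(w)}{d(v,w)}+\frac{\kappa}{2}(1-s)\,d(v,w),
\end{equation*}
one lets $s\downarrow0$ and bounds the left-hand side by $|\partial\cE|(v)$ to get $\cE(v)-\cE(w)\le |\partial\cE|(v)\,d(v,w)+\tfrac{\kappa^-}{2}d^2(v,w)$ with $\kappa^-:=\max\{-\kappa,0\}$. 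Applying this with $v=\gamma(t)$ and $w=\rho$, dividing by $t$, and taking $\limsup_{t\downarrow0}$, the slope term converges to $|\partial\cE|(\rho)\cdot\limsup_{t\downarrow0}t^{-1}d(\rho,\gamma(t))\le |\partial\cE|(\rho)(d(\rho,\pi)+\theta)$ by \ref{item:prop:energy_C1_key_limit} and Step 1, while the quadratic term is $\tfrac{\kappa^-}{2}\,d(\rho,\gamma(t))\cdot t^{-1}d(\rho,\gamma(t))\to0$ because $d(\rho,\gamma(t))\to0$ and $t^{-1}d(\rho,\gamma(t))$ stays bounded. Since $\liminf\le\limsup$, this is precisely \eqref{eq: energy directional derivative}, completing the proof.

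\textbf{Main obstacle.} The delicate point is Step 2. The quantity $|\partial\cE|(\gamma(t))$ is a pointwise object that, by definition, only controls decrements of $\cE$ in an \emph{uncontrolled} neighbourhood of $\gamma(t)$, so it cannot be chained back to $\rho$ — which sits at distance of order $t$ — by soft arguments; this is exactly where $\kappa$-convexity is indispensable, since it converts the local slope into the global estimate above. Hypothesis \ref{item:prop:energy_C1_key_limit} then plays the complementary role of transferring that estimate, evaluated at the moving point $\gamma(t)$, back to the base point $\rho$. One should also be mindful that, while hypothesis \ref{item:prop:energy_C1_convex} asserts $\kappa$-convexity of $\cE$ along $\gamma$ itself, the global slope bound is applied along geodesics joining $\gamma(t)$ to $\rho$; in every example treated in Section \ref{sec: examples} this is available because $\cE$ is $\kappa$-convex there, and this is the form in which the proposition is used.
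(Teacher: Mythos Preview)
Your proof is correct and follows the same approach as the paper's: both obtain the slope inequality $\cE(\gamma_\theta(t))-\cE(\rho)\le |\partial\cE|(\gamma_\theta(t))\,d(\gamma_\theta(t),\rho)-\tfrac{\kappa}{2}d^2(\gamma_\theta(t),\rho)$ from $\kappa$-convexity (the paper cites Proposition~2.4.9 of \cite{AmGiSa08} using hypothesis~\ref{item:prop:energy_C1_convex} directly), divide by $t$, control $t^{-1}d(\gamma_\theta(t),\rho)$ via the triangle inequality and the angle condition, and pass to the limit using~\ref{item:prop:energy_C1_key_limit}. The only nuance you flag---that your derivation of the slope inequality uses geodesic $\kappa$-convexity rather than hypothesis~\ref{item:prop:energy_C1_convex} alone---is accurate, but the paper's own remark immediately after the proposition makes the same point, and in all applications the curves $\gamma_\theta$ are geodesics along which $\cE$ is $\kappa$-convex by Theorem~\ref{theorem:EVI_implies_convex}.
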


\begin{remark}
    Consider the context in which the approximating curves $\geo{\rho}{\pi}_\theta$ are themselves geodesics. Then by Theorem \ref{theorem:EVI_implies_convex} we obtain that $\cE$ is $\kappa$-convex along geodesics implying \ref{item:prop:energy_C1_convex}.
\end{remark}

\begin{remark}
    In a range of contexts, one finds that $I = |\partial \cE|^2$ is convex along geodesics inside $\cD(|\partial \cE|)$. As $|\partial \cE|$ is always lower semi-continuous, this implies \ref{item:prop:energy_C1_key_limit}.
\end{remark}

\begin{proof}
By assumption \ref{item:prop:energy_C1_angle}, it suffices the establish \eqref{eq: energy directional derivative} for the curves $\geo{\rho}{\pi}_\theta$. Due to the $\kappa$-convexity of $\cE$ along $\geo{\rho}{\pi}_\theta$ given in \ref{item:prop:energy_C1_convex}, we can apply Proposition 2.4.9 in \cite{AmGiSa08} to obtain
\begin{equation*}
    d(\geo{\rho}{\pi}_\theta(t),\rho) |\partial \cE(\geo{\rho}{\pi}_\theta(t))| \geq \cE(\geo{\rho}{\pi}_\theta(t)) - \cE(\rho) + \frac{\kappa}{2} d^2(\geo{\rho}{\pi}_\theta(t),\rho).
\end{equation*}
Rewriting the inequality yields
\begin{align*}
    \frac{\cE(\geo{\rho}{\pi}_\theta(t)) - \cE(\rho)}{t} \leq \frac{d(\geo{\rho}{\pi}_\theta(t),\rho)}{t} |\partial \cE(\geo{\rho}{\pi}_\theta(t))| - \frac{\kappa}{2t}d^2(\geo{\rho}{\pi}_\theta(t),\rho).
\end{align*}
Using the triangle inequality, and the angle condition of \ref{item:prop:energy_C1_angle}, and that $\geo{\rho}{\pi}$ is a geodesic, we find
\begin{equation*}
    \limsup_{t \downarrow 0}  \frac{d(\geo{\rho}{\pi}_\theta(t),\rho)}{t} \leq \limsup_{t \downarrow 0}  \frac{d(\geo{\rho}{\pi}_\theta(t),\geo{\rho}{\pi}(t)) + d(\geo{\rho}{\pi}(t), \rho)}{t} \leq \theta + d(\rho,\pi).
\end{equation*}
Combining the two above equations, we have
\begin{multline*}
    \liminf_{t \downarrow 0} \frac{\cE(\geo{\rho}{\pi}_\theta(t)) - \cE(\rho)}{t} \leq \left(\theta + d(\rho,\pi)\right) \liminf_{t \downarrow 0} |\partial \cE(\geo{\rho}{\pi}_\theta(t))| \\
    \leq \left(\theta + d(\rho,\pi)\right) \liminf_{t \downarrow 0} |\partial \cE(\rho)| 
\end{multline*}
establishing the claim.
\end{proof}

\subsection{Hilbert spaces} \label{subsection:Hilbert_spaces}

In this subsection, we assume that $(E,d) = (\cH,\vn{\cdot})$ is a Hilbert space. Below we will verify our Assumptions in two examples, one treats linearly controlled Ornstein-Uhlenbeck type Hamiltonians on general Hilbert spaces, the other treats $L^2(\bR^d)$ with an energy that yields the solution to the Allen-Cahn equation as a gradient flow. For another example where our our methods apply see \cite{FeMiZi21}.

We start out with a general existence result for \eqref{item:ass_EVI}.

\begin{theorem}[Brezis-Pazy, Theorem 3.1 \cite{AmGi13}] \label{theorem:Brezis-Pazy}
Let $\cE$ be $\kappa$-convex and lower semi-continuous. Then there is a unique solution to \eqref{item:ass_EVI} for $\cE$.
\end{theorem}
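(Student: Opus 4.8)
The plan is to recognise this as the classical Brezis/Crandall--Liggett theory of gradient flows of convex functionals on Hilbert spaces, adapted to the $\kappa$-convex case. Since in a Hilbert space $(\cH,\vn{\cdot})$ the unique geodesic joining two points is the line segment, $\kappa$-convexity in the sense of Definition \ref{definition:kappa_convex} coincides with strong $\kappa$-convexity and is equivalent, via the parallelogram identity $\vn{(1-t)x+ty}^2=(1-t)\vn{x}^2+t\vn{y}^2-t(1-t)\vn{x-y}^2$, to plain convexity of $\psi:=\cE-\tfrac{\kappa}{2}\vn{\cdot}^2$. So the first step is to pass from $\cE$ to the proper, convex, lower semi-continuous functional $\psi$, whose subdifferential $\partial\psi$ is maximal monotone; then $\partial\cE=\partial\psi+\kappa\,\mathrm{Id}$ is $\kappa$-monotone, i.e.\ $\ip{\xi-\eta}{x-y}\geq\kappa\vn{x-y}^2$ whenever $\xi\in\partial\cE(x)$ and $\eta\in\partial\cE(y)$.

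Next I would invoke the implicit Euler (minimising movement) scheme: for $\tau>0$ with $1+\kappa\tau>0$, the resolvent $J_\tau:=(\mathrm{Id}+\tau\partial\cE)^{-1}$ is everywhere defined, single-valued and Lipschitz, being the minimiser of $y\mapsto\tfrac{1}{2\tau}\vn{y-x}^2+\cE(y)$; the Crandall--Liggett exponential formula $S(t)x=\lim_{n\to\infty}J_{t/n}^{n}x$ then produces a strongly continuous semigroup on $\overline{\cD(\cE)}$ such that, for every starting point, $t\mapsto S(t)x$ is locally Lipschitz on $(0,\infty)$, lies in $\cD(\partial\cE)$ for $t>0$ (the regularising effect), and satisfies $\tfrac{\dd^+}{\dd t}S(t)x=-\partial^0\cE(S(t)x)$ with $\partial^0\cE(\,\cdot\,)$ the element of minimal norm in $\partial\cE(\,\cdot\,)$. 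This is the substantial analytic input, and I expect it to be the main obstacle: it is standard, but its proof — convergence of the scheme, the a priori estimates, differentiability of the limit curve — is precisely the content of the Brezis/Crandall--Liggett theorem and would be quoted rather than reproved.

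With the semigroup in hand, verifying \eqref{item:ass_EVI} is short. For $x\in\cD(\cE)$, writing $u(t)=S(t)x$ and $\xi(t)=\partial^0\cE(u(t))\in\partial\cE(u(t))$, for any $\rho\in\cD(\cE)$ one has $\tfrac12\tfrac{\dd^+}{\dd t}\vn{u(t)-\rho}^2=-\ip{\xi(t)}{u(t)-\rho}$, and the subgradient inequality for the $\kappa$-convex functional $\cE$, namely $\cE(\rho)\geq\cE(u(t))+\ip{\xi(t)}{\rho-u(t)}+\tfrac{\kappa}{2}\vn{\rho-u(t)}^2$ (Proposition 2.4.9 in \cite{AmGiSa08}, already used in the proof of Proposition \ref{proposition:energy_C1}), turns this into exactly the EVI$_\kappa$ inequality with $d(x,y)=\vn{x-y}$. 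For a general starting point in $\overline{\cD(\cE)}$ one gets the inequality on $(0,\infty)$ from the regularising effect and continuity at $t=0$ from the strong continuity of $S$. Finally, for uniqueness: if $u,w$ both solve \eqref{item:ass_EVI} with the same initial value, then in the Hilbert setting each is an absolutely continuous solution of $\dot v\in-\partial\cE(v)$, so $\tfrac12\tfrac{\dd}{\dd t}\vn{u-w}^2=-\ip{\xi_u-\xi_w}{u-w}\leq-\kappa\vn{u-w}^2$ by $\kappa$-monotonicity, and Grönwall's inequality forces $u\equiv w$; alternatively, the contraction estimate of Lemma \ref{lem:EVI_properties} item \ref{item: EVI contraction} yields uniqueness directly.
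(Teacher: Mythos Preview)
Your sketch is a correct outline of the classical argument, but note that the paper does not prove this theorem at all: it is stated with attribution to Brezis--Pazy and cited as Theorem 3.1 of \cite{AmGi13}, and is used as a black box in Section \ref{subsection:Hilbert_spaces}. So there is no ``paper's own proof'' to compare against.

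That said, what you wrote is essentially the content of the cited reference. The reduction from $\kappa$-convexity of $\cE$ to convexity of $\psi=\cE-\tfrac{\kappa}{2}\vn{\cdot}^2$ via the parallelogram law is exactly how one reduces to the classical convex case; the resolvent $J_\tau$ is well-defined for $1+\kappa\tau>0$ because $\mathrm{Id}+\tau\partial\cE=(1+\kappa\tau)\bigl(\mathrm{Id}+\tfrac{\tau}{1+\kappa\tau}\partial\psi\bigr)$ with $\partial\psi$ maximal monotone; and the derivation of \eqref{item:ass_EVI} from the subgradient inequality for $\kappa$-convex functionals is the standard one-line computation. Your remark that the Crandall--Liggett convergence and regularising effect are the substantial analytic input and would be quoted rather than reproved is appropriate: that is precisely why the paper cites the result. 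For uniqueness, your second option (invoking Lemma \ref{lem:EVI_properties}\ref{item: EVI contraction}) is the cleanest, since it uses only \eqref{item:ass_EVI} and not the differential inclusion form of the flow.
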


\subsubsection{The gradient flow constructed from a maximally dissipative operator}

As the main example representing a large class of flows, we consider
\begin{equation} \label{eqn:grad_flow_Hilbert}
    \dot{\rho} = \frac{1}{2}\Delta \rho - \kappa \rho
\end{equation}
on $L^2(\bR^d)$ which formally corresponds to the gradient flow of 
\begin{equation} \label{eqn:Dirichlet_energy_AllenCahn}
    \cE(\rho) = \frac{1}{2}\int |\nabla \rho(x)|^2 + \kappa |\rho(x)|^2 \dd x = - \frac{1}{2} \ip{\Delta \rho - \kappa \rho}{\rho}.
\end{equation}
We see that $\cE$ decomposes as a Dirichlet energy which is lower semi-continuous and convex, combined with $\kappa/2$ times the norm-squared. This implies $\cE$ is $\kappa$-convex and that the gradient flow satisfying \eqref{item:ass_EVI} represented by \eqref{eqn:grad_flow_Hilbert} exists by Theorem \ref{theorem:Brezis-Pazy}.

\smallskip

The use of the Laplacian or the specific form of the Hilbert space in this argument is not essential. The example thus generalizes immediately to the context where we consider a general Hilbert space $\cH$ and replace $\Delta$ in \eqref{eqn:grad_flow_Hilbert} by a maximally dissipative \textit{linear} self-adjoint operator $C$.

\smallskip

We introduce some definitions to take care of general maximally dissipative operators and establish their connection $0$-convex energy functionals.

\begin{definition}
We say that an operator $C \subseteq E \times E$ is dissipative, if for all $(\rho_1,\xi_1),(\rho_2,\xi_2) \in C$ we have
\begin{equation*}
    \ip{\xi_1 - \xi_2}{\rho_1-\rho_2} \leq 0.
\end{equation*}
If $C$ is a single-valued operator, dissipativity is equivalent to
\begin{equation*}
    \ip{C \rho_1 - C \rho_2}{\rho_1 - \rho_2} \leq 0
\end{equation*}
for all $\rho_1,\rho_2 \in \cD(C)$.

\smallskip

We say that an operator $C$ is maximally dissipative if any dissipative extension $B$ of the operator $C$ equals $C$.
\end{definition}

In the context of a maximally dissipative linear and self-adjoint operator, which include all self-adjoint generators of linear strongly continuous semigroups, we thus identify the flow of this semigroup as the gradient flow for the Dirichlet energy constructed from $C$.

\begin{proposition} \label{proposition:Dirichlet_energy_properties}
    Let $(C,\cD(C))$ be a at most single-valued linear self-adjoint and maximally dissipative operator on $\cH$ and let $\kappa \in \bR$.
    
    Let $\cE$ be the lower semi-continuous regularization of the functional
    \begin{equation*}
        \cE_0(\rho) := \begin{cases}
        -\frac{1}{2}\ip{C\rho}{\rho} + \frac{\kappa}{2} \vn{\rho}^2 & \text{if } \rho \in \cD(C), \\
        \infty & \text{otherwise}
        \end{cases}
    \end{equation*}
    Then the conclusion of Theorem \ref{theorem:comparison_principle_tildeoperators} hold for the Hilbert space $\cH$ and energy functional $\cE$.
\end{proposition}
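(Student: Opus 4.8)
The plan is to verify the three structural hypotheses --- Assumptions \ref{assumption:distance_and_energy}, \ref{assumption:gradientflow} and \ref{assumption:regularized_geodesics} --- for $(E,d)=(\cH,\vn{\cdot})$ and the energy $\cE$, after which Theorem \ref{theorem:comparison_principle_tildeoperators} applies verbatim. For Assumption \ref{assumption:distance_and_energy} I would note that $\cH$ is a geodesic space (the line segments $t\mapsto(1-t)\rho+t\pi$ are geodesics) and that $\cE$ is lower semi-continuous by construction; properness follows since $\cD(C)\subseteq\cD(\cE)$ and $\cD(C)$ is dense (a self-adjoint operator is densely defined). Moreover, since $C$ is linear and dissipative, $C0=0$ and $\ip{C\rho}{\rho}\le 0$ on $\cD(C)$, so $\cE_0\ge\tfrac{\kappa}{2}\vn{\cdot}^2$ everywhere; being continuous, this lower bound is inherited by the lower semi-continuous regularization $\cE$, so $\cE>-\infty$ and $\cE$ indeed maps into $(-\infty,+\infty]$.

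For Assumption \ref{assumption:gradientflow} I would use that $-C$ is a non-negative self-adjoint operator, whence $\rho\mapsto-\tfrac12\ip{C\rho}{\rho}$ with domain $\cD(C)$ is a closable non-negative quadratic form and its lower semi-continuous regularization is the closed form $\tfrac12\vn{(-C)^{1/2}\cdot}^2$ with form domain $\cD((-C)^{1/2})$; adding the continuous term $\tfrac{\kappa}{2}\vn{\cdot}^2$ shows that $\cE$ is $\kappa$-convex and lower semi-continuous, so Theorem \ref{theorem:Brezis-Pazy} provides a (unique) \ref{item:ass_EVI} gradient flow of $\cE$, defined on $\overline{\cD(\cE)}=\cH$ because $\cD(\cE)\supseteq\cD(C)$ is dense. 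For Assumption \ref{assumption:regularized_geodesics} I would invoke Proposition \ref{proposition:energy_C1}. By the correspondence between closed forms and self-adjoint operators, the subdifferential of the $\kappa$-convex functional $\cE$ is nonempty precisely on $\cD(C)$, where it is the singleton $\{-C\rho+\kappa\rho\}$, so $\cD(|\partial\cE|)=\cD(C)$ and $|\partial\cE|(\rho)=\vn{C\rho-\kappa\rho}$ there. Then, given $\rho,\pi$ with $I(\rho)+\cE(\pi)<\infty$ (so in particular $\rho\in\cD(C)$) and given $\theta>0$, I would use density of $\cD(C)$ to pick $\pi_\theta\in\cD(C)\setminus\{\rho\}$ with $\vn{\pi_\theta-\pi}\le\theta$ and set $\geo{\rho}{\pi}_\theta(t):=(1-t)\rho+t\pi_\theta$, again a line segment: it is $\kappa$-convex for $\cE$ (item \ref{item:prop:energy_C1_convex}), it satisfies the angle condition since $d(\geo{\rho}{\pi}_\theta(t),\geo{\rho}{\pi}(t))=t\vn{\pi_\theta-\pi}\le t\theta$ (item \ref{item:prop:energy_C1_angle}), and since $\cD(C)$ is a linear subspace it stays in $\cD(C)=\cD(|\partial\cE|)$ with $|\partial\cE|(\geo{\rho}{\pi}_\theta(t))=\vn{(1-t)(C\rho-\kappa\rho)+t(C\pi_\theta-\kappa\pi_\theta)}\to\vn{C\rho-\kappa\rho}=|\partial\cE|(\rho)$ as $t\downarrow 0$ (item \ref{item:prop:energy_C1_key_limit}). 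Proposition \ref{proposition:energy_C1} then delivers Assumption \ref{assumption:regularized_geodesics}.

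The one genuinely non-routine point I expect to be the main obstacle is the functional-analytic identification of the lower semi-continuous regularization $\cE$ with $\tfrac12\vn{(-C)^{1/2}\cdot}^2+\tfrac{\kappa}{2}\vn{\cdot}^2$ (the closed quadratic form of $-C$, shifted), together with the identification of its metric slope, $|\partial\cE|(\rho)=\vn{C\rho-\kappa\rho}$ on $\cD(C)$ --- this relies on $\cD(C)$ being a form core for $-C$ and on the characterization of the slope of a $\kappa$-convex functional as the norm of the minimal subgradient. Once this is settled, $\kappa$-convexity, existence of the \ref{item:ass_EVI} flow through Theorem \ref{theorem:Brezis-Pazy}, and the verification of Assumption \ref{assumption:regularized_geodesics} via Proposition \ref{proposition:energy_C1} are all immediate, and Theorem \ref{theorem:comparison_principle_tildeoperators} yields the asserted comparison principle for $(\cH,\cE)$.
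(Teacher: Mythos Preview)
Your proof is correct and follows the paper's overall strategy: verify Assumptions \ref{assumption:distance_and_energy}, \ref{assumption:gradientflow}, \ref{assumption:regularized_geodesics}, obtain the EVI flow from Theorem \ref{theorem:Brezis-Pazy} via $\kappa$-convexity, and check Assumption \ref{assumption:regularized_geodesics} through Proposition \ref{proposition:energy_C1}. Two details differ. First, to identify $\cE$ and its slope the paper proves the ad hoc Lemma \ref{lemma:Dirichlet_form_properties} by direct computation (stated for $\kappa=0$, the general case being a continuous perturbation), whereas you invoke the closed-form/spectral picture $\cE=\tfrac12\vn{(-C)^{1/2}\cdot}^2+\tfrac{\kappa}{2}\vn{\cdot}^2$; both routes give $\cD(|\partial\cE|)=\cD(C)$ and $|\partial\cE|(\rho)=\vn{C\rho-\kappa\rho}$, and your explicit inclusion of the $\kappa$-shift is in fact tidier than the paper's display. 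Second, for the approximating curve the paper takes the segment from $\rho$ to $S(\hat\theta)\pi$, using the regularizing effect of the EVI semigroup to land in $\cD(C)$, while you simply pick $\pi_\theta\in\cD(C)$ with $\vn{\pi_\theta-\pi}\le\theta$ by density of the domain of a self-adjoint operator. Your choice is more elementary and avoids any appeal to the flow; the paper's choice has the minor advantage of reusing the same regularization device in the subsequent Allen--Cahn example, where an explicit dense subspace of $\cD(|\partial\cE|)$ is less readily available.
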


For the proof, we turn to Theorem \ref{theorem:comparison_principle_tildeoperators} and verify Assumptions \ref{assumption:distance_and_energy}, \ref{assumption:gradientflow} and \ref{assumption:regularized_geodesics}. As the first assumption is immediate in this Hilbertian context, we focus on the other two assumptions. To facilitate the verification, we first study the convexity properties and the Frechét subdifferential of $\cE_0$ and $\cE$ in the case that $\kappa = 0$.

\begin{definition}
let $\phi : E \rightarrow \bR \cup \{\infty\}$ be a functional. The Frechét subdifferential $\partial \phi(x)$ at $x \in E$ is given by
\begin{equation} \label{eqn:subgradient_convex_functional}
    \partial \phi(\rho) := \left\{\xi \in E \, \middle| \, \liminf_{\pi \rightarrow \rho} \frac{\phi(\pi) - \phi(\rho) - \ip{\xi}{\pi - \rho}}{\vn{\pi - \rho}} \geq 0  \right\}.
\end{equation}
If $\phi$ is lower semi-continuous and convex then by Proposition 1.4.4 of \cite{AmGi13} also
\begin{equation} \label{eqn:subgradient_convex_functional1}
    \partial \phi(\rho) = \left\{\xi \in E \, \middle| \, \forall \, \pi \in E \colon \, \phi(\pi) - \phi(\rho) - \ip{\xi}{\pi-\rho} \geq 0  \right\}.
\end{equation}
\end{definition}

Note that the notation $|\partial \phi|(\rho)$ for the local slope of $\phi$ at $\rho$ should not be interpreted as the 'size' of $\partial \phi(\rho)$, although the local slope is related to the size of the smallest element in $\partial \phi(\rho)$. See Proposition 1.4.4 of \cite{AmGi13}.

\begin{lemma} \label{lemma:Dirichlet_form_properties}
    Consider the setting of Proposition \ref{proposition:Dirichlet_energy_properties} with $\kappa = 0$. We then have that
    \begin{enumerate}[(a)]
        \item \label{item:Dirichlet_form_preform} $\cE_0 \geq 0$ and for $\rho,\pi \in \cD(C)$ and $t \in [0,1]$ we have 
        \begin{gather}
            \cE_0(\pi) - \cE_0(\rho) - \ip{-C\rho}{\pi-\rho} = \cE_0(\pi-\rho) \geq 0, \label{eqn:Dirichlet_form_subgradient_pre} \\
            \cE_0(\rho + t(\pi-\rho)) = (1-t) \cE_0(\rho) + t\cE_0(\pi) - t(1-t) \cE_0(\pi-\rho). \label{eqn:Dirichlet_form_convexity_pre}
        \end{gather}
        \item \label{item:Dirichlet_form_convexity} $\cD(C) \subseteq \cD(\cE)$, $0 \leq \cE \leq \cE_0$ and $\cE = \cE_0$ on $\cD(C)$ and $\cE$ is $0$-convex. If $\rho$ is such that $\cE(\rho) < \infty$ then there are $\rho_n \in \cD(C)$ such that 
        \begin{equation}\label{eqn:Dirichlet_form_recovery_sequence}
            \lim_n \cE_0(\rho_n) = \lim_n \cE(\rho_n) = \cE(\rho).
        \end{equation}
        \item \label{item:Dirichlet_form_subgradient} $\cD(\partial \cE) = \cD(C)$ and for all $\rho \in \cD(C)$ we have $\partial\cE(\rho) = \{-C\rho\}$ and $|\partial \cE|(\rho) = \vn{C\rho}$.
    \end{enumerate}
    
\end{lemma}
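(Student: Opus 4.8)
The plan is to exploit the fact that $\rho \mapsto -\tfrac12\ip{C\rho}{\rho}$ is a nonnegative quadratic form when $\kappa=0$ (nonnegativity because $C$ is dissipative and linear, so $\ip{C\rho}{\rho} = \ip{C\rho - C0}{\rho - 0} \le 0$), and that quadratic forms satisfy the exact polarization/parallelogram identities that immediately give both convexity and the subgradient formula. For part \ref{item:Dirichlet_form_preform}, I would simply expand: writing $q(\rho):=\cE_0(\rho) = -\tfrac12\ip{C\rho}{\rho}$, bilinearity and self-adjointness of $C$ give $q(\pi) - q(\rho) - \ip{-C\rho}{\pi-\rho} = -\tfrac12\ip{C\pi}{\pi} + \tfrac12\ip{C\rho}{\rho} + \ip{C\rho}{\pi-\rho} = -\tfrac12\ip{C(\pi-\rho)}{\pi-\rho} = q(\pi-\rho) \ge 0$, which is \eqref{eqn:Dirichlet_form_subgradient_pre}; note one must check $\pi-\rho\in\cD(C)$, which holds since $\cD(C)$ is a linear subspace. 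The identity \eqref{eqn:Dirichlet_form_convexity_pre} is the analogous expansion of $q(\rho+t(\pi-\rho))$ using bilinearity, collecting the $t$, $t^2$ terms; this is a routine computation. In particular $\cE_0(\rho+t(\pi-\rho)) \le (1-t)\cE_0(\rho) + t\cE_0(\pi)$ since $\cE_0(\pi-\rho)\ge 0$, so $\cE_0$ is $0$-convex on its domain.

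For part \ref{item:Dirichlet_form_convexity}, recall $\cE$ is the lsc regularization of $\cE_0$, i.e. $\cE(\rho) = \liminf_{\pi\to\rho}\cE_0(\pi)$ (equivalently the largest lsc minorant). Nonnegativity $\cE\ge 0$ and $\cE\le\cE_0$ are immediate; the inclusion $\cD(C)\subseteq\cD(\cE)$ follows since $\cE\le\cE_0<\infty$ there. That $\cE=\cE_0$ on $\cD(C)$ requires lower semicontinuity of $\cE_0$ restricted to $\cD(C)$ along sequences converging within $\cH$: here I would invoke that $\cE_0$ is itself lsc on $\cD(C)$ because $C$ is a closed operator (maximally dissipative operators are closed) and $\rho\mapsto\ip{C\rho}{\rho}$ is lsc on the graph — alternatively, $\cE_0$ is convex and lsc on $\cH$ after extension, so equals its own regularization there. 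The $0$-convexity of $\cE$ then passes from $\cE_0$ to $\cE$ by a standard regularization argument: given $\rho,\pi\in\cD(\cE)$, approximate by sequences $\rho_n,\pi_n$ with $\cE_0(\rho_n)\to\cE(\rho)$, $\cE_0(\pi_n)\to\cE(\pi)$, apply \eqref{eqn:Dirichlet_form_convexity_pre} along the segment, and take liminf using $\cE\le\cE_0$ and lower semicontinuity of $\cE$ at the interior point. The recovery sequence \eqref{eqn:Dirichlet_form_recovery_sequence} is essentially the definition of the lsc regularization combined with $\cD(C)$ being a form core — if $\cE(\rho)<\infty$ pick $\pi_n\to\rho$ with $\cE_0(\pi_n)\to\cE(\rho)$; since $\cE_0(\pi_n)<\infty$ forces $\pi_n\in\cD(C)$, these already lie in $\cD(C)$, and $\cE(\pi_n)=\cE_0(\pi_n)$ by the previous point, so both limits hold.

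For part \ref{item:Dirichlet_form_subgradient}, one inclusion is easy: for $\rho\in\cD(C)$, \eqref{eqn:Dirichlet_form_subgradient_pre} together with $\cE\le\cE_0$ and $\cE(\rho)=\cE_0(\rho)$ gives, for $\pi\in\cD(C)$, $\cE(\pi)-\cE(\rho)-\ip{-C\rho}{\pi-\rho}\ge 0$; for general $\pi\in\cD(\cE)$ one uses the recovery sequence $\pi_n\in\cD(C)$ from \eqref{eqn:Dirichlet_form_recovery_sequence} and passes to the limit, so $-C\rho\in\partial\cE(\rho)$ via characterization \eqref{eqn:subgradient_convex_functional1} (valid since $\cE$ is lsc and convex). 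For the reverse inclusion $\cD(\partial\cE)\subseteq\cD(C)$ and uniqueness of the subgradient, I would argue: a convex quadratic form has at most one subgradient at each point (if $\xi_1,\xi_2\in\partial\cE(\rho)$ then adding the two subgradient inequalities at $\pi$ and at $2\rho-\pi$ and using the quadratic identity forces $\ip{\xi_1-\xi_2}{\pi-\rho}=0$ for all $\pi$ in a dense set, hence $\xi_1=\xi_2$); and if $\xi\in\partial\cE(\rho)$ then testing \eqref{eqn:subgradient_convex_functional1} against $\pi=\rho\pm t\eta$ for $\eta\in\cD(C)$ and using $\cE=\cE_0$ there shows $\ip{\xi+C\rho}{\eta}$ controls a quadratic, forcing $\xi$ to act like $-C\rho$ on the form domain; maximal dissipativity of $C$ (which makes $-C$ maximal monotone, hence its graph is all of $\partial\cE$ by the correspondence between maximal monotone operators and subgradients of convex lsc functions, cf. \cite{AmGi13}) then yields $\rho\in\cD(C)$ and $\xi=-C\rho$. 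Finally $|\partial\cE|(\rho)=\vn{C\rho}$ follows from Proposition 1.4.4 of \cite{AmGi13}, which identifies the local slope with the norm of the minimal-norm element of $\partial\cE(\rho)$ — here a singleton $\{-C\rho\}$.

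\textbf{Main obstacle.} The delicate point is the reverse inclusion in \ref{item:Dirichlet_form_subgradient}, namely showing that membership in $\cD(\partial\cE)$ forces membership in $\cD(C)$; this is where maximal dissipativity of $C$ is essential and where one must be careful that $\cE$ is the \emph{regularized} functional rather than $\cE_0$ itself. The clean way is to invoke the general theory (Brezis): $-C$ maximal monotone $\Leftrightarrow$ $-C = \partial\Phi$ for the convex lsc $\Phi$ which must then coincide with $\cE$ (up to an additive constant, fixed here since both vanish appropriately), so their subdifferentials agree graph-wise. Verifying that this $\Phi$ is indeed our $\cE$ (and not merely equal to $\cE_0$ on $\cD(C)$) uses the density/recovery argument from part \ref{item:Dirichlet_form_convexity}. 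All other steps are routine bilinear-form manipulations.
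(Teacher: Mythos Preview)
Your approach to parts \ref{item:Dirichlet_form_preform} and \ref{item:Dirichlet_form_convexity} matches the paper's, though for $\cE = \cE_0$ on $\cD(C)$ the paper proceeds more directly: for $\rho \in \cD(C)$ and any $\rho_n \to \rho$, \eqref{eqn:Dirichlet_form_subgradient_pre} gives $\cE_0(\rho_n) \ge \cE_0(\rho) + \ip{-C\rho}{\rho_n-\rho} \to \cE_0(\rho)$, hence $\liminf_n \cE_0(\rho_n) \ge \cE_0(\rho)$. Your alternative ``$\cE_0$ is already lsc, so equals its own regularization'' is circular, and the closedness-of-$C$ route is vague as stated.

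The genuine gap is in part \ref{item:Dirichlet_form_subgradient}, namely the reverse inclusion $\cD(\partial\cE) \subseteq \cD(C)$. Your hands-on test writes ``$\ip{\xi + C\rho}{\eta}$'', but $C\rho$ is undefined until you know $\rho \in \cD(C)$, and $\rho \pm t\eta$ need not lie in $\cD(C)$ when $\rho$ does not. The fallback you call the clean way asserts ``$-C$ maximal monotone $\Leftrightarrow$ $-C = \partial\Phi$ for some convex lsc $\Phi$'', which is false in general: maximal monotone operators on a Hilbert space are subdifferentials only when maximally \emph{cyclically} monotone (Rockafellar), and even granting such a $\Phi$, identifying it with $\cE$ would still require a non-circular argument. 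The paper's route is one line and avoids all of this: having shown $-C\rho \in \partial\cE(\rho)$ for every $\rho \in \cD(C)$, one has the graph inclusion $C \subseteq -\partial\cE$; since $\cE$ is convex and lsc, $\partial\cE$ is monotone, so $-\partial\cE$ is a dissipative extension of $C$, and maximal dissipativity of $C$ forces $C = -\partial\cE$. This gives $\cD(\partial\cE) = \cD(C)$ and $\partial\cE(\rho) = \{-C\rho\}$ simultaneously, with no separate uniqueness step needed.
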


\begin{proof}
For the proof of \ref{item:Dirichlet_form_preform}, note that due to dissipativity $\cE_0 \geq 0$. Next, consider $x,y \in \cD(C)$, then using the linearity of $C$ we obtain
\begin{equation*}
    \cE_0(\pi) - \cE_0(\rho) - \ip{-C \rho}{\pi-\rho} = \cE_0(\pi-\rho) + \frac{1}{2}\left(\ip{C\rho}{\pi} - \ip{C\pi}{\rho}\right).
\end{equation*}
As $C$ is self-adjoint, we have
\begin{equation*}
    \cE_0(\pi) - \cE_0(\rho) - \ip{-C\pi}{\pi-\rho} = \cE_0(\pi-\rho) \geq 0
\end{equation*}
establishing \eqref{eqn:Dirichlet_form_subgradient_pre}. The parallelogram rule in \eqref{eqn:Dirichlet_form_convexity_pre} follows by a direct computation.
We proceed to the second item. As $\cE$ is the lower semi continuous regularization of $\cE_0 \geq 0$, we find $0 \leq \cE \leq \cE_0$. Thus, let $\rho \in \cD(C)$ and consider $\rho_n \in \cD(C)$ such that $\rho_n \rightarrow \rho$. Then by \ref{item:Dirichlet_form_preform}, we have
\begin{equation*}
    \liminf_{n \rightarrow \infty} \cE_0(\rho_n) \geq \liminf_{n \rightarrow \infty} \cE_0(\rho) + \ip{-C\rho}{\rho_n-\rho} = \cE_0(\rho)
\end{equation*}
establishing that $\cE(\rho) = \cE_0(\rho)$. As a consequence, the $0$-convexity of $\cE$ follows from \eqref{eqn:Dirichlet_form_convexity_pre}. \eqref{eqn:Dirichlet_form_recovery_sequence} follows by construction.

\smallskip

To establish \ref{item:Dirichlet_form_subgradient}, first consider $\rho \in \cD(C)$. We verify that $-C\rho \in \partial \cE(\rho)$ by using \eqref{eqn:subgradient_convex_functional1}, in other words, we establish
\begin{equation*}
    \cE(\pi) - \cE(\pi) - \ip{-C\rho}{\pi-\rho} \geq 0
\end{equation*}
for any $\pi$. First note that if $\cE(\pi) = \infty$ there is nothing to prove. So consider $\pi$ such that $\cE(\pi) < \infty$. By \eqref{eqn:Dirichlet_form_recovery_sequence} there are $\pi_n \in \cD(C)$ converging to $\pi$ satisfying $\lim_n \cE(\pi_n) = \cE(\pi)$. Then by \eqref{eqn:Dirichlet_form_subgradient_pre} we have
\begin{multline*}
    \cE(\pi) - \cE(\rho) - \ip{C\rho}{\pi-\rho} \\
    \geq \lim_n \cE(\pi_n) - \cE(\rho) - \ip{-C\rho}{\pi_n-\rho} \geq \lim_n \cE(\pi_n - \rho) \geq 0
\end{multline*}
so that $\rho \in \cD(\partial \cE)$ and $-C\rho \in \partial \cE(\rho)$. It follows that the graph of $C$ is contained in the dissipative operator $- \partial \cE$ and as $C$ is maximally dissipative $C = - \partial \cE$. We thus find that $\partial \cE(\rho) = \{-C\rho\}$ which implies by Proposition 1.4.4 of \cite{AmGiSa08} that $|\partial \cE|(\rho) = \vn{C \rho}$.
\end{proof}

\begin{proof}[Proof of Proposition \ref{proposition:Dirichlet_energy_properties}] 
It suffices to verify Assumptions \ref{assumption:distance_and_energy}, \ref{assumption:gradientflow} and \ref{assumption:regularized_geodesics}. First note that Assumption \ref{assumption:distance_and_energy} is immediate. We next turn to Assumption \ref{assumption:gradientflow} and establish the existence of the gradient flow satisfying \eqref{item:ass_EVI}. 

As the map $\rho \mapsto \frac{\kappa}{2} \vn{\rho}^2$ is $\kappa$-convex, it follows by Lemma \ref{lemma:Dirichlet_form_properties} that $\cE$ is $\kappa$-convex. Thus, Theorem \ref{theorem:Brezis-Pazy} implies the existence of a solution to \eqref{item:ass_EVI} establishing Assumption \ref{assumption:gradientflow}.

 We will verify Assumption \ref{assumption:regularized_geodesics} by means of Proposition \ref{proposition:energy_C1}. Consider $\rho,\pi$ such that $I(\rho) + \cE(\pi) < \infty$. We approximate the geodesic $\geo{\rho}{\pi}(t) = (1-t)\rho + t \pi$ between $\rho$ and $\pi$ by the geodesic $\geo{\rho}{\pi}_\theta(t) := (1-t)\rho + t S(\theta) \pi$ between $\rho$ and $S(\hat{\theta}) \pi$, where $t \mapsto S(t)\pi$ is used to denote the gradient flow started from $\pi$ and where $\hat{\theta}$ is chosen such that $\vn{S(\hat{\theta}) \pi - \pi} \leq \theta$.

\smallskip

To verify the angle condition \ref{item:prop:energy_C1_angle} of Proposition \ref{proposition:energy_C1}, note that
\begin{align*}
    \frac{\vn{\geo{\rho}{\pi}_\theta(t) -\geo{\rho}{\pi}(t)}}{t} & = \frac{\vn{(1-t)\rho + t S(\hat{\theta}) \pi - \left((1-t)\rho + t \pi \right)}}{t} \\
    & = \vn{S(\hat{\theta})\pi - \pi}
\end{align*}
which by choice of $\hat{\theta}$ is smaller than $\theta$.

For the second property, note that by Lemma \ref{lemma:Dirichlet_form_properties} we have
\begin{equation*}
\lim_{t \downarrow 0} |\partial \cE| (\geo{\rho}{\pi}_\theta(t)) = \lim_{t \downarrow 0} \vn{(1-t) C \rho + t C S(\hat{\theta}) \pi}  = \vn{C\rho} = |\partial \cE|(\rho)
\end{equation*}
so that \eqref{eq: energy directional derivative} follows by Proposition \ref{proposition:energy_C1}.

\end{proof}

\subsubsection{The Allen-Cahn equation}

In the context of more concrete Hilbert spaces, we can introduce more general energy functionals. We will not aim for an exhaustive list, but rather consider a single example of interest: the energy functional associated to the Allen-Cahn equation on $\cH = L^2(\bR^d)$:
\begin{equation} \label{eqn:Allen_Cahn}
    \dot{\rho} = \frac{1}{2} \Delta \rho - F'(\rho) - \kappa \rho.
\end{equation}
Here $\kappa \in \bR$ and $F : \bR^d \rightarrow [0,\infty)$ is a non-negative convex $C^1$ function such that $F(0) = 0$. By Remark 2.3.9 and Corollary 1.4.5 in \cite{AmGiSa08}, we can represent this equation as the gradient flow of the energy
\begin{equation} \label{eqn:Allen_Cahn_energy}
    \cE(\rho) = \frac{1}{2} \int |\nabla \rho(x)|^2 + \kappa |\rho(x)|^2 \dd x + \int F(\rho(x)) \dd x.
\end{equation}

\begin{proposition}
    Consider the Hilbert space $\cH = L^2(\bR^d)$ and energy functional $\cE$ of \eqref{eqn:Allen_Cahn_energy}, where $\kappa \in \bR$ and  where $F : \bR^d \rightarrow [0,\infty)$ is a non-negative convex $C^1$ function such that $F(0) = 0$.
    
    Then the conclusion of Theorem \ref{theorem:comparison_principle_tildeoperators} hold.  
\end{proposition}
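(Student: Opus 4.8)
The plan is to verify Assumptions \ref{assumption:distance_and_energy}, \ref{assumption:gradientflow} and \ref{assumption:regularized_geodesics} and then invoke Theorem \ref{theorem:comparison_principle_tildeoperators}, following the scheme of the proof of Proposition \ref{proposition:Dirichlet_energy_properties}; the only genuinely new feature is the nonlinear term $\int F(\rho)\,\dd x$. Assumption \ref{assumption:distance_and_energy} is immediate: $(\cH,\vn{\cdot})$ is geodesic, $0\in\cD(\cE)$ because $F(0)=0$, and $\cE$ is lower semi-continuous since the Dirichlet part is convex and lower semi-continuous, $\rho\mapsto\tfrac{\kappa}{2}\vn{\rho}^2$ is continuous, and $\rho\mapsto\int F(\rho)\,\dd x$ is lower semi-continuous by Fatou's lemma. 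For Assumption \ref{assumption:gradientflow} I would observe that $\cE$ is $\kappa$-convex, being the sum of the $0$-convex Dirichlet energy, the $\kappa$-convex functional $\rho\mapsto\tfrac{\kappa}{2}\vn{\rho}^2$, and the $0$-convex functional $\rho\mapsto\int F(\rho)\,\dd x$ (convexity of the last one is pointwise convexity of $F$); Theorem \ref{theorem:Brezis-Pazy} then yields the \ref{item:ass_EVI} gradient flow, and $\cE$ is strongly $\kappa$-convex by Theorem \ref{theorem:EVI_implies_convex} (or simply because geodesics in a Hilbert space are unique straight segments).

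For Assumption \ref{assumption:regularized_geodesics} I would use Proposition \ref{proposition:energy_C1}. From the standard theory of the energy \eqref{eqn:Allen_Cahn_energy} (see \cite[Rmk.~2.3.9, Cor.~1.4.5]{AmGiSa08} and \cite[Prop.~1.4.4]{AmGi13}) one has $\cD(\partial\cE)=\{\rho\in H^2(\bR^d):F'(\rho)\in L^2(\bR^d)\}$, $\partial\cE(\rho)=\{-\tfrac12\Delta\rho+\kappa\rho+F'(\rho)\}$ on this domain, and $|\partial\cE|(\rho)=\vn{-\tfrac12\Delta\rho+\kappa\rho+F'(\rho)}$. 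Fix $\rho,\pi$ with $I(\rho)+\cE(\pi)<\infty$, so that $\rho\in\cD(\partial\cE)$, and write $t\mapsto S(t)\pi$ for the gradient flow started at $\pi$. By Lemma \ref{lem:EVI_properties} \ref{item:Irightcontinuous}, $S(\hat\theta)\pi\in\cD(\partial\cE)$ for every $\hat\theta>0$, and by continuity of the flow I may choose $\hat\theta>0$ with $\vn{S(\hat\theta)\pi-\pi}\le\theta$ and $S(\hat\theta)\pi\ne\rho$ (if $S(\hat\theta)\pi=\rho$ for all small $\hat\theta$, then $\rho$ is stationary, $I(\rho)=0$, and \eqref{eq: energy directional derivative} is trivial). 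Set $\geo{\rho}{\pi}_\theta(t):=(1-t)\rho+tS(\hat\theta)\pi$, which approximates the geodesic $\geo{\rho}{\pi}(t)=(1-t)\rho+t\pi$. Condition \ref{item:prop:energy_C1_convex} of Proposition \ref{proposition:energy_C1} holds by strong $\kappa$-convexity of $\cE$, and condition \ref{item:prop:energy_C1_angle} follows from $\vn{\geo{\rho}{\pi}_\theta(t)-\geo{\rho}{\pi}(t)}/t=\vn{S(\hat\theta)\pi-\pi}\le\theta$.

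It remains to check condition \ref{item:prop:energy_C1_key_limit}. Since $F\ge0=F(0)$, the origin is a global minimum of $F$, so $F'(0)=0$ and $F'$ is nondecreasing; hence, pointwise in $x$, the value $\geo{\rho}{\pi}_\theta(t)(x)=(1-t)\rho(x)+tS(\hat\theta)\pi(x)$ lies between $\rho(x)$ and $S(\hat\theta)\pi(x)$, and therefore $|F'(\geo{\rho}{\pi}_\theta(t)(x))|\le|F'(\rho(x))|+|F'(S(\hat\theta)\pi(x))|$, the right-hand side being in $L^2(\bR^d)$ because $\rho,S(\hat\theta)\pi\in\cD(\partial\cE)$. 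As $\geo{\rho}{\pi}_\theta(t)\in H^2(\bR^d)$ too (an affine combination of $H^2$ functions), we get $\geo{\rho}{\pi}_\theta(t)\in\cD(\partial\cE)=\cD(|\partial\cE|)$ for all $t\in[0,1]$. Letting $t\downarrow0$ we have $\geo{\rho}{\pi}_\theta(t)\to\rho$ in $H^2(\bR^d)$, so $-\tfrac12\Delta\geo{\rho}{\pi}_\theta(t)+\kappa\geo{\rho}{\pi}_\theta(t)\to-\tfrac12\Delta\rho+\kappa\rho$ in $L^2(\bR^d)$, while $F'(\geo{\rho}{\pi}_\theta(t))\to F'(\rho)$ in $L^2(\bR^d)$ by dominated convergence (pointwise convergence from continuity of $F'$, $L^2$-domination from the bound above). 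Consequently $|\partial\cE|(\geo{\rho}{\pi}_\theta(t))=\vn{-\tfrac12\Delta\geo{\rho}{\pi}_\theta(t)+\kappa\geo{\rho}{\pi}_\theta(t)+F'(\geo{\rho}{\pi}_\theta(t))}\to|\partial\cE|(\rho)$, which is \ref{item:prop:energy_C1_key_limit}. Proposition \ref{proposition:energy_C1} then delivers Assumption \ref{assumption:regularized_geodesics}, and Theorem \ref{theorem:comparison_principle_tildeoperators} applies, concluding the proof.

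The main obstacle is exactly the verification of condition \ref{item:prop:energy_C1_key_limit}: keeping the whole segment $(1-t)\rho+tS(\hat\theta)\pi$ inside $\cD(|\partial\cE|)$ and controlling its slope as $t\downarrow0$. Both points rest on identifying $\partial\cE$ with the Allen--Cahn operator, which I import from the standard theory of \cite{AmGiSa08,AmGi13} (a self-contained identification $\cD(\partial\cE)=\{\rho\in H^2:F'(\rho)\in L^2\}$ would require elliptic regularity), and on the monotonicity of $F'$, which is what makes the pointwise $L^2$-domination of $F'$ along convex combinations work and hence licenses the dominated convergence step. The only further nuisance is the degenerate case in which the flow from $\pi$ instantly reaches $\rho$, dispatched as indicated above.
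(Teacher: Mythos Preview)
Your proof is correct and follows essentially the same route as the paper: verify the three assumptions, obtain the \ref{item:ass_EVI} flow from $\kappa$-convexity via Theorem \ref{theorem:Brezis-Pazy}, identify $\partial\cE$ with the Allen--Cahn operator through \cite{AmGiSa08}, build the approximating curves $\geo{\rho}{\pi}_\theta(t)=(1-t)\rho+tS(\hat\theta)\pi$, and check condition \ref{item:prop:energy_C1_key_limit} by a dominated-convergence argument resting on the monotonicity of $F'$. Your treatment is in fact slightly more careful than the paper's in two places: you explicitly dispose of the degenerate case $S(\hat\theta)\pi=\rho$ required by the hypothesis $\geo{\rho}{\pi}_\theta(t)\neq\rho$ in Proposition \ref{proposition:energy_C1}, and you justify the pointwise $L^2$-domination $|F'((1-t)\rho+tS(\hat\theta)\pi)|\le|F'(\rho)|+|F'(S(\hat\theta)\pi)|$ from $F'(0)=0$ and monotonicity, whereas the paper records the analogous bound more tersely.
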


\begin{proof}
It suffices to verify Assumptions \ref{assumption:distance_and_energy}, \ref{assumption:gradientflow} and \ref{assumption:regularized_geodesics}.

By construction, $\cE$ is $\kappa$-convex. By Theorem \ref{theorem:Brezis-Pazy} the gradient flow for $\cE$ exists and satisfies \eqref{item:ass_EVI}. As in the proof of Proposition \ref{proposition:Dirichlet_energy_properties}, it thus suffices to establish Assumption \ref{assumption:regularized_geodesics}. We do so as above. First note that by (3.4.14) of Remark 2.3.9 and Corollary 1.4.5 in \cite{AmGiSa08} we have
\begin{equation*}
    \partial \cE(\rho) = \begin{cases}
    \{\Delta \rho - F'(\rho) - \kappa \rho\} & \text{if } \Delta \rho, F'(\rho) \in L^2(\bR^d), \\
    \emptyset & \text{otherwise}.
    \end{cases}
\end{equation*}
We thus obtain that
\begin{equation*}
    |\partial \cE| (\rho) = \vn{\Delta \rho - F'(\rho) - \kappa \rho}.
\end{equation*}
We next establish the conditions for Proposition \ref{proposition:energy_C1}, and we do so on the basis of the same curves $\geo{\rho}{\pi}_\theta(t) = (1-t) \rho + t S(\hat{\theta})\pi$ as in the proof of Proposition \ref{proposition:Dirichlet_energy_properties}. $\geo{\rho}{\pi}_\theta$ is therefore the linear interpolation between two elements in $\cD(|\partial \cE)|$. As $F'$ is increasing and $\Delta$ is linear, it follows that $\geo{\rho}{\pi}_\theta(t) \in \cD(|\partial \cE|)$ for all $t \in [0,1]$. We next establish that
\begin{equation} \label{eqn:Allen_Cahn_convergence_information}
    \lim_{t \downarrow 0} |\partial \cE|^2(\geo{\rho}{\pi}_\theta(t)) = |\partial\cE|^2(\rho).
\end{equation}
We will establish this result by the use of the dominated convergence theorem. First of all
\begin{equation*}
    |\partial \cE|^2(\geo{\rho}{\pi}_\theta(t)) = \int |\Delta \geo{\rho}{\pi}_\theta(t)(x) - F'(\geo{\rho}{\pi}_\theta(t)(x)) - \kappa \geo{\rho}{\pi}_\theta(t)(x)|^2 \dd x
\end{equation*}
and as $\geo{\rho}{\pi}_\theta(t) \rightarrow \rho$ point-wise as $t \downarrow 0$, it suffices to find a integrable dominating function. Elementary point-wise estimates yield
\begin{align*}
    & |\Delta \geo{\rho}{\pi}_\theta(t)(x) - F'(\geo{\rho}{\pi}_\theta(t)(x)) - \kappa \geo{\rho}{\pi}_\theta(t)(x)|^2 \\
    & \qquad \leq 3|\Delta \geo{\rho}{\pi}_\theta(t)|^2 +3 |F'(\geo{\rho}{\pi}_\theta(t)(x)|^2 + 3\kappa^2 |\geo{\rho}{\pi}_\theta(t)(x)|^2 \\
    & \qquad \leq 3|\Delta \rho(x)|^2 + 3 |\Delta S(\hat{\theta}) \pi(x)|^2 +3 |F'(\rho(x))|^2 \\
    & \hspace{4cm} + 3 |F'(S(\hat{\theta})\pi(x)) |+ 3\kappa^2 |\rho(x)|^2 + 3 \kappa^2 |S(\hat{\theta})\pi(x)|^2
\end{align*}
as $F'$ is increasing, and all six terms are integrable by assumption. Thus \eqref{eqn:Allen_Cahn_convergence_information} follows by dominated convergence. Thus Assumption \ref{assumption:regularized_geodesics} follows by an application of Proposition \ref{proposition:energy_C1}.
\end{proof}

\subsection{Almost Riemannian manifolds}

In our second set of examples, we consider spaces that are essentially Riemannian manifolds. To illustrate what we are aiming for, consider the Hamiltonian 
\begin{equation} \label{eqn:CIR_Hamiltonian}
    Hf(x) = (\mu-x) f'(x) + \frac{1}{2}x (f'(x))^2, \qquad x \geq 0
\end{equation}
for some constant $\mu >0$. This Hamiltonian arises in the study of Freidlin-Wentzell type large deviation analysis of the Cox-Ingersoll-Ross model in finance \cite{CoInRo85,DFL11}. Following \cite{DFL11}, we study the Hamilton--Jacobi equation using a Riemannian point of view, where the Riemannian metric is generated by the quadratic part of the Hamiltonian. Arguing that the Hamiltonian is a map on the co-tangent bundle, we obtain a metric on the tangent bundle that satisfies $\ip{v}{w}_{g(x)} = x^{-1}vw$ with the metric $g(x) = x^{-1}$ being singular in $0$. 

We will show, however, that by interpreting the drift in \eqref{eqn:CIR_Hamiltonian} as the gradient flow of a functional $\cE$ that satisfies $\cE(0) = \infty$, we can work around the singularity of the metric at the boundary.

\smallskip

The framework that we will be working in is the following.

\begin{assumption} \label{assumption:Riemannian_manifold}
Let $(E,d,\cE)$ be a triple of a complete space $(E,d)$ together with an energy $\cE : E \rightarrow (-\infty,\infty]$. Assume that the following are satisfied.
\begin{enumerate}[(a)]
    \item $E_0 := \cD(\cE)$ is dense in $E$ and the restriction of $d$ to $E_0$ is such that $(E_0,d)$ is a smooth Riemannian manifold. 
    \item $\cE$ is continuously differentiable on $E_0$.
    \item  $\cE$ is $\kappa$-convex along geodesics in $E_0$. 
\end{enumerate}
\end{assumption}

\begin{proposition} \label{proposition_assumption_Riemannian_manifold}
    Suppose that Assumption \ref{assumption:Riemannian_manifold} is satisfied, then the conclusion of Theorem \ref{theorem:comparison_principle_tildeoperators} hold.
\end{proposition}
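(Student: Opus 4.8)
The plan is to verify that the triple $(E,d,\cE)$ of Assumption \ref{assumption:Riemannian_manifold} satisfies the three standing hypotheses of Theorem \ref{theorem:comparison_principle_tildeoperators}, namely Assumptions \ref{assumption:distance_and_energy}, \ref{assumption:gradientflow} and \ref{assumption:regularized_geodesics}, and then invoke that theorem. Assumption \ref{assumption:distance_and_energy} is essentially immediate: $(E,d)$ is complete by hypothesis, it is geodesic because $(E_0,d)$ is a Riemannian manifold whose geodesics realize the distance and $E_0$ is dense, and $\cE$ is proper ($\cD(\cE)=E_0\neq\emptyset$) and lower semicontinuous (lower semicontinuity needs to be read off from the $\kappa$-convexity plus continuity on $E_0$ together with density; one should state it as part of Assumption \ref{assumption:Riemannian_manifold} or derive it). For Assumption \ref{assumption:gradientflow}, since $\cE$ is $\kappa$-convex along geodesics, I would appeal to Theorem \ref{theorem:EVI_implies_convex}'s converse direction --- more precisely, to the existence results for EVI gradient flows under $\kappa$-convexity discussed in Section 3.4 of \cite{MuSa20} and already used in the other examples --- to produce an \ref{item:ass_EVI} gradient flow of $\cE$ defined on all of $E$ (the flow instantly regularizes into $E_0=\cD(\cE)$, which is exactly what the EVI formulation demands, and $\overline{\cD(\cE)}=E$ holds by density).

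The substantive step is Assumption \ref{assumption:regularized_geodesics}, and here I would use Proposition \ref{proposition:energy_C1}. Fix $\rho,\pi$ with $I(\rho)+\cE(\pi)<\infty$; then $\rho,\pi\in E_0$ and, since $\cE$ is $C^1$ on $E_0$, $I(\rho)=|\partial\cE|^2(\rho)=\vn{\grad\cE(\rho)}_{g(\rho)}^2<\infty$ automatically. Let $\geo{\rho}{\pi}$ be the constant-speed minimizing geodesic between $\rho$ and $\pi$; a priori this geodesic may leave $E_0$, which is the reason one cannot take the approximating curves to be $\geo{\rho}{\pi}$ itself. Instead I would construct $\geo{\rho}{\pi}_\theta$ by flowing the endpoint slightly along the gradient flow: set $\geo{\rho}{\pi}_\theta(t)$ to be the geodesic from $\rho$ to $S(\hat\theta)\pi$ for $\hat\theta=\hat\theta(\theta)$ small enough that $d(S(\hat\theta)\pi,\pi)\leq\theta$ and --- crucially --- small enough that this whole geodesic stays in $E_0$. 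The point is that, for $\hat\theta>0$, the gradient flow has pushed $\pi$ into the region where $\cE$ is finite with controlled slope, and by continuity of the geodesic flow and openness of $E_0$ in the manifold topology one can keep the segment $[\rho,S(\hat\theta)\pi]$ inside $E_0$; $\kappa$-convexity of $\cE$ along this segment is then given by Assumption \ref{assumption:Riemannian_manifold}(c). The angle condition \ref{item:prop:energy_C1_angle} follows exactly as in the Hilbert-space proofs: the two geodesics share the initial point $\rho$, and $d(\geo{\rho}{\pi}_\theta(t),\geo{\rho}{\pi}(t))/t\to \text{(something}\leq d(S(\hat\theta)\pi,\pi))\leq\theta$ by comparison of geodesics with common origin (e.g.\ a Rauch/convexity-of-distance estimate on the manifold, or simply the $1$-Lipschitz dependence of constant-speed geodesics on their endpoints in a space of bounded curvature). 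Condition \ref{item:prop:energy_C1_key_limit} holds because $t\mapsto\geo{\rho}{\pi}_\theta(t)$ is a smooth curve in $E_0$ with $\geo{\rho}{\pi}_\theta(t)\to\rho$, and $\cE\in C^1(E_0)$ forces $|\partial\cE|(\geo{\rho}{\pi}_\theta(t))=\vn{\grad\cE(\geo{\rho}{\pi}_\theta(t))}\to\vn{\grad\cE(\rho)}=|\partial\cE|(\rho)$ by continuity of $\grad\cE$ and of the metric. Proposition \ref{proposition:energy_C1} then yields Assumption \ref{assumption:regularized_geodesics}.

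With all three assumptions verified, Theorem \ref{theorem:comparison_principle_tildeoperators} applies verbatim and gives the comparison principle, which is the asserted conclusion. I expect the main obstacle to be the bookkeeping in the construction of $\geo{\rho}{\pi}_\theta$: one must simultaneously (i) keep the approximating geodesic inside the open manifold $E_0$ so that $C^1$-regularity and $\kappa$-convexity of $\cE$ along it are available, and (ii) control its distance from the true geodesic $\geo{\rho}{\pi}$ by $\theta$. Both are handled by taking $\hat\theta$ small and using continuity of the geodesic flow on the manifold, but the argument relies on $E_0$ being open in $(E,d)$ and on $\pi$ not sitting at the ``bad boundary'' — which is guaranteed precisely because $\cE(\pi)<\infty$. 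In the CIR example this is the statement that any point with finite energy is an interior point $x>0$, where the metric $g(x)=x^{-1}$ is smooth, and the singular boundary $\{0\}$ is invisible because $\cE(0)=\infty$; flowing $\pi$ forward only moves it further into the interior.
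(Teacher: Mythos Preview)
Your overall plan matches the paper's: verify the three assumptions and invoke Theorem \ref{theorem:comparison_principle_tildeoperators}. The treatment of Assumptions \ref{assumption:distance_and_energy} and \ref{assumption:gradientflow} is essentially the same (the paper cites Proposition 23.1 of \cite{Vi09} for EVI rather than \cite{MuSa20}). The difference lies in Assumption \ref{assumption:regularized_geodesics}. You route through Proposition \ref{proposition:energy_C1} using approximating geodesics with a gradient-flow-regularized endpoint $S(\hat\theta)\pi$; the paper instead verifies \eqref{eq: energy directional derivative} \emph{directly} for the geodesic $\geo{\rho}{\pi}$ itself with $\theta=0$, via a one-line Cauchy--Schwarz argument (this is Lemma \ref{lemma:expression_slope_Riemann}): since $\cE$ is $C^1$ on $E_0$, the directional derivative equals $\ip{\grad\cE(\rho)}{\geod{\rho}{\pi}(0)}_{g(\rho)}\le\vn{\grad\cE(\rho)}_{g(\rho)}\,d(\rho,\pi)=|\partial\cE|(\rho)\,d(\rho,\pi)$. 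No approximation curve and no Proposition \ref{proposition:energy_C1} is needed. Your worry that $\geo{\rho}{\pi}$ may leave $E_0$ is unfounded: $\kappa$-convexity along a geodesic with endpoints in $\cD(\cE)$ gives a finite upper bound for $\cE$ along the whole segment, so the geodesic stays in $\cD(\cE)=E_0$. Thus the detour through flowed endpoints buys nothing here; your argument is correct but longer, and it trades the immediate Cauchy--Schwarz step for a more delicate angle-condition estimate on a curved manifold (continuity of $\exp_\rho^{-1}$ suffices, but the ``$1$-Lipschitz dependence'' you invoke does not hold in general).
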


Before giving the proof, we start with an auxiliary result that relates the slope to directional derivatives.
\begin{definition}
Let $\phi$ be a lower semi-continuous functional. Suppose $x \in \cD(\phi)$. For a geodesic $\geo{x}{y}$ denote the directional derivative of $\phi$ along the geodesic $\geo{x}{y}$ by
\begin{equation*}
    \phi'(x,\geo{x}{y}) := \liminf_{t \downarrow 0} \frac{\phi(\geo{x}{y}(t)) - \phi(x)}{t}.
\end{equation*}
\end{definition}

\begin{lemma} \label{lemma:expression_slope_Riemann}
If $\phi$ is $\kappa$-convex on geodesics, then
\begin{equation*}
    |\partial \phi| (x) = \vn{\grad \phi(x)}_{T_{x}E_0}.
\end{equation*}
and Assumption \ref{assumption:regularized_geodesics} is satisfied for $\phi$.
\end{lemma}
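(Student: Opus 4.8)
The plan is to establish the two assertions in turn. Throughout, $\phi$ is lower semicontinuous with $\cD(\phi)=E_0$, continuously differentiable on the Riemannian manifold $E_0$, and (by the hypothesis of the lemma) $\kappa$-convex along geodesics; ``Assumption~\ref{assumption:regularized_geodesics} for $\phi$'' means the statement of that assumption with $\cE$ replaced by $\phi$ and $I$ by $|\partial\phi|^2$. A preliminary observation that organizes everything: since $\phi\equiv+\infty$ off $E_0$, in the local slope $|\partial\phi|(\rho)=\limsup_{\pi\to\rho}(\phi(\rho)-\phi(\pi))^+/d(\rho,\pi)$ only points $\pi\in E_0$ contribute (the numerator vanishes for $\pi\notin E_0$), so we may work entirely inside $E_0$ and use its Riemannian structure, in particular that $d|_{E_0}$ is the Riemannian distance and nearby points of $E_0$ are joined by minimizing geodesics that stay in $E_0$.

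\textit{Step 1: the slope formula.} For the lower bound $|\partial\phi|(\rho)\ge\vn{\grad\phi(\rho)}_{T_{\rho}E_0}$, if $\grad\phi(\rho)=0$ there is nothing to prove; otherwise let $\eta$ be the geodesic issuing from $\rho$ with unit initial velocity $-\grad\phi(\rho)/\vn{\grad\phi(\rho)}_{T_{\rho}E_0}$, which for small $t>0$ stays in $E_0$ and is minimizing, so $d(\rho,\eta(t))=t$; since $\phi$ is $C^1$ along $\eta$, $\lim_{t\downarrow0}\frac{\phi(\rho)-\phi(\eta(t))}{t}=-\ip{\grad\phi(\rho)}{\dot\eta(0)}_{T_{\rho}E_0}=\vn{\grad\phi(\rho)}_{T_{\rho}E_0}$, whence the claim. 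For the upper bound, given $\pi\in E_0$ near $\rho$, let $\gamma$ be the constant-speed minimizing geodesic from $\rho$ to $\pi$ inside a small geodesic ball (so $|\dot\gamma|\equiv d(\rho,\pi)$); then $\phi(\rho)-\phi(\pi)=-\int_0^1\ip{\grad\phi(\gamma(s))}{\dot\gamma(s)}_{T_{\gamma(s)}E_0}\,ds\le d(\rho,\pi)\sup_{s\in[0,1]}\vn{\grad\phi(\gamma(s))}_{T_{\gamma(s)}E_0}$, and as $\pi\to\rho$ the arcs $\gamma([0,1])$ shrink uniformly to $\rho$, so by continuity of $\grad\phi$ and of the metric the supremum tends to $\vn{\grad\phi(\rho)}_{T_{\rho}E_0}$; together with the vanishing contribution of points outside $E_0$ this gives $|\partial\phi|(\rho)\le\vn{\grad\phi(\rho)}_{T_{\rho}E_0}$. (One may instead bypass the local-geodesic bookkeeping by invoking the variational characterization of the slope of a $\kappa$-convex functional, Proposition~2.4.9 of \cite{AmGiSa08}, combined with monotonicity of the relevant incremental ratios along geodesics.)

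\textit{Step 2: Assumption~\ref{assumption:regularized_geodesics}.} Fix $\rho,\pi$ with $|\partial\phi|^2(\rho)+\phi(\pi)<\infty$, so $\rho,\pi\in E_0$. If $\rho=\pi$, take $\geo{\rho}{\pi}_\theta\equiv\rho$: \eqref{eq: angle condition} is trivial and \eqref{eq: energy directional derivative} reduces to $0\le|\partial\phi|(\rho)\theta$. If $\rho\ne\pi$, let $\geo{\rho}{\pi}$ be the constant-speed geodesic between them (its initial segment lying in $E_0$), and take $\geo{\rho}{\pi}_\theta:=\geo{\rho}{\pi}$ for every $\theta>0$ — no regularization is needed because $\phi$ is already $C^1$ along it. Then \eqref{eq: angle condition} holds trivially with limit $0$, and using $C^1$-regularity of $\phi$ along $\geo{\rho}{\pi}$, $|\geod{\rho}{\pi}(0)|_{T_{\rho}E_0}=d(\rho,\pi)$, Cauchy--Schwarz, and Step~1,
\[
\liminf_{t\downarrow0}\frac{\phi(\geo{\rho}{\pi}(t))-\phi(\rho)}{t}=\ip{\grad\phi(\rho)}{\geod{\rho}{\pi}(0)}_{T_{\rho}E_0}\le\vn{\grad\phi(\rho)}_{T_{\rho}E_0}\,d(\rho,\pi)=|\partial\phi|(\rho)\,d(\rho,\pi)\le|\partial\phi|(\rho)\big(d(\rho,\pi)+\theta\big),
\]
which is \eqref{eq: energy directional derivative}. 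Equivalently, one may appeal to Proposition~\ref{proposition:energy_C1}: its hypothesis \ref{item:prop:energy_C1_convex} is exactly the $\kappa$-convexity assumption, \ref{item:prop:energy_C1_angle} is trivial for the choice $\geo{\rho}{\pi}_\theta=\geo{\rho}{\pi}$, and \ref{item:prop:energy_C1_key_limit} follows from Step~1 and the continuity of $\grad\phi$ along the geodesic.

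\textit{Expected main obstacle.} The only delicate point is the upper bound $|\partial\phi|(\rho)\le\vn{\grad\phi(\rho)}_{T_{\rho}E_0}$: one must control a $\limsup$ taken over \emph{all} nearby points, which forces one to know that nearby points of $E_0$ are joined by minimizing geodesics staying in $E_0$ and that $\grad\phi$ is continuous along them. The $\kappa$-convexity hypothesis — through Proposition~2.4.9 of \cite{AmGiSa08} — is precisely what makes this robust and frees the argument from case-by-case geometric estimates. Once Step~1 is in hand, both the lower bound and the verification of Assumption~\ref{assumption:regularized_geodesics} are routine consequences of $C^1$-regularity and standard Riemannian facts.
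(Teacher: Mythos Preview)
Your proposal is correct and follows essentially the same approach as the paper: both establish the slope equality by pairing a Cauchy--Schwarz upper bound with a lower bound obtained by moving along the geodesic in the gradient direction, and both verify Assumption~\ref{assumption:regularized_geodesics} by taking $\geo{\rho}{\pi}_\theta=\geo{\rho}{\pi}$ and applying Cauchy--Schwarz to the directional derivative. The only cosmetic difference is in the upper bound of Step~1: the paper invokes the variational characterization of the slope of a $\kappa$-convex functional (from \cite{MuSa20}) and then applies Cauchy--Schwarz once, whereas you give a direct argument via the fundamental theorem of calculus along short minimizing geodesics and continuity of $\grad\phi$; you even mention the paper's route as an alternative, so the two proofs are substantively identical.
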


\begin{proof}
For any two points $x,y \in \cD(\cE)$ we will derive \eqref{eq: angle condition} and \eqref{eq: energy directional derivative} with $\theta = 0$ for a geodesic $\geo{x}{y}$. Using the $\kappa$-convexity of $\phi$ on geodesics, we derive as in \cite[Section 2.3]{MuSa20} that
\begin{equation*}
    |\partial \phi| (x) := \sup_{y \in \cD(\phi), \text{ geodesics } \geo{x}{y}} \frac{\phi'(x,\geo{x}{y})}{d(x,y)}.
\end{equation*}
As $\cE$ is continuously differentiable on the domain of $\cE$, we can can obtain an upper bound on the directional derivative by using the Cauchy-Schwarz inequality 
\begin{equation*}
    \phi'(x,\geo{x}{y}) = \lim_{t \downarrow 0} \frac{\phi(\geo{x}{y}(t)) - \phi(x)}{t} = \ip{\grad \phi(x)}{\geod{x}{y}(0)} \leq \vn{\grad \phi(x)}_{T_xE_0} \vn{{\geod{x}{y}}(0)}_{T_x E_0}. 
\end{equation*}
As $\geo{x}{y}$ is a length-minimizing geodesic, we have $\vn{\geod{x}{y}(0)}_{T_x E_0} = d(x,y)$, so that
\begin{equation*}
    |\partial \phi| (x) \leq \vn{\grad \phi(x)}_{T_xE_0}.
\end{equation*}
To establish the converse inequality, recall that on a Riemannian manifold geodesics are locally length minimizing. Thus there is some $\delta > 0$ such that the geodesic (in the Riemannian sense of the word) $\gamma : [0,1] \rightarrow E_0$ started at $x$ in the direction $\grad \phi(x)$ of length $\delta$ satisfies $d(\gamma(0),\gamma(1)) = \delta$, and is thus a geodesic in our sense of the word. A direct computation yields that
\begin{equation*}
    \dot{\gamma}(0) = \frac{\delta}{\vn{\grad \phi(x)}_{T_xE_0}} \grad \phi(x)
\end{equation*}
which implies
\begin{equation*}
    \phi'(x,\geo{x}{y}) = \lim_{t \downarrow 0} \frac{\phi(\gamma(t)) - \phi(x)}{t} = \ip{\grad \phi(x)}{\dot{\gamma}(0)} =  \delta \vn{\grad \cE(x)}_{T_xE_0}.
\end{equation*}
We can conclude that $|\partial \phi| (x) \leq \vn{\grad \phi(x)}_{T_xE_0}$.

\bigskip

For the proof of Assumption \ref{assumption:regularized_geodesics}, we can take for all $x,y$ and $\theta$ the geodesic $\geo{x}{y}$ so that \eqref{eq: angle condition} is satisfied. Note that \eqref{eq: energy directional derivative} can be verified using Cauchy-Schwarz as in the first part of this proof.
\end{proof}

\begin{proof}[Proof of Proposition \ref{proposition_assumption_Riemannian_manifold}]
It suffices to verify Assumptions \ref{assumption:distance_and_energy}, \ref{assumption:gradientflow} and \ref{assumption:regularized_geodesics}. Assumption \ref{assumption:distance_and_energy} is immediate. The gradient flow for $\cE$ can be constructed by local arguments and by construction it remains in $\cD(\cE)$. Assumption \ref{assumption:gradientflow}, or in other words, that the gradient flow satisfies \eqref{item:ass_EVI}, follows by Proposition 23.1 in \cite{Vi09}. Assumption \ref{assumption:regularized_geodesics} follows from Lemma \ref{lemma:expression_slope_Riemann}. 
\end{proof}

For completeness, we verify the assumptions corresponding to the Hamiltonian of \eqref{eqn:CIR_Hamiltonian}. 

\begin{lemma} 
Assumption \ref{assumption:Riemannian_manifold} is satisfied for $E = \bR^+$, $\cE(x) = - \mu \log(x) + x - (\mu - \mu \log \mu)$ and $d(x,y) = 2|\sqrt{x}-\sqrt{y}|$.  
\end{lemma}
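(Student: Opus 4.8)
The goal is to verify the three conditions of Assumption \ref{assumption:Riemannian_manifold} for the triple $(E,d,\cE)$ with $E = \bR^+$, $d(x,y) = 2|\sqrt{x}-\sqrt{y}|$, and $\cE(x) = -\mu\log x + x - (\mu - \mu\log\mu)$. The plan is to proceed condition by condition. For condition (a), I would first identify $E_0 := \cD(\cE)$. Since $\cE(x)$ is finite precisely when $x > 0$ (the $\log$ blows up only at $0$), we have $E_0 = (0,\infty)$, which is dense in $E = \bR^+ = [0,\infty)$ (or $(0,\infty)$ depending on the convention; in either case density is clear). Then I would check that $(E_0, d)$ is a smooth Riemannian manifold: the map $x \mapsto 2\sqrt{x}$ is a smooth diffeomorphism from $(0,\infty)$ onto $(0,\infty)$, and under this change of coordinates $d$ becomes the standard Euclidean distance $|2\sqrt x - 2\sqrt y|$; hence $(E_0,d)$ is isometric to an open interval with its flat metric, which is a smooth Riemannian manifold. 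Concretely, writing $r = 2\sqrt x$, the metric tensor in the $x$-coordinate is $g(x) = (dr/dx)^2 = (1/\sqrt x)^2 = 1/x$, recovering the singular-at-$0$ metric mentioned in the discussion preceding the lemma.

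For condition (b), I would simply compute $\cE'(x) = -\mu/x + 1$, which is continuous (indeed smooth) on $E_0 = (0,\infty)$, so $\cE$ is continuously differentiable there. For condition (c), the $\kappa$-convexity of $\cE$ along geodesics in $E_0$, the cleanest approach is to pass to the flat coordinate $r = 2\sqrt x$, i.e. $x = r^2/4$. Geodesics of $(E_0,d)$ are, in this coordinate, straight-line segments $r(t) = (1-t)r_0 + t r_1$ traversed at constant speed. So I would express $\cE$ as a function of $r$: $\tilde\cE(r) := \cE(r^2/4) = -\mu\log(r^2/4) + r^2/4 - (\mu-\mu\log\mu) = -2\mu\log r + r^2/4 + \text{const}$. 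Then $\kappa$-convexity of $\cE$ along $d$-geodesics is equivalent to ordinary $\kappa$-convexity of $\tilde\cE$ as a function of $r$ on $(0,\infty)$ (with $\kappa$ the same constant, since $d$ corresponds to $|r_0 - r_1|$). Computing $\tilde\cE''(r) = 2\mu/r^2 + 1/2 \geq 1/2 > 0$ for all $r > 0$, we see $\tilde\cE$ is $\tfrac12$-convex, hence $\kappa$-convex with, say, $\kappa = \tfrac12$ (or any $\kappa \le \tfrac12$; one can also just say $\kappa$-convex for some $\kappa > 0$). This establishes (c).

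The one point requiring a little care — and the place I would expect the only genuine (if minor) obstacle — is the precise relationship between "$\kappa$-convex along geodesics of $(E_0,d)$" in the sense of Definition \ref{definition:kappa_convex} and "$\tilde\cE$ is $\kappa$-convex in the ordinary sense in the variable $r$". This follows because the isometry $x \mapsto r = 2\sqrt x$ maps constant-speed $d$-geodesics to constant-speed Euclidean geodesics (affine segments in $r$) and preserves squared distances, $d^2(\gamma(0),\gamma(1)) = |r_0 - r_1|^2$, so the defining inequality transports verbatim. Once this identification is made, the computation of $\tilde\cE''$ closes the argument. I would also remark (though it is not strictly needed for this lemma, only for the downstream application via Proposition \ref{proposition_assumption_Riemannian_manifold}) that the gradient flow of $\cE$ does indeed correspond to the drift $\dot x = \mu - x$ in \eqref{eqn:CIR_Hamiltonian}: in the $x$-coordinate, $\otgrad$-type computation gives $\grad \cE(x) = g(x)^{-1}\cE'(x) = x(-\mu/x + 1) = x - \mu$, so $\dot x = -\grad\cE(x) = \mu - x$, matching the linear drift, and the flow stays in $(0,\infty)$ since $\cE(0) = +\infty$ acts as a barrier.
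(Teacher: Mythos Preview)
Your proof is correct and complete. For parts (a) and (b) you follow essentially the same line as the paper, identifying $E_0=(0,\infty)$, noting its density in $E=[0,\infty)$, recognizing the Riemannian tensor $g(x)=x^{-1}$, and observing that $\cE$ is smooth on $E_0$.

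For part (c) you take a genuinely different route. The paper verifies $\kappa$-convexity via an infinitesimal monotonicity criterion (citing Proposition~16.2 of \cite{Vi09}): it computes
\[
\ip{-\grad\cE(x)}{\grad_x \tfrac{1}{2}d^2(x,y)}_{g(x)} - \ip{-\grad\cE(y)}{-\grad_y \tfrac{1}{2}d^2(x,y)}_{g(y)}
= -2\!\left(\tfrac{\mu}{\sqrt{xy}}+1\right)(\sqrt{x}-\sqrt{y})^2
\le -\tfrac{1}{2}d^2(x,y),
\]
working in the original coordinate $x$. You instead exploit the global isometry $x\mapsto r=2\sqrt{x}$ onto a Euclidean interval, turning constant-speed $d$-geodesics into straight-line segments, and then simply compute $\tilde\cE''(r)=2\mu/r^2+\tfrac12\ge\tfrac12$. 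Your approach is more elementary and fully self-contained (no external reference needed), while the paper's computation stays intrinsic and illustrates the Riemannian machinery. The numerical value of $\kappa$ you obtain ($\tfrac12$) differs from the paper's stated $\kappa=1$; your value is the one that follows directly from $\tilde\cE''\ge\tfrac12$, but in any case the lemma only requires $\kappa$-convexity for \emph{some} $\kappa\in\bR$, so this discrepancy is immaterial. One small remark: since Assumption~\ref{assumption:Riemannian_manifold} presupposes completeness of $(E,d)$, the convention here must be $\bR^+=[0,\infty)$; your hedging on this point is unnecessary.
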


Note that the Hamiltonian of \eqref{eqn:CIR_Hamiltonian} is indeed represented by this choice of objects. In particular, note that $\grad \cE(x) = g^{-1}(x) \cE'(x) = x (-\mu/x + 1) = x - \mu$.

\begin{proof}
The functional $\cE$ is smooth and finite on $E_0 := (0,\infty)$. Working in the natural global chart, we can define a Riemannian metric using $g(x) = x^{-1}$, or equivalently $\ip{v}{w}_{g(x)} :=  x^{-1} vw$ on the tangentbundle at $x$. This local metric indeed gives the global metric $d$ of the lemma on $(0,\infty)$, which can then be extended by continuity to the boundary $0$.

\smallskip

We next verify the convexity of $\cE$. As $\cE(0) = \infty$, it suffices to consider geodesics that remain in $(0,\infty)$. Working infinitesimally and considering the geodesic from $x$ to $y$, see Proposition 16.2  of \cite{Vi09}, we verify 
\begin{align*}
& \ip{- \grad{\cE}(x)}{\grad_x \frac{1}{2}d^2(x,y)}_{g(x)} - \ip{- \grad{\cE}(y)}{- \grad_y \frac{1}{2}d^2(x,y)}_{g(y)} \\
& \quad =   2\left(\mu-x\right) \left(1 - \frac{\sqrt{y}}{\sqrt{x}}\right) - 2 \left(- \left(\mu-y\right)\left(1 - \frac{\sqrt{y}}{\sqrt{x}}\right)\right) \\
& \quad  = - 2 \left(\frac{\mu}{\sqrt{xy}} + 1 \right) \left(\sqrt{x} - \sqrt{y}\right)^2 \\
& \leq -2 \left(\sqrt{x} - \sqrt{y}\right)^2 = - \frac{1}{2} d^2(x,y),
\end{align*}
implying that $\cE$ is $1$-convex. 
\end{proof}

\subsection{The Wasserstein space}

We consider $E=\cP_2(\R^d)$, which we equip with the Kantorovich-Wasserstein distance $W_2(\cdot,\cdot)$ of order two, defined by
\begin{equation*}
    W^2_2(\mu,\nu) = \inf_{\pi\in\Pi(\mu,\nu)} \int |x-y|^2 \pi(\De x\De y).
\end{equation*}
Following \cite{AmGiSa08} we consider an energy functional $\cE$ which is the sum of an internal energy, a potential energy and an interaction energy term. More precisely, we consider functions $F:\R_{+}\rightarrow \R$, $V:\R^d\rightarrow\R$, $W:\R^d\rightarrow\R$ such that
\begin{assumption}[McCann's condition]\label{ass: OT energy functional}
\begin{enumerate}[(a)] 
    \item $F:[0,+\infty)\rightarrow \R$ is convex, differentiable with superlinear growth. It satisfies the doubling condition    
    \[\exists C>0: \quad F(z+w) \leq C(1+F(z)+F(w)), \quad \forall z,w\geq 0. \] 
    Moreover we assume that 
    \[ s\mapsto s^dF(s^{-d}) \quad \text{is convex and increasing on $(0,+\infty)$}\]
        and 
        \[ F(0)=0, \quad \lim_{s\rightarrow 0} F(s)/s^{-\alpha}>-\infty, \quad \text{for some $\alpha>\frac{d}{d+2}$}.\]
    \item $V:\R^d\rightarrow (-\infty,+\infty]$ is lower semi-continuous, $\kappa_V$-convex for some $\kappa_V\in \R$, with proper domain that has nonempty interior.
    \item $W:\R^d\rightarrow[0,\infty)$ is an even continuously differentiable $\kappa_W$-convex function for some $\kappa_{W} \geq 0$\footnote{We impose $\kappa_W\geq 0$ as this condition allows us to directly apply the results of \cite{AmGiSa08}. However, it is very likely that this assumption is not necessary and that $\kappa_{W}\in\R$ is enough for Theorem \ref{thm: OT comparision}  to hold.} and satisfies the doubling condition 
    \[ \exists C>0: \quad W(x+y) \leq C(1+W(x)+W(y)), \quad \forall x,y\in \R^d.\]
\end{enumerate}
\end{assumption}
We define our energy functional $\cE$ by

\begin{equation}\label{eq: OT energy}
    \cE(\rho) := \int  F\Big( \frac{\De \rho}{\De \mathscr{L}^d}(x)\Big)\De x + \int V(x)\rho(\De x) + \frac12\int W(x-y) \rho(\De x)\otimes\rho(\De y), 
\end{equation}
setting $\cE(\rho)=+\infty$ as soon as $\rho$ is not absolutely continuous w.r.t the Lebesgue measure $\mathscr{L}^d$. 
The gradient flow of functionals satisfying McCann's condition has attracted lots of interest over the past two decades, because of their connection with PDEs. Indeed, the gradient flow of Boltzmann's entropy $F(s)=s\log s $provides with a variational interpretation of the heat equation \cite{JoKiOt98}, whereas the gradient flow of R\'eny's entropy ($F(s)=\frac{1}{\alpha-1}s^{\alpha}$) relates to the porous medium equation in the same way \cite{Ot01}.

\begin{theorem}\label{thm: OT comparision}
Let $(E,d)=(\cP_2(\R^d),W_2(\cdot,\cdot))$ and $\cE$ be defined by \eqref{eq: OT energy} with $F,V,W$ satisfying Assumption \ref{ass: OT energy functional}. Then the conclusion of Theorem \ref{theorem:comparison_principle_tildeoperators} hold with $\kappa = \kappa_V + \kappa_W$. 
\end{theorem}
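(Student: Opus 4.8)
The strategy is to verify Assumptions \ref{assumption:distance_and_energy}, \ref{assumption:gradientflow} and \ref{assumption:regularized_geodesics} for the triple $(E,d,\cE)=(\cP_2(\R^d),W_2,\cE)$ and then invoke Theorem \ref{theorem:comparison_principle_tildeoperators}. Assumption \ref{assumption:distance_and_energy} is classical: $(\cP_2(\R^d),W_2)$ is a complete geodesic space (Otto's geodesics are given by displacement interpolation, see \cite{AmGiSa08}), and under Assumption \ref{ass: OT energy functional} the functional $\cE$ in \eqref{eq: OT energy} has proper effective domain (e.g. smooth compactly supported densities lie in $\cD(\cE)$ by the doubling conditions on $F$ and $W$ together with local boundedness of $V$ on the interior of its domain) and is lower semi-continuous with respect to $W_2$; lower semicontinuity of the internal, potential and interaction parts is established in Chapter 9 of \cite{AmGiSa08}, using McCann's condition $s\mapsto s^dF(s^{-d})$ convex increasing for the internal energy, lower semicontinuity of $V$ for the potential part, and the doubling plus continuity of $W$ for the interaction part.

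\textbf{Existence of the EVI gradient flow.} For Assumption \ref{assumption:gradientflow} I would appeal directly to the theory of \cite{AmGiSa08}: under McCann's condition on $F$, $\kappa_V$-convexity of $V$, and $\kappa_W$-convexity of the even function $W$ with $\kappa_W\geq 0$, the functional $\cE$ is $\lambda$-convex along generalized geodesics with $\lambda=\kappa_V+\kappa_W$ (Proposition 9.3.2 and Theorem 9.3.9 of \cite{AmGiSa08}), and the minimizing-movement scheme converges to a curve that is the unique gradient flow satisfying the evolution variational inequality \eqref{item:ass_EVI} with $\kappa=\kappa_V+\kappa_W$ (Theorem 11.1.4 and Theorem 11.2.1 of \cite{AmGiSa08}). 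This gives a globally defined flow $S(t):\cP_2(\R^d)\to\cP_2(\R^d)$, in particular $\overline{\cD(\cE)}=\cP_2(\R^d)$. Here one should note that convexity \emph{along generalized geodesics} is exactly the hypothesis under which \cite{AmGiSa08} produces an EVI flow, and it is stronger than convexity along ordinary $W_2$-geodesics, which is why Assumption \ref{assumption:regularized_geodesics} — rather than a naive convexity-along-all-geodesics statement — is the right abstract condition.

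\textbf{Verification of Assumption \ref{assumption:regularized_geodesics}.} This is the main point, and I would deduce it from Proposition \ref{proposition:energy_C1}. Fix $\rho,\pi$ with $I(\rho)+\cE(\pi)<\infty$; in particular $\rho\in\cD(|\partial\cE|)$, so by the characterization of the slope in \cite{AmGiSa08} (Theorem 10.4.13), $\rho$ has a density with the required Sobolev-type regularity and $|\partial\cE|(\rho)=\vn{\nabla\frac{\delta\cE}{\delta\rho}}_{L^2(\rho)}$. To build the regularizing curves $\geo{\rho}{\pi}_\theta$, I would first regularize the target: replace $\pi$ by $\pi_\theta:=S(\hat\theta)\pi$, a point on the gradient flow started at $\pi$, where $\hat\theta>0$ is chosen so small that $W_2(\pi_\theta,\pi)\leq\theta$ (possible since $t\mapsto S(t)\pi$ is continuous with $S(0)\pi=\pi$); then take $\geo{\rho}{\pi}_\theta$ to be the constant-speed displacement geodesic from $\rho$ to $\pi_\theta$. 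The angle condition \eqref{eq: angle condition} follows because both $\geo{\rho}{\pi}$ and $\geo{\rho}{\pi}_\theta$ are $W_2$-geodesics emanating from $\rho$ with endpoints at distance $\leq\theta$, so a displacement-interpolation (or stability of optimal plans) estimate gives $W_2(\geo{\rho}{\pi}_\theta(t),\geo{\rho}{\pi}(t))\leq t\,W_2(\pi_\theta,\pi)+o(t)\leq \theta t+o(t)$; hence hypothesis \ref{item:prop:energy_C1_angle} holds. Hypothesis \ref{item:prop:energy_C1_convex}, that $\cE$ is $\kappa$-convex along $\geo{\rho}{\pi}_\theta$, holds because displacement geodesics are (special cases of) generalized geodesics based at $\rho$ and $\cE$ is $\kappa$-convex along all such generalized geodesics by \cite[Prop. 9.3.2, Thm. 9.3.9]{AmGiSa08}; alternatively one may note $\cE$ is $\kappa$-convex along \emph{every} displacement geodesic under McCann's condition for the internal term (Prop. 9.3.9) combined with geodesic convexity of $V$ and $W$ pulled back along the plan. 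The remaining hypothesis \ref{item:prop:energy_C1_key_limit} — that $\geo{\rho}{\pi}_\theta(t)\in\cD(|\partial\cE|)$ for all $t$ and $|\partial\cE|(\geo{\rho}{\pi}_\theta(t))\to|\partial\cE|(\rho)$ as $t\downarrow0$ — is the delicate one: one needs that the density along the displacement interpolation between the regular measure $\rho$ and the regular measure $\pi_\theta=S(\hat\theta)\pi$ retains enough regularity for the slope to be finite, and that the slope is (lower semi-)continuous along this curve. I would handle this by using that $I=|\partial\cE|^2$ is a displacement-convex functional on its domain (it is, by McCann-type arguments, a consequence of the $\lambda$-convexity structure; see the discussion around \cite[Thm. 10.4.13]{AmGiSa08} and Remark after Proposition \ref{proposition:energy_C1}), so that $|\partial\cE|$ is finite and bounded along $\geo{\rho}{\pi}_\theta$, combined with lower semicontinuity of $|\partial\cE|$ with respect to $W_2$; together these force $\limsup_{t\downarrow0}|\partial\cE|(\geo{\rho}{\pi}_\theta(t))\leq|\partial\cE|(\rho)$ and $\liminf\geq|\partial\cE|(\rho)$, giving the limit. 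With all three hypotheses of Proposition \ref{proposition:energy_C1} in place, Assumption \ref{assumption:regularized_geodesics} follows, and Theorem \ref{theorem:comparison_principle_tildeoperators} then yields the claim with $\kappa=\kappa_V+\kappa_W$.

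\textbf{Main obstacle.} The crux is Assumption \ref{assumption:regularized_geodesics}, and within it the convergence $|\partial\cE|(\geo{\rho}{\pi}_\theta(t))\to|\partial\cE|(\rho)$: one must argue carefully that the displacement interpolation between two regular measures stays in the domain of the slope and that the slope does not jump down in the limit $t\downarrow0$. Everything else reduces to invoking the by-now standard results of \cite{AmGiSa08} on existence of the EVI flow and on displacement convexity under McCann's condition.
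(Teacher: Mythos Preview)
Your overall strategy (verify Assumptions \ref{assumption:distance_and_energy}, \ref{assumption:gradientflow}, \ref{assumption:regularized_geodesics} and invoke Theorem \ref{theorem:comparison_principle_tildeoperators}) matches the paper, and your treatment of the first two assumptions is fine. The gap is in your verification of Assumption \ref{assumption:regularized_geodesics}, where you try to go through Proposition \ref{proposition:energy_C1} by regularizing the \emph{endpoint} $\pi\mapsto \pi_\theta=S(\hat\theta)\pi$ and taking $\geo{\rho}{\pi}_\theta$ to be the $W_2$-geodesic from $\rho$ to $\pi_\theta$.

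Two steps in this argument do not go through. First, the angle condition: writing the two geodesics as $((1-t)\bm i + t\bm r)_\#\rho$ and $((1-t)\bm i + t\bm r_\theta)_\#\rho$ you get $W_2(\geo{\rho}{\pi}_\theta(t),\geo{\rho}{\pi}(t))\leq t\,\|\bm r-\bm r_\theta\|_{L^2_\rho}$, so you need $\|\bm r-\bm r_\theta\|_{L^2_\rho}\leq\theta$. But you only arranged $W_2(\pi,\pi_\theta)\leq\theta$, and smallness of $W_2(\pi,\pi_\theta)$ does \emph{not} imply smallness of $\|\bm r-\bm r_\theta\|_{L^2_\rho}$: this is a stability-of-optimal-maps statement that fails in general. (Equivalently, $(\cP_2(\R^d),W_2)$ is not $\mathrm{CAT}(0)$ for $d\geq 2$, so the inequality $d(\gamma_1(t),\gamma_2(t))\leq t\,d(\gamma_1(1),\gamma_2(1))$ for geodesics from a common basepoint is not available.) Second, hypothesis \ref{item:prop:energy_C1_key_limit}: your claim that $I=|\partial\cE|^2$ is displacement convex along geodesics in its domain is not justified and is not known for the class of functionals in Assumption \ref{ass: OT energy functional}; without it you cannot get $\limsup_{t\downarrow 0}|\partial\cE|(\geo{\rho}{\pi}_\theta(t))\leq |\partial\cE|(\rho)$.

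The paper avoids both difficulties by \emph{not} going through Proposition \ref{proposition:energy_C1} at all in the Wasserstein case. Instead it regularizes the \emph{tangent vector}: since $\rho$ is regular, the optimal map $\bm r$ exists and $\bm r-\bm i\in L^2_\rho$; by density of smooth gradients one picks $\varphi^\theta\in C_c^\infty(\R^d)$ with $\|\nabla\varphi^\theta-(\bm r-\bm i)\|_{L^2_\rho}\leq\theta$ and sets $\geo{\rho}{\pi}_\theta(u)=(\bm i+u\nabla\varphi^\theta)_\#\rho$. The angle condition is then immediate from the $L^2_\rho$ bound, and \eqref{eq: energy directional derivative} is obtained by a direct computation of the directional derivative of $\cE$ along this smooth curve (integration by parts using \cite[Lemma 10.4.4]{AmGiSa08}) followed by Cauchy--Schwarz against the explicit element $\bm w$ with $\|\bm w\|_{L^2_\rho}^2=I(\rho)$ from \cite[Thm 10.4.13]{AmGiSa08}. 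No continuity of $|\partial\cE|$ along the curve is needed.
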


The fact that the hypothesis of Theorem \ref{theorem:comparison_principle_tildeoperators} are verified under Assumption \ref{ass: OT energy functional} is a consequence of well-known results, that we essentially take from \cite{AmGiSa08}. For the identification that $\kappa = \kappa_V + \kappa_W$, see Proposition 3.33 in \cite{AmGi13}.

\begin{proof}
We verify the hypothesis of Theorem \ref{theorem:comparison_principle_tildeoperators} one by one.
\begin{itemize}
    \item \underline{\emph{Verification of \ref{assumption:distance_and_energy}}}
    The completeness of $(\cP_2(\R^d),W_2(\cdot,\cdot))$ is proven at \cite[Prop. 7.1.5]{AmGiSa08}. The fact that it is a geodesic space is proven at \cite[Thm 2.10]{AmGi13}. 
    \item \underline{\textit{Verification of Assumption \ref{assumption:gradientflow}}} The existence of an \ref{item:ass_EVI} gradient flow on $\cP_2(\R^d)$ is granted by \cite[Theorems 11.2.1 and 11.2.8]{AmGiSa08}.
    \item \underline{\textit{Verification of Assumption \ref{assumption:regularized_geodesics}}}
    Let us proceed to verify condition \eqref{eq: angle condition}. Given $\rho$ s.t. $I(\rho)<+\infty$ we know that against the Lebesgue measure $\rho$ is regular in the sense of \cite[Def. 6.2.2]{AmGiSa08}. Thus, we can apply \cite[Thm 6.2.4]{AmGiSa08} to obtain the existence of a map $\mathbf{r}$  such that the (unique) geodesic $\geo{\rho}{\pi}$ takes the form
    \begin{equation*}
       \geo{\rho}{\pi}(t) = (\bm{i} + t(\bm r- \bm i))_{\#}\rho \quad \forall t\in[0,1],
    \end{equation*}  
    where $\bm i$ denotes the identity map. Moreover, thanks to \cite[Thm 6.1  ii)]{AmGi13} for any $\theta>0$ we can find $\varphi^{\theta}\in C^{\infty}_c(\R^d)$ such that  
     \begin{equation}\label{eq: OT comparison 1}
         |\nabla\varphi^{\theta}-(\mathbf{r}-\bm i)|_{L^2_{\rho}}\leq \theta. 
     \end{equation}
     Using either a direct calculation or the isometry property of  \cite[Thm 6.1]{AmGi13} we also find that if we define $\geo{\rho}{\pi}_{\theta}(u)= (\bm i + u  \nabla \varphi^{\theta})_{\#}\rho$ for $u$ small enough, then 
    \begin{equation*}
        \lim_{u\downarrow 0}\frac{W_2(\geo{\rho}{\pi}_{\theta}(u),\geo{\rho}{\pi}(u))}{u} \leq |\nabla\varphi^{\theta}-(\mathbf{r}-\bm i)|_{L^2_{\rho}}\leq \theta,
    \end{equation*}
    which is \eqref{eq: angle condition}. We now proceed to verify \eqref{eq: energy directional derivative}. By \cite[Thm 10.4.13]{AmGiSa08} we know that if $I(\rho)<+\infty$, then setting
    \begin{equation*}
        L_F(z)=zF'(z)-F(z)
    \end{equation*}
    we have that $L_F\big(\frac{\De \rho}{\De\mathscr{L}^d} \big)$ belongs to $W^{1,1}_{loc}$. Combining  \cite[Lemma 10.4.4 and Eqs (10.4.58), (10.4.59)]{AmGiSa08}\footnote{In particular, one can check that the hypothesis of Lemma 10.4.4 are verified with $\bm r_t=(1-t)\bm i+ t \nabla \varphi^{\theta}$ using, among other things, the fact that for $t$ small enough $\bm r_t$ is invertible, smooth, strongly convex and $(\bm r_t)_{\#}\rho \ll \mathscr{L}^d$. } 
    \begin{equation*}
    \begin{split}
        \lim_{u\downarrow 0} \frac{\cE(\geo{\rho}{\pi}_{\theta}(u)) - \cE(\rho)}{u} = \int -L_F\big(\frac{\De \rho}{\De\mathscr{L}^d} \big) \Delta\varphi^{\theta} \De \mathscr{L}^d\\
        +\int\langle \nabla V, \nabla\varphi^{\theta} \rangle \De\rho + \int \langle\nabla W * \rho, \nabla\varphi^{\theta}\rangle  \De\rho\\
        =\int \Big\langle\frac{1}{\frac{\De \rho}{\De\mathscr{L}^d}}\nabla L_F\big(\frac{\De \rho}{\De\mathscr{L}^d}\big) +\nabla V + \nabla W * \rho,\nabla\varphi^{\theta}\Big\rangle \De \rho.
    \end{split}
    \end{equation*}
    Applying again \cite[Thm 10.4.13]{AmGiSa08} we have that there exist $\bm w\in L^2_{\rho}$ such that 
    \begin{equation*}
    \begin{split}
        \int |\bm w|^2 \De\rho &= I(\rho),\\
        \bm w &= \frac{1}{\frac{\De \rho}{\De\mathscr{L}^d}}\nabla L_F\big(\frac{\De \rho}{\De\mathscr{L}^d}\big) +\nabla V + \nabla W * \rho \quad \rho\text{-a.e.}
        \end{split}
    \end{equation*} 
    But then by Cauchy Schwartz we find
    \begin{equation*}
        \lim_{u\downarrow 0} \frac{\cE(\geo{\rho}{\pi}_{\theta}(u)) - \cE(\rho)}{u} \leq \sqrt{I(\rho)}|\nabla \varphi^{\theta}|_{L^2_{\rho}} \leq  \sqrt{I}(\rho)(W_2(\rho,\pi)+\theta),
    \end{equation*}
    where to obtain the last inequality we used \eqref{eq: OT comparison 1}, the triangular inequality and the fact that $W_2(\rho,\pi) = \int|\bm r -\bm i|^2 \De \rho$. The proof of \eqref{eq: energy directional derivative} is now complete.
\end{itemize}

\end{proof}

\appendix
\section{Appendix}\label{appendix A}
\subsection{Ekeland's principle}

\begin{lemma}[Ekeland's principle] \label{lemma:Ekeland}
Let $K$ be an abstract set  and $\mathcal B:K\times K \rightarrow [0,+\infty)$ a function with the following properties: 
\begin{enumerate}[(i)]
    \item \label{item:Ekland_assumption1} $\mathcal B(x,x)=0$ for all $x\in K$
    \item \label{item:Ekland_assumption2} $ \mathcal B(x,z)\leq \mathcal B(x,y)+\mathcal B(y,z)$ for all $x,y,z\in K$.
    \item \label{item:Ekland_assumption3} For any sequence $(x_n)_{n\in \bN}\in K$ satisfying $\sum_{n\in\mathbb{N}}\mathcal B(x_{n+1},x_n)<+\infty$, there exists $x\in K$ such that $\lim_{n \rightarrow \infty}\mathcal B(x,x_n)=0$.
\end{enumerate}
Let $\mathcal G:K\rightarrow[-\infty,+\infty)$ be a bounded from above function, i.e.  $\sup_{x\in K}  \mathcal G (x)<+\infty $, such that: 
\begin {itemize}
\item if $(x_n)_{n\in \mathbb{N}}, x \in K$,  $\sum_{n\in \mathbb{N}} \mathcal B(x_{n+1},x_n)<+\infty$
 and   $\lim_{n\rightarrow +\infty}\mathcal B(x,x_n)=0$
then $$\mathcal G(x)\geq \limsup_{n}\mathcal G(x_n).$$
\end{itemize}

Then for each $\delta>0$ and any $\hat x\in K$ such that  $\mathcal G(\hat x)\neq-\infty$ there exists $x_\delta\in  K$ such that 
\begin{enumerate}[(1)]
    \item \label{item:Ekeland1} $\mathcal G(\hat x)+ \frac{1}{2}\delta {\mathcal B(x_\delta,\hat x)}\leq \mathcal G (x_\delta)$,
    \item \label{item:Ekeland2} $\sup_x \left\{\mathcal G(x) - \frac{1}{2} \delta {\mathcal B(x,x_\delta)}\right\} \leq \mathcal G (x_\delta)$. 
\end{enumerate}
Let us note as a corollary that the above statements have the following consequences
\begin{enumerate}[(a)]
    \item \label{item:Ekeland_delta_to_optimizer} Suppose that $\mathcal G(\hat x) \geq \sup_{x\in K} \mathcal G(x) - \frac{1}{2}\delta^2$, then $\mathcal B(x_\delta,\hat x)\leq \delta$.
    \item \label{item:Ekeland_unique_optimizer} For all $x \neq x_\delta$ we have $\mathcal G(x) - \delta {\mathcal B(x,x_\delta)} < \mathcal G (x_\delta)$.
    \item \label{item:Ekeland_unique_optimizer_convergence} Suppose that $(x_n)_{n\in \mathbb{N}}\in K$ is such that $\lim_{n \rightarrow \infty} \mathcal G(x_n) - \delta {\mathcal B(x_n,x_\delta)} = \mathcal G(x_\delta)$, then 
    
    $\lim_{n\rightarrow \infty} {\mathcal B(x_n,x_\delta)} = 0 $ and $\lim_{n\rightarrow \infty} \mathcal G(x_n) = \mathcal G(x_\delta)$.
\end{enumerate}
\end{lemma}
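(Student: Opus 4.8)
The plan is to prove the lemma by the classical iterative construction underlying Ekeland's variational principle, adapted to the abstract setting where $K$ carries no topology and $\mathcal B$ is neither symmetric nor assumed to separate points. Write $\epsilon := \delta/2$ and, for $y \in K$, set $S(y) := \{x \in K : \mathcal G(y) + \epsilon\, \mathcal B(x,y) \le \mathcal G(x)\}$; note $y \in S(y)$ by \eqref{item:Ekland_assumption1}. Starting from $x_0 := \hat x$, I would build a sequence $(x_n)_{n\in\mathbb N}$ inductively by choosing $x_{n+1} \in S(x_n)$ with $\mathcal G(x_{n+1}) \ge m_n - 2^{-n}$, where $m_n := \sup_{x \in S(x_n)} \mathcal G(x)$; here $m_n$ is finite since $\mathcal G$ is bounded above, and $m_n \ge \mathcal G(x_n) > -\infty$ since $x_n \in S(x_n)$ and, inductively, $\mathcal G(x_n) \ge \mathcal G(\hat x) > -\infty$.

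First I would record the structural facts. Using the triangle inequality \eqref{item:Ekland_assumption2} one checks $S(x_{n+1}) \subseteq S(x_n)$; since $x_{n+1} \in S(x_n)$, the sequence $\mathcal G(x_n)$ is non-decreasing and bounded above, hence converges to a finite $\ell$ with $\mathcal G(\hat x) \le \ell \le \sup_K \mathcal G$; and summing $\epsilon\, \mathcal B(x_{n+1},x_n) \le \mathcal G(x_{n+1}) - \mathcal G(x_n)$ telescopically gives $\sum_n \mathcal B(x_{n+1},x_n) < \infty$. Property \eqref{item:Ekland_assumption3} then produces $x_\delta \in K$ with $\mathcal B(x_\delta,x_n) \to 0$, and the lower-semicontinuity-type hypothesis on $\mathcal G$ yields $\mathcal G(x_\delta) \ge \limsup_n \mathcal G(x_n) = \ell$. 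The key bookkeeping step is to show $x_\delta \in \bigcap_n S(x_n)$: for fixed $n$ and $m \ge n$ one has $x_m \in S(x_n)$ by nestedness, and from $\mathcal B(x_\delta,x_n) \le \mathcal B(x_\delta,x_m) + \mathcal B(x_m,x_n)$ combined with $\epsilon\,\mathcal B(x_m,x_n) \le \mathcal G(x_m) - \mathcal G(x_n)$ one lets $m \to \infty$ to obtain $\epsilon\,\mathcal B(x_\delta,x_n) \le \ell - \mathcal G(x_n) \le \mathcal G(x_\delta) - \mathcal G(x_n)$, that is $x_\delta \in S(x_n)$. In particular $x_\delta \in S(x_0) = S(\hat x)$, which is exactly conclusion \eqref{item:Ekeland1}, and $\mathcal G(x_\delta) \le m_n \le \ell + 2^{-n}$ forces $\mathcal G(x_\delta) = \ell$.

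Next I would prove conclusion \eqref{item:Ekeland2} by contradiction: if some $y$ satisfied $\mathcal G(y) - \epsilon\,\mathcal B(y,x_\delta) > \mathcal G(x_\delta)$, then $y \in S(x_\delta)$; but combining $x_\delta \in S(x_n)$ with the defining inequality of $S(x_\delta)$ and \eqref{item:Ekland_assumption2} gives $S(x_\delta) \subseteq S(x_n)$ for every $n$, whence $\mathcal G(y) \le m_n \le \ell + 2^{-n} \to \mathcal G(x_\delta)$, contradicting $\mathcal G(y) > \mathcal G(x_\delta)$. Finally, the corollaries are short consequences: \eqref{item:Ekeland_delta_to_optimizer} follows from \eqref{item:Ekeland1} together with $\mathcal G(x_\delta) \le \sup_K \mathcal G \le \mathcal G(\hat x) + \tfrac12\delta^2$; for \eqref{item:Ekeland_unique_optimizer} one rewrites, using \eqref{item:Ekeland2}, $\mathcal G(x) - \delta\,\mathcal B(x,x_\delta) \le \mathcal G(x_\delta) - \epsilon\,\mathcal B(x,x_\delta)$, which is strict as soon as $\mathcal B(x,x_\delta) > 0$, the remaining degenerate case being ruled out because $\mathcal B$ separates points in the relevant applications (see Lemma \ref{lemma: tataru good for ekeland}); and \eqref{item:Ekeland_unique_optimizer_convergence} follows from the same inequality, since $\mathcal G(x_\delta) - \epsilon\,\mathcal B(x_n,x_\delta) \ge \mathcal G(x_n) - \delta\,\mathcal B(x_n,x_\delta) \to \mathcal G(x_\delta)$ forces $\mathcal B(x_n,x_\delta) \to 0$ and then $\mathcal G(x_n) \to \mathcal G(x_\delta)$.

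The main obstacle — the rest being the familiar Ekeland scheme — is that $K$ is a bare set and $\mathcal B$ is asymmetric, so there is no genuine notion of limit: the completeness-type property \eqref{item:Ekland_assumption3} and the lower-semicontinuity-type property of $\mathcal G$ must jointly play the role usually filled by a metric topology, and one has to invoke the triangle inequality in the correct order (in the construction we only ever control $\mathcal B(x_\delta,x_n)$ and $\mathcal B(x_{n+1},x_n)$, never $\mathcal B(x_n,x_\delta)$) so that the candidate $x_\delta$ indeed lands in all the sets $S(x_n)$.
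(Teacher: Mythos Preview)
Your argument is correct and is essentially the same as the paper's: the paper simply cites \cite{Ta92} for conclusions \eqref{item:Ekeland1} and \eqref{item:Ekeland2}, and the iterative construction you spell out (nested sets $S(x_n)$, monotone $\mathcal G(x_n)$, telescoping to get summability, then invoking \eqref{item:Ekland_assumption3} and the semicontinuity hypothesis) is precisely Tataru's proof adapted to the asymmetric $\mathcal B$. Your derivations of \eqref{item:Ekeland_delta_to_optimizer} and \eqref{item:Ekeland_unique_optimizer_convergence} match the paper's, and you are right to flag that \eqref{item:Ekeland_unique_optimizer} as literally stated needs $\mathcal B(x,x_\delta)>0$ whenever $x\neq x_\delta$; the paper glosses over this (``follows immediately''), and in the application it holds because the Tataru distance separates points.
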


\begin{remark} In particular, from \ref{item:Ekeland1} we deduce $\mathcal G(x_\delta)>-\infty$ and from \ref{item:Ekeland_unique_optimizer} we deduce that $x_\delta$ is the unique optimizer of $ \mathcal G(x) - \delta \mathcal B(x ,x_\delta)$.
\end{remark}

\begin{proof}
The statements \ref{item:Ekeland1} and \ref{item:Ekeland2} follow as in \cite{Ta92}, { using as $\mathcal B(x,y):= \mathcal B (y,x) $, $u (x):=-\mathcal G(x)$}, multiplying all terms by $-1$ and replacing $\delta$ by $\frac{1}{2}\delta$. From \ref{item:Ekeland1} and \ref{item:Ekeland2}, the consequences \ref{item:Ekeland_delta_to_optimizer} and \ref{item:Ekeland_unique_optimizer} follow immediately. We are left to prove \ref{item:Ekeland_unique_optimizer_convergence}.

Let $(x_n)_{n\in \mathbb{N}}\in K$ be as in \ref{item:Ekeland_unique_optimizer_convergence}. Then by statement  \ref{item:Ekeland2}, we have
\begin{equation*}
    0 \leq \mathcal G(x_\delta) - \mathcal G(x_n) + \frac{1}{2} \delta {\mathcal  B(x_n,x_\delta)}.
\end{equation*}
Thus,
\begin{align*}
    0 & \leq \frac{1}{2} \delta {\mathcal B(x_n,x_\delta)} \\
    & \leq \mathcal G(x_\delta) - \mathcal G(x_n) + \delta {\mathcal B(x_n,x_\delta)}.
\end{align*}
By assumption, the right hand side converges to $0$. Therefore, we also have  $$\lim_{n\to \infty}{\mathcal B(x_n,x_\delta)} = 0.$$ 



{Using again \ref{item:Ekeland2}, 
$$\mathcal G (x_\delta)\geq \mathcal G(x_n) - \frac{1}{2} \delta {\mathcal B(x_n,x_\delta)}\geq  \limsup_{n\to\infty}
\mathcal G(x_n) - \frac{1}{2} \delta {\mathcal B(x_n,x_\delta)}=\limsup_{n\to\infty}
\mathcal G(x_n).$$}

Moreover, by the assumption on the sequence $(x_n)_{n\in \mathbb{N}}$, we also have 
\begin{equation*}
    \liminf_{n\to\infty} \mathcal G(x_n) \geq \liminf_{n\to\infty} \mathcal G(x_n) - \delta {\mathcal B(x_n,x_\delta)} = \mathcal G (x_\delta).
\end{equation*}
We then  conclude that $\lim_{n\to\infty} \mathcal G(x_n) = \mathcal G(x_\delta)$.
\end{proof}

Let us show in the following lemma that Ekeland's principle can be applied to the Tataru distance.

\begin{lemma}\label{lemma: tataru good for ekeland}
The Tataru distance satisfies the assumptions of Lemma \ref{lemma:Ekeland}.
\end{lemma}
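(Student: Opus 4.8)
The plan is to check the three structural hypotheses (i)--(iii) of Lemma \ref{lemma:Ekeland} for the Tataru distance, and then to observe that the function $\cB$ used in the proof of Theorem \ref{theorem:comparison_principle_tildeoperators} inherits them coordinatewise. Properties (i) and (ii) are essentially free: choosing $t=0$ in the defining infimum gives $0\le d_T(\pi,\pi)\le d(\pi,\pi)=0$, while the triangle inequality $d_T(\rho,\nu)\le d_T(\rho,\mu)+d_T(\mu,\nu)$ is precisely Lemma \ref{lemma:triangle_inequality_Tataru}. Since $\cB(x,\tilde x)$ is a positive linear combination, over the four coordinates, of Tataru distances, it satisfies $\cB(x,x)=0$ and $\cB(x,z)\le\cB(x,y)+\cB(y,z)$ termwise; so the only real content is property (iii) (completeness of $\cB$), which again reduces to the analogous statement for $d_T$ in a single coordinate.

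For (iii) I would start from a sequence $(\rho_n)_{n\in\bN}\subseteq E$ with $\sum_n d_T(\rho_{n+1},\rho_n)<+\infty$ and use that the infimum defining $d_T$ is attained (the Remark following the definition) to pick, for each $n$, a time $t_n\ge 0$ with $d_T(\rho_{n+1},\rho_n)=t_n+e^{\hat\kappa t_n}d(\rho_{n+1},\rho_n(t_n))$. Discarding the nonnegative second summand gives $\sum_n t_n\le\sum_n d_T(\rho_{n+1},\rho_n)=:T<\infty$, so the tail times $s_n:=\sum_{k\ge n}t_k$ are finite, decrease to $0$, and satisfy $s_n\le T$. The candidate limit point is the $d$-limit of the shifted points $\rho_n(s_n)$: using the semigroup identity $\rho_n(s_n)=\rho_n(t_n)(s_{n+1})$ (from $s_n=t_n+s_{n+1}$) together with the contraction estimate \eqref{lemma:distance_contracting_under_gradient_flow} of Lemma \ref{lem:EVI_properties}, item \ref{item: EVI contraction}, one bounds
\begin{equation*}
d(\rho_n(s_n),\rho_{n+1}(s_{n+1}))\le e^{-\kappa s_{n+1}}\,d(\rho_n(t_n),\rho_{n+1})=e^{-\kappa s_{n+1}}e^{-\hat\kappa t_n}\bigl(d_T(\rho_{n+1},\rho_n)-t_n\bigr)\le e^{2|\kappa|T}\,d_T(\rho_{n+1},\rho_n),
\end{equation*}
where the last step uses $0\le s_{n+1},t_n\le T$ and $|\hat\kappa|\le|\kappa|$. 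Summing over $n$ shows $(\rho_n(s_n))_n$ is Cauchy in $(E,d)$, hence convergent to some $\rho\in E$ by completeness.

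It then remains to verify $\lim_n d_T(\rho,\rho_n)=0$: inserting the (suboptimal) time $s_n$ in the infimum defining $d_T(\rho,\rho_n)$ and using $\hat\kappa\le 0$,
\begin{equation*}
0\le d_T(\rho,\rho_n)\le s_n+e^{\hat\kappa s_n}d(\rho,\rho_n(s_n))\le s_n+d(\rho,\rho_n(s_n)),
\end{equation*}
and both terms vanish as $n\to\infty$ since $s_n\to 0$ and $\rho_n(s_n)\to\rho$ in $d$. Applying this coordinatewise and recombining with the fixed positive weights gives property (iii) for $\cB$, completing the proof. The main obstacle is exactly this step: because $d_T$ is only dominated by $d$ (Lemma \ref{lemma:estimates_Tataru}(a)) and not conversely, summability of the Tataru increments does \emph{not} make $(\rho_n)$ itself $d$-Cauchy, so one genuinely has to run the gradient flow forward by the tail times $s_n$ and exploit the $EVI$-contraction to produce a usable limit point — this is the metric-space analogue of Tataru's original construction, with the extra exponential weights needed to absorb the case $\kappa<0$.
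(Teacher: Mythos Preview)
Your proof is correct and follows essentially the same route as the paper's: after disposing of (i) and (ii) via $t=0$ and Lemma~\ref{lemma:triangle_inequality_Tataru}, both arguments pick optimal times $t_n$, form the tail sums $s_n=\sum_{k\ge n}t_k$, use the semigroup property together with the contraction estimate \eqref{lemma:distance_contracting_under_gradient_flow} to show that $(\rho_n(s_n))_n$ is $d$-Cauchy, and finish by plugging the suboptimal time $s_n$ into $d_T(\rho,\rho_n)$. Your explicit remark that the product function $\cB$ on $E^4$ inherits (i)--(iii) coordinatewise is a useful addition that the paper leaves implicit.
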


\begin{proof}
\ref{item:Ekland_assumption1} is trivial and  \ref{item:Ekland_assumption2} has been verified in Lemma \ref{lemma:triangle_inequality_Tataru}. Let us show 
\ref{item:Ekland_assumption3}.

Let $(\mu_n)_{n\in \mathbb{N}}\in E$ be such that $\sum_n d_T(\mu_{n+1},\mu_n) < \infty$,

Recall  we have seen that 

\begin{equation*}
    d_T(\mu,\nu) = \min_{t \geq 0} \left\{ t + e^{\hat{\kappa} t} d(\mu,\nu(t))\right\}
\end{equation*}

Thus, there exists a sequence $(t_n)_{n\in\mathbb{N}}\in [0, +\infty)$ such that

\begin{equation} \label{eqn:dT_for_Ekeland_main_estimate}
    \sum_n t_n + e^{\hat{\kappa}t_n} d(\mu_{n+1},\mu_n(t_n)) < \infty.
\end{equation}
For all $n\in \mathbb{N}$, set  $s_n : = \sum_{k=n}^{\infty} t_k$. Note that \eqref{eqn:dT_for_Ekeland_main_estimate}  implies that
\begin{gather}
    s_n \leq s_0 = \sum_n t_n =: T < \infty, \notag \\ \sum_n d(\mu_{n+1},\mu_n(t_n))=
    \sum_n e^{-\hat{\kappa}t_n}e^{\hat{\kappa}t_n}d(\mu_{n+1},\mu_n(t_n)) \leq e^{-\hat{\kappa}T} \sum_n e^{\hat{\kappa}t_n}  d(\mu_{n+1},\mu_n(t_n)) < \infty. \label{eqn:dT_for_Ekeland_control_on_distances}
\end{gather}
(Remember that $\hat{\kappa}\leq 0$.)

Let us consider the sequence $(\nu_n)_{n\in\mathbb{N}}\in [0, +\infty)\in E$ given by  $\nu_n := \mu_n(s_n)$ for all $n\in\mathbb N$.
It follows by equation \eqref{lemma:distance_contracting_under_gradient_flow} that
\begin{align*}
    \sum_n d(\nu_n,\nu_{n+1}) & = \sum_n d(\mu_n(s_n),\mu_{n+1}(s_{n+1})) \\
    & \leq \sum_n e^{-\kappa s_{n+1}} d(\mu_n(t_n),\mu_{n+1}) \\
    &\leq \sum_n e^{-\hat{\kappa} s_{n+1}} d(\mu_n(t_n),\mu_{n+1}) \\
    & \leq e^{- \hat{\kappa} T}  \sum_n  d(\mu_n(t_n),\mu_{n+1}) \\
    & < \infty.
\end{align*}
Therefore $(\nu_n)_{n\in\mathbb{N}}$ is a Cauchy sequence and converges  to a $\nu\in E$, i.e. $\lim_{n\to\infty}d(\nu,\mu_n(s_n) )=0$. Moreover 

\begin{align*}
0\leq \lim_{n\to \infty} d_T(\nu, \mu_{n})&=\lim_{n\to \infty} \inf_{t \geq 0} \left\{ t + e^{\hat{\kappa} t} d(\nu ,\mu_{n}(t))\right\}\\&\leq \lim_{n\to \infty} s_n + e^{\hat{\kappa}s_n} d(\nu ,\mu_n(s_n))=0.
\end{align*}

\end{proof}
\subsection{From optimizing sequences to optimizing points}
The following Lemma relates Definition \ref{definition:viscosity_solutions_HJ_sequences} to the classical definition stated in terms of optimizing points. We use the lemma in combination with Ekeland's principle in the proof of the comparison principle.

\begin{lemma} \label{lemma:visc_sol_optimizers_sequence_to_point}
	Consider a viscosity subsolution $u$ of equation \eqref{eqn:differential_equation_H1}. Let $(f,g) \in A_\dagger$ {and $(\pi_n)_{n\in \mathbb N}\in E$, be the sequence given by the definition of viscosity subsolution.} 
	Suppose  that: 
	\begin{itemize}
	    \item There exists  $\pi_0\in E$ such that  {$\lim_n \pi_n = \pi_0$} and
	    \begin{equation*}
	        u(\pi_0) - f(\pi_0) = \sup_\pi u(\pi) - f(\pi).
	    \end{equation*}
	    
	    
	\end{itemize}
	    Then we have 
	    \begin{equation*}
	        u(\pi_0) - \lambda g(\pi_0) - h^\dagger(\pi_0) \leq 0.
	    \end{equation*}
	    
	    Consider a viscosity supersolution $v$ of equation \eqref{eqn:differential_equation_H2}. Let $(f,g) \in A_\ddagger$  {and $(\pi_n)_{n\in \mathbb N}\in E$, be the sequence given by the definition of viscosity supersolution.} Suppose  that: 
	\begin{itemize}
	    \item There exists  $\pi_0\in E$ such that  {$\lim_n \pi_n = \pi_0$} and
	    \begin{equation*}
	        v(\pi_0) - f(\pi_0) = \inf_\pi v(\pi) - f(\pi).
	    \end{equation*}
	    
	\end{itemize}
	    Then we have 
	    \begin{equation*}
	        v(\pi_0) - \lambda g(\pi_0) - h^\ddagger(\pi_0) \geq 0.
	    \end{equation*}
	\end{lemma}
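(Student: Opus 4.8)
The statement to prove is Lemma~\ref{lemma:visc_sol_optimizers_sequence_to_point}, which converts the ``optimizing sequence'' formulation of viscosity sub/supersolutions (Definition~\ref{definition:viscosity_solutions_HJ_sequences}) into the classical ``optimizing point'' formulation, under the extra hypothesis that a given optimizing sequence actually converges to a point that is itself a maximizer (resp. minimizer).

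\textbf{Plan of proof.} The strategy is to exploit the specific structure of the test functions $f$ in $A_\dagger$ (here $H_\dagger$): for these test functions $f$ and the associated $g$, we will show that $f$ is continuous and $g$ is upper semi-continuous, which is exactly the regularity recorded in Definition~\ref{definition:HdaggerHddagger}. Then I would argue as follows. Let $(\pi_n)_{n\in\mathbb N}$ be the sequence furnished by the subsolution property for the pair $(f,g)\in H_\dagger$, so that
\begin{gather*}
\lim_{n\to\infty} u(\pi_n) - f(\pi_n) = \sup_\pi u(\pi) - f(\pi),\\
\limsup_{n\to\infty} u(\pi_n) - \lambda g(\pi_n) - h^\dagger(\pi_n) \le 0.
\end{gather*}
By hypothesis there is $\pi_0$ with $\lim_n \pi_n = \pi_0$ and $u(\pi_0) - f(\pi_0) = \sup_\pi u(\pi) - f(\pi)$. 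The first step is to deduce $\lim_n u(\pi_n) = u(\pi_0)$: since $u$ is upper semi-continuous we get $\limsup_n u(\pi_n) \le u(\pi_0)$, and since $f$ is continuous, $\lim_n f(\pi_n) = f(\pi_0)$, so
\[
\liminf_n u(\pi_n) = \liminf_n \big[(u(\pi_n) - f(\pi_n)) + f(\pi_n)\big] = \big(\sup_\pi u - f\big) + f(\pi_0) = u(\pi_0),
\]
using the first display. Combining, $\lim_n u(\pi_n) = u(\pi_0)$.

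\textbf{Conclusion step.} Now I would write
\[
u(\pi_0) - \lambda g(\pi_0) - h^\dagger(\pi_0) \le \limsup_n u(\pi_n) - \lambda \liminf_n g(\pi_n) - \lim_n h^\dagger(\pi_n),
\]
which is legitimate because $g \in USC(E)$ gives $\limsup_n g(\pi_n) \le g(\pi_0)$, hence $-\lambda g(\pi_0) \le -\lambda \limsup_n g(\pi_n)$; wait --- one must be slightly careful about the direction. The correct route: $g$ upper semi-continuous and $\pi_n\to\pi_0$ gives $g(\pi_0) \ge \limsup_n g(\pi_n)$, so $-\lambda g(\pi_0)\le -\lambda \limsup_n g(\pi_n) \le \liminf_n(-\lambda g(\pi_n))$. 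Also $h^\dagger$ is continuous (bounded, uniformly continuous in the setting of Theorem~\ref{theorem:comparison_principle_tildeoperators}; in the abstract statement $h^\dagger\in C_b(E)$), so $h^\dagger(\pi_0) = \lim_n h^\dagger(\pi_n)$. Therefore
\[
u(\pi_0) - \lambda g(\pi_0) - h^\dagger(\pi_0) \le \limsup_n\big[ u(\pi_n) - \lambda g(\pi_n) - h^\dagger(\pi_n)\big] \le 0,
\]
using superadditivity of $\limsup$ combined with the established limits for $u$ and $h^\dagger$, and finally the defining inequality of the subsolution. This gives the claimed inequality. The supersolution case is entirely symmetric: $f$ continuous and $v$ lower semi-continuous give $\lim_n v(\pi_n) = v(\pi_0)$, $g\in LSC(E)$ gives $g(\pi_0)\le\liminf_n g(\pi_n)$, and one concludes $v(\pi_0) - \lambda g(\pi_0) - h^\ddagger(\pi_0)\ge 0$.

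\textbf{Main obstacle.} The only genuinely non-routine point is justifying that the continuity of $f$ and the semicontinuity of $u$ (resp.\ $v$) combine with the optimality $u(\pi_0)-f(\pi_0)=\sup(u-f)$ to force the \emph{full} convergence $u(\pi_n)\to u(\pi_0)$ rather than merely $\limsup u(\pi_n)\le u(\pi_0)$; this is where the maximizing property of $\pi_0$ is used in an essential way, and it is the step that makes the semicontinuity hypotheses on $u$ point in the ``right'' direction to also control $g$. Everything else is bookkeeping with $\limsup$/$\liminf$ and the stated regularity of the ingredients $f\in C(E)$, $g\in USC(E)$ (resp. $LSC(E)$), $h^\dagger,h^\ddagger \in C_b(E)$.
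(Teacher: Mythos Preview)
Your proposal is correct and matches the paper's own proof essentially line by line: both first establish $\lim_n u(\pi_n)=u(\pi_0)$ from the upper semicontinuity of $u$, the continuity of $f$, and the optimality of $\pi_0$, and then combine $g\in USC(E)$, $h^\dagger\in C_b(E)$ and the subsolution inequality to conclude; the supersolution case is handled symmetrically. The only cosmetic difference is how the final $\limsup$ chain is written.
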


	\begin{proof}
	We prove the statement for the subsolution case, the supersolution case works analogously.
	
	Let  $u$ be a subsolution to $f - \lambda  {A}_\dagger f = h^\dagger$, $(f,g) \in A_\dagger$ and $(\pi_n)_{n\in \mathbb N}\in E$ be as in the assumption of this lemma. Then in particular we have 
	\begin{gather*}
	    \limsup_n u(\pi_n) - f(\pi_n) = \sup_\pi u(\pi) - f(\pi), \\
	    \limsup_n u(\pi_n) - \lambda g(\pi_n) - h^\dagger(\pi_n) \leq 0.
	\end{gather*}
    By assumption, there exists  $\pi_0\in E$ such that $u(\pi_0) - f(\pi_0) = \sup_\pi u(\pi) - f(\pi)$ and  $\pi_n \rightarrow \pi_0$.
	    
	 Being $u$  upper semi-continuous, we have $\limsup_n u(\pi_n) \leq u(\pi_0)$. On the other hand, being {$\lim_n u(\pi_n) - f(\pi_n) = u(\pi_0) - f(\pi_0)$}, we have
	    \begin{align*}
	        \liminf_n u(\pi_n) & = \liminf_n (u(\pi_n) - f(\pi_n) + f(\pi_n)) \\
	        & \geq u(\pi_0) - f(\pi_0) + \liminf_n f(\pi_n) \\
	        & \geq u(\pi_0) - f(\pi_0) + f(\pi_0) = u(\pi_0)
	    \end{align*}
	    due to the fact that  $f$ is {continuous}. We can then conclude that $\lim_n u(\pi_n) = u(\pi_0)$. On the other hand, being  $h^\dagger$  continuous and $g$ is upper semi-continuous, we find
	    \begin{align*}
	        0 & \geq \limsup_n (u(\pi_n) - \lambda g(\pi_n) - h^\dagger(\pi_n)) \\
	        & = u(\pi_0) - h^\dagger(\pi_0) + \limsup_n - \lambda g(\pi_n) \\
	        & \geq u(\pi_0) - h^\dagger(\pi_0) + \liminf_n - \lambda g(\pi_n) \\
	        & = u(\pi_0) - h^\dagger(\pi_0) - \lambda \limsup_n g(\pi_n) \\
	        & \geq u(\pi_0) - h^\dagger(\pi_0) - \lambda g(\pi_0).
	    \end{align*}

	\end{proof}

\subsection{A variant of the triangle inequality for the quadratic distance}

For the proof of Proposition \ref{prop:quadruplication_Ekeland}, we need the following combination of the triangle and Jensen inequality.

		\begin{lemma} \label{lemma:Jensen_on_distance}
		Let $\nu_1,\nu_2,\nu_3,\nu_4 \in E$ and  $\varepsilon, \varepsilon' \in (0,1/3)$, then
	\begin{equation}\label{eq:Jensen_1}
		\frac{1}{6} \frac{1}{1-\varepsilon'} \frac{1}{2} d^2(\nu_1,\nu_4) \leq \frac{1}{1-\varepsilon} \frac{1}{2}d^2(\nu_1,\nu_2) + \frac{1}{2}d^2(\nu_2,\nu_3)+ \frac{1}{1+\varepsilon} {\frac{1}{2}}d^2(\nu_3,\nu_4)
	\end{equation}
	\end{lemma}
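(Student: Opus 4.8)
The plan is to combine a straightforward triangle-inequality estimate on the squared distance with a Cauchy--Schwarz / Jensen-type bound and then absorb the small $\varepsilon$-factors. First I would use the triangle inequality $d(\nu_1,\nu_4)\le d(\nu_1,\nu_2)+d(\nu_2,\nu_3)+d(\nu_3,\nu_4)$, so that
\[
d^2(\nu_1,\nu_4)\le \bigl(d(\nu_1,\nu_2)+d(\nu_2,\nu_3)+d(\nu_3,\nu_4)\bigr)^2 .
\]
Then, by the elementary inequality $(a+b+c)^2\le 3(a^2+b^2+c^2)$ (i.e. Jensen / Cauchy--Schwarz applied to three terms with equal weights $1/3$), I get $d^2(\nu_1,\nu_4)\le 3\bigl(d^2(\nu_1,\nu_2)+d^2(\nu_2,\nu_3)+d^2(\nu_3,\nu_4)\bigr)$.

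The next step is bookkeeping of the $\varepsilon,\varepsilon'$ weights. Since $\varepsilon'\in(0,1/3)$ we have $\tfrac{1}{1-\varepsilon'}\le \tfrac{3}{2}$, so $\tfrac16\cdot\tfrac{1}{1-\varepsilon'}\le \tfrac14$; hence the left-hand side of \eqref{eq:Jensen_1} is at most $\tfrac14\cdot\tfrac12 d^2(\nu_1,\nu_4)$, and by the previous paragraph this is at most $\tfrac{3}{8}\bigl(d^2(\nu_1,\nu_2)+d^2(\nu_2,\nu_3)+d^2(\nu_3,\nu_4)\bigr)$. On the right-hand side, since $\tfrac{1}{1-\varepsilon}\ge 1$ and $\tfrac{1}{1+\varepsilon}\ge \tfrac34$ (again because $\varepsilon<1/3$), each of the three terms is at least $\tfrac34\cdot\tfrac12 d^2(\nu_i,\nu_{i+1})$, so the right-hand side is bounded below by $\tfrac38\bigl(d^2(\nu_1,\nu_2)+d^2(\nu_2,\nu_3)+d^2(\nu_3,\nu_4)\bigr)$. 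Comparing the two bounds yields \eqref{eq:Jensen_1}.

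There is essentially no obstacle here: it is a purely elementary estimate, and the only mild care needed is to make sure the constants line up, i.e. that $\tfrac16\cdot\tfrac{1}{1-\varepsilon'}\le \tfrac34\cdot\min\!\bigl(\tfrac{1}{1-\varepsilon},1,\tfrac{1}{1+\varepsilon}\bigr)\cdot\tfrac13\cdot 3$ — which reduces to $\tfrac14\le \tfrac34$, comfortably true. In writing this up I would simply present the chain of inequalities
\[
\tfrac16\tfrac{1}{1-\varepsilon'}\tfrac12 d^2(\nu_1,\nu_4)
\le \tfrac14\tfrac12 d^2(\nu_1,\nu_4)
\le \tfrac38\sum d^2(\nu_i,\nu_{i+1})
\le \tfrac{1}{1-\varepsilon}\tfrac12 d^2(\nu_1,\nu_2)+\tfrac12 d^2(\nu_2,\nu_3)+\tfrac{1}{1+\varepsilon}\tfrac12 d^2(\nu_3,\nu_4),
\]
citing the triangle inequality and $(a+b+c)^2\le 3(a^2+b^2+c^2)$ for the middle step and the bounds on the $\varepsilon$-factors for the outer ones.
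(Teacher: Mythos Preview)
Your proof is correct and follows essentially the same approach as the paper: triangle inequality, then the Jensen/Cauchy--Schwarz bound $(a+b+c)^2\le 3(a^2+b^2+c^2)$, followed by elementary bounds on the $\varepsilon,\varepsilon'$ coefficients using $\varepsilon,\varepsilon'\in(0,1/3)$. The only cosmetic difference is in the bookkeeping of the last step: the paper multiplies the core inequality $\tfrac{1}{6}d^2(\nu_1,\nu_4)\le \tfrac12\sum d^2(\nu_i,\nu_{i+1})$ through by $\tfrac{1}{2(1-\varepsilon')}$ and then bounds each resulting coefficient individually, whereas you sandwich both sides by $\tfrac38\sum d^2(\nu_i,\nu_{i+1})$; both are equally valid.
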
    
	
	\begin{proof}
		By the triangle inequality, we have
		\begin{equation*}
			d(\nu_1,\nu_4) \leq d(\nu_1,\nu_2) + d(\nu_2,\nu_3) + d(\nu_3,\nu_4)
		\end{equation*}
		so that by Jensens inequality, we have
		\begin{align*}
			\frac{1}{6} d^2(\nu_1,\nu_4) & \leq \frac{1}{3} \frac{1}{2} \left(d(\nu_1,\nu_2) + d(\nu_2,\nu_3) + d(\nu_3,\nu_4)\right)^2 \\
			& = 3\frac{1}{2}  \left(\frac{1}{3} d(\nu_1,\nu_2) + \frac{1}{3} d(\nu_2,\nu_3) + \frac{1}{3} d(\nu_3,\nu_4)\right)^2 \\
			& \leq \frac{3}{2} \left(\frac{1}{3}d^2(\nu_1,\nu_2) + \frac{1}{3}d^2(\nu_2,\nu_3) + \frac{1}{3}d^2(\nu_3,\nu_4) \right).
		\end{align*}
		 The second claim follows from this inequality, using that for $\varepsilon, \varepsilon' \in (0,1/3)$ 
		\begin{equation*}
			{\frac{1-\varepsilon}{1-\varepsilon'} \leq2},\qquad \frac{1}{2(1-\varepsilon')} \leq 1, \qquad  1 \leq \frac{1}{1 - \varepsilon}, \qquad  {\frac{1+\varepsilon}{1-\varepsilon'} \leq2}. 
		\end{equation*} 
	\end{proof}

\printbibliography

\end{document}